\tikzstyle{V}=[fill=black,circle,scale=0.4, outer sep = 4pt]
\numberwithin{equation}{section}
\newtheorem{theorem}{Theorem}[section]
\newtheorem{definition}[theorem]{Definition}
\theoremstyle{definition}
\newtheorem{example}[theorem]{Example}
\newtheorem{lemma}[theorem]{Lemma}
\theoremstyle{lemma}
\newtheorem{corollary}[theorem]{Corollary}
\theoremstyle{corollary}
\newtheorem{remark}[theorem]{Remark}
\newtheorem{proposition}[theorem]{Proposition}
\theoremstyle{proposition}
\newcommand{\be}{\begin{enumerate}}
\newcommand{\ee}{\end{enumerate}}
\newcommand{\beq}{\begin{equation}}
\newcommand{\eeq}{\end{equation}}
\def\N{{\mathbb{N}}}
\def\Z{{\mathbb{Z}}}
\def\bF{{\mathbb F}}
\def\bZ{{\mathbb{Z}}}
\def\C{{\mathcal{C}}}
\newcommand{\RT}{\rT} 
\DeclareMathOperator{\rT}{T}
\title{Higher dimensional digraphs from cube complexes and their spectral theory}
\author[Nadia S. Larsen]{Nadia S. Larsen}
\address[Nadia S. Larsen]{Department of Mathematics\\
University of Oslo\\
PO BOX 1053 Blindern\\
N-0316 Oslo\\
Norway}%
\email{nadiasl@math.uio.no}
\author[Alina Vdovina]{Alina Vdovina}
\address[Alina Vdovina]{Department of Mathematics\\
City College of New York\\
CUNY\\
160 Convent avenue, New York, NY, 10031 \\
USA
}
\email{avdovina@ccny.cuny.edu}
\date{3 November 2022}
\begin{document}

\maketitle

\begin{abstract}
We define $k$-dimensional digraphs and initiate a study of their spectral theory. The $k$-dimensional digraphs can be viewed as generating graphs for small categories called  $k$-graphs. Guided by geometric insight, we obtain several new series of $k$-graphs using cube complexes covered by Cartesian products of trees, for $k \geq 2$. These $k$-graphs can not be presented as virtual products, and constitute novel models of such small categories. The constructions yield rank-$k$ Cuntz-Krieger algebras for all $k\geq 2$. We introduce Ramanujan $k$-graphs satisfying optimal spectral gap property, and show explicitly how to construct the underlying $k$-digraphs.

\end{abstract}

\section{Introduction}
The study of higher rank graphs and their $C^*$-algebras originates in work of Robertson and Steger in \cite{RS} and expanded into a very active direction of research in operator algebras following the work of Kumjian and Pask \cite{KP}, where the term $k$-graph was formalised.  A higher rank graph, or $k$-graph, with $k\geq 1$, is a small category with a functor into the monoid $\N^k$ that enjoys a unique factorisation property. While many important structural results about higher rank graph $C^*$-algebras have been obtained, the supply of examples of $k$-graphs for $k\geq 3$ is limited. The main contribution of the present paper is to provide new, infinite series of examples of higher rank graphs. Our main technical innovation is the concept of a $k$-dimensional digraph, where $k\geq 2$. Using input from geometric group theory, we obtain several infinite series of explicit constructions of $k$-dimensional digraphs for $k\geq 3$, and from these we obtain novel examples of higher rank graphs with rank at least $3$ which are not of product type, in particular are not skew-products, as will be explained.

A $k$-dimensional digraph is a generalisations of a directed graph, or digraph. It is known that $1$-graphs  are free categories defined by directed graphs, see e.g. \cite{KP} or \cite[Proposition 3.12]{LSV}. Our definition of $k$-dimensional digraph is made so that the natural category associated to  it will be a $k$-graph. This definition is as follows:

\begin{definition}\label{def:k digraph from coloured graph}
Let $k\geq 2$ be a positive integer. A {$k$-dimensional digraph} $\operatorname{DG}=(V,E,o,t,k,\phi)$ is a directed graph with $V$ a finite set of vertices, $E$ finite set of edges, maps $o,t:E\to V$ which determine the origin and terminus of each $x\in E$,  and with the property that the edge set decomposes as a disjoint union $E=E_1\sqcup E_2\sqcup \dots \sqcup E_k$ with $E_i$ for $i=1,\dots, k$ regarded as edges of colour $i$, such that there is a bijection $\phi:Y\to Y$ on the set $Y:=Y_{\operatorname{DG}}$ of all directed paths of length two formed of edges of colours given by ordered pairs $(i,j)$ with $i \neq j$ in $\{1,2,\dots,k\}$, satisfying the following properties:

 \begin{enumerate}
 \item[(F1)] If $xy$ is a path of length two with $x$ of colour $i$ and $y$ of colour $j$, then $\phi (xy)=y^{\prime}x^{\prime}$ for a unique pair $(y',x')$ where $y^{\prime}$ has colour $j$, $x^{\prime}$
 has colour $i$ and the origin and terminus vertices of the paths $xy$ and
 $y^{\prime}x^{\prime}$ coincide. We write this as $xy \sim y^{\prime}x^{\prime}$ and note that $\phi^2=1$.

 \item[(F2)] For all $x\in E_i$, $y\in E_j$ and $z\in E_l$ so that $xyz$ is a path on $E$,  where $i,j,l$ are distinct colours, if  $x_1,x_2,x^2\in E_i$, $y_1,y_2,y^2\in E_j$ and $z_1,z_2,z^2\in E_l$
satisfy
\[
xy \sim y^1x^1, x^1z \sim z^1x^2,  y^1z^1 \sim z^2y^2
\]
and \[
yz \sim z_1y_1, xz_1 \sim z_2x_1, x_1y_1 \sim y_2x_2,
\] it follows that $x_2 =x^2, y_2 =y^2$ and $z_2 =z^2$.

\end{enumerate}
\end{definition}

One of the main motivations for this definition is to have a purely combinatorial finite set of data to deal with as input to defining a $C^*$-algebra. 
The definition of a $k$-graph in \cite{KP} involves an infinite category. There are combinatorial conditions describing $k$-coloured graphs with complete and associative set of squares in \cite[p. 577]{HRSW} leading to a $k$-graph,  but these too are formulated in terms of category theory and the $k$-coloured graph consists of infinite data. While the complete and associative collection of squares contains the same combinatorial information as our $k$-dimensional digraph, the advantage of our concept of $k$-dimensional digraph is two-fold, as 
it involves only a finite number of conditions to be checked without reference to graph morphisms in an infinite category, and because it allows for providing a rich supply of examples from $k$-cube complexes via geometric group theory.

If we seek for an analogy from topology, we may talk about a CW-complex having a simply-connected universal cover, so our  $k$-dimensional digraph is an analogue of the complex and the $k$-graph in the existing literature is an analogue of the universal cover.

The constructions in \cite{RS} provided generalisation of the Cuntz-Krieger algebras from topological Markov shifts introduced in \cite{CK}. They were motivated by an observation of Spielberg \cite{Spi} clarifying that a free group $\Gamma$ on finitely many generators, viewed as the fundamental group of a finite connected graph, acts on the boundary $\Omega$ of its Cayley graph such that in the associated crossed product $C(\Omega)\rtimes \Gamma$ one finds generating partial isometries  for an ordinary Cuntz-Krieger algebra $\mathcal{A}$ associated to a matrix $M$ that records incidence in the universal covering graph. This provided the basis for defining a \emph{higher rank Cuntz-Krieger} algebra $\mathcal{A}$ in \cite{RS}: the  input consists of a finite alphabet and a family of commuting $(0,1)$-matrices $M_1,M_2,\dots,M_k$, with $k\geq 1$,  having entries in the alphabet,  satisfying a number of conditions, and controlling the formation of words of $\N^k$-valued shape. Shortly after \cite{RS}, Kumjian and Pask \cite{KP} defined a  $k$-graph $\Lambda$ as a small category with a $\N^k$-valued degree of morphisms modelling the formation of words from \cite{RS} and with an associated  $C^*$-algebra $C^*(\Lambda)$  generated by partial isometries subject to Cuntz-Krieger type relations, recovering \cite{RS}.

  The explicit examples in \cite{RS} feature $k=2$.  There, the foundation for the construction of the $C^*$-algebra is a group action on the boundary of an affine $\tilde{A}_2$ building, from which one extracts a suitable alphabet and defines two commuting matrices with the properties (H0)-(H3) specified in \cite{RS}, see also \cite{KR}.  Now,  even though $k$-graphs were defined some time ago, not many explicit examples are known for $k \geq 3$, see though \cite{MRV}.

Here we construct infinite series of examples of $k$-graphs, with $k\geq 3$, from groups acting freely and  transitively on products of $k$ regular trees of constant valencies, as were explicitly found by Rungtanapirom, Stix and Vdovina in \cite{RSV}. One of our key tools is Theorem~\ref{thm:the category associated to a digraph} establishing that there is a $k$-graph, unique in the appropriate sense, associated to every $k$-dimensional digraph.

\begin{theorem}\label{thm:0}(Cf. Theorem~\ref{thm:the category associated to a digraph})
Suppose that  $\operatorname{DG}=(V,E, o,t,k,\phi)$ is a $k$-dimensional digraph, $k\geq 2$. Then there exist a small category $\Lambda_{\operatorname{DG}}$ with $V$ as set of objects and a functor $d:\Lambda_{\operatorname{DG}}\to \N^k$ which assigns value (degree) $e_i$ to morphisms determined by edges in $E_i$, for all $1\leq i\leq k$. Moreover, the functor $d$ has the unique factorisation property and in particular, $(\Lambda_{\operatorname{DG}},d)$ is a higher rank graph.
 \end{theorem}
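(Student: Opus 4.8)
The plan is to build $\Lambda_{\operatorname{DG}}$ explicitly as a category of "colour-reduced words" and then verify the Kumjian–Pask axioms, using (F1) to get a well-defined composition and (F2) to get associativity of the square-commutation, which in turn gives unique factorisation. Concretely, I would first let $W$ be the set of all finite directed paths in the underlying graph $(V,E,o,t)$, together with the trivial paths at each vertex, and define an equivalence relation $\sim$ on $W$ generated by the elementary moves $xy \leftrightarrow y'x'$ from (F1), applied to any two consecutive edges of distinct colours inside a longer path. Composition of paths (concatenation when the terminus of the first equals the origin of the second) descends to $\sim$-classes because the moves are local; this gives a category $\Lambda_{\operatorname{DG}}$ with object set $V$. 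The degree functor $d\colon \Lambda_{\operatorname{DG}}\to\N^k$ is defined on a path by counting, for each colour $i$, how many edges of colour $i$ it contains; this is visibly invariant under the moves in (F1) (which swap an $(i,j)$ pair for a $(j,i)$ pair), so $d$ is well-defined, and it is clearly a functor.

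The substance is the unique factorisation property: given $\lambda\in\Lambda_{\operatorname{DG}}$ with $d(\lambda)=m+n$, there are unique $\mu,\nu$ with $d(\mu)=m$, $d(\nu)=n$ and $\lambda=\mu\nu$. The standard strategy (following the "complete and associative collection of squares" picture of \cite{HRSW}) is to show first that every $\sim$-class has a distinguished representative in a fixed normal form — e.g. lexicographically least with respect to the colour order, obtained by repeatedly pushing lower-coloured edges to the left via (F1) — and that the rewriting system given by the moves $xy\to y'x'$ (oriented to decrease colour-lex order) is confluent and terminating. Termination is immediate since each oriented move strictly decreases a finite lexicographic statistic; local confluence is exactly where (F2) enters: the two ways of reordering a length-three path $xyz$ with three distinct colours must agree, and (F2) says precisely that the two composite relations $(xy\sim y^1x^1$, $x^1z\sim z^1x^2$, $y^1z^1\sim z^2y^2)$ and $(yz\sim z_1y_1$, $xz_1\sim z_2x_1$, $x_1y_1\sim y_2x_2)$ produce the same terminal word $z_2y_2x_2 = z^2y^2x^2$. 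By Newman's lemma, termination plus local confluence gives global confluence, hence unique normal forms; reading off the first $|m|$ edges (in the appropriate sense, after bringing $\lambda$ into the order dictated by $m$ then $n$) of the normal form yields the required factorisation, and confluence gives its uniqueness.

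The main obstacle I anticipate is the confluence analysis: one must check that (F2) as stated — a hexagon condition on a single length-three path using all three colours — genuinely suffices to resolve every local ambiguity in the rewriting system, including ambiguities involving repeated colours (two consecutive moves that overlap in one edge but involve only two colours, or non-overlapping moves, which commute trivially). The repeated-colour and non-overlapping cases should be routine, so the real content is the three-distinct-colour hexagon, which is (F2) by design; nevertheless, writing this out carefully, and being precise about how a local move inside a long path interacts with the rest of the path, is the delicate part. Once confluence is in hand, checking the remaining $k$-graph axioms (the functor $d$ restricted to degree-zero morphisms being the identities, and the factorisation property packaged in the form of \cite{KP}) is formal. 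Uniqueness of $\Lambda_{\operatorname{DG}}$ in the appropriate sense follows because any $k$-graph realising the same degree-one and degree-($e_i+e_j$) data must, by unique factorisation, have its morphisms in bijection with $\sim$-classes of paths compatibly with composition.
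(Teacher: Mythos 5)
Your construction of the category and of the degree functor is exactly the paper's: the free category on $(V,E,o,t)$ modulo the congruence generated by the (F1) moves, with $d$ counting edges of each colour, well defined because each move trades an $(i,j)$ pair for a $(j,i)$ pair. Where you differ is in how the factorisation property is extracted. The paper proceeds by hand: it shows every class has a ``rainbow'' representative with colours grouped in a fixed decreasing order (using (F1) for two colours and (F2) for the tricoloured hexagon, then induction), and then proves existence and uniqueness of factorisations by an explicit case analysis on the number of colours present. You instead orient the moves to sort the colour word, observe termination, and invoke Newman's lemma, with (F2) resolving the unique genuinely overlapping critical pair --- and your observation that an overlap of two redexes forces three distinct colours (since each redex is a colour descent) is correct, so (F2) really is the only nontrivial confluence check. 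This packaging is a legitimate and arguably more transparent route to the same combinatorial core. Two points deserve care. First, ``confluence gives uniqueness'' of the factorisation is slightly too quick: Newman's lemma gives a unique \emph{irreducible} representative, whereas unique factorisation needs a unique representative for \emph{every} prescribed colour word (e.g.\ the shuffle dictated by $n$ then $m$). This does follow, but only after you add that each move is a bijection on paths (because $\phi$ is a bijection with $\phi^2=1$), so that two equivalent paths with the same colour word are equal by induction on the number of inversions down to the normal form; the paper's case analysis absorbs this silently as well. Second, existence of the factorisation requires un-sorting the normal form into the target shuffle, which again uses the invertibility of the moves; you flag this parenthetically, and it should be made explicit. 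Neither point is a gap in the idea, only in the write-up.
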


 The other main tool shows that every $k$-cube complex gives rise to a $k$-dimensional digraph, as follows:

\begin{theorem}\label{thm:A} (Cf. Theorem~\ref{thm:one vertex k-digraph from k-cube})
For any  complex $\mathcal{X}$ with universal cover equal to the product of $k$ regular trees, where $k\geq 2$,  there is a  $k$-dimensional digraph $\operatorname{DG}(\mathcal{X})$ defined by sending a vertex of $\mathcal{X}$ to a vertex in $\operatorname{DG}(\mathcal{X})$,  and by sending  each geometric edge in $\mathcal{X}$ to two edges in the edge set of $\operatorname{DG}(\mathcal{X})$, of same colour and opposite orientations.
\end{theorem}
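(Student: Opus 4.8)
The plan is to read the digraph off directly from the cube-complex structure of $\mathcal{X}$, using the product decomposition of the universal cover to supply the colouring and the square/cube combinatorics. Write $p\colon\widetilde{\mathcal{X}}=T_1\times\cdots\times T_k\to\mathcal{X}$ for the universal covering and $\Gamma=\pi_1(\mathcal{X})$, which acts freely and cellularly and (as in the examples of \cite{RSV}, and as we assume throughout) respects the product decomposition. Every $1$-cell of $\widetilde{\mathcal{X}}$ is the product of a $1$-cell in exactly one factor $T_i$ with vertices in the remaining factors, which assigns a colour $i\in\{1,\dots,k\}$ to each $1$-cell of $\widetilde{\mathcal{X}}$; since $\Gamma$ preserves the decomposition, this colouring descends to the $1$-cells of $\mathcal{X}$. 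Now set $V:=\mathcal{X}^{(0)}$, let $E_i$ consist of the two orientations of each geometric edge of colour $i$, put $E=E_1\sqcup\cdots\sqcup E_k$, and let $o,t\colon E\to V$ be the endpoint maps. As $\mathcal{X}$ is finite, $V$ and $E$ are finite, and the asserted vertex/edge correspondence holds by construction.

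Next I would define $\phi$ from the squares. Given a bicoloured path $xy$ of colours $(i,j)$ with $i\neq j$, lift it through a chosen lift of $o(x)$ to a path $\widetilde{x}\widetilde{y}$ in $\widetilde{\mathcal{X}}$. Since $\widetilde{\mathcal{X}}$ is a product, $\widetilde{x}$ lies in the $i$-factor and $\widetilde{y}$ in the $j$-factor, and there is a unique $2$-cube $\widetilde{c}$ of $\widetilde{\mathcal{X}}$ having $\widetilde{x}\widetilde{y}$ as one boundary path; its complementary boundary path from $o(\widetilde{x})$ to $t(\widetilde{y})$ has the form $\widetilde{y}'\widetilde{x}'$ with $\widetilde{y}'$ of colour $j$, $\widetilde{x}'$ of colour $i$. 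Put $\phi(xy):=p(\widetilde{y}')p(\widetilde{x}')$. This is independent of the lift, because $\Gamma$ permutes lifts by colour-preserving cube-complex automorphisms and so carries $\widetilde{c}$ to the corresponding square over another lift; and (F1) follows at once, since $\phi(xy)$ has the prescribed colours and endpoints and applying the construction to $y'x'$ returns the square $\widetilde{c}$, whence $\phi^2=1$ and $\phi$ is a bijection of $Y$. (Existence and uniqueness of $\widetilde{c}$ --- equivalently, the fact that the link of each vertex of $\widetilde{\mathcal{X}}$ is the join of the $k$ discrete links of the trees --- is exactly the completeness of squares that the product hypothesis buys us.)

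Finally I would verify (F2), which is vacuous for $k=2$ and for $k\geq 3$ amounts to the closing-up of $3$-cubes. Given a tricoloured path $xyz$ with distinct colours $i,j,l$, lift it to $\widetilde{x}\widetilde{y}\widetilde{z}$; the three edges lie in three distinct factors and span a unique $3$-cube $\widetilde{C}$ of $\widetilde{\mathcal{X}}$ (a product of a $1$-cell in each of $T_i,T_j,T_l$ with the other coordinates fixed at vertices). Under the construction of $\phi$ the six square-relations occurring in (F2) correspond bijectively to the six $2$-faces of $\widetilde{C}$, and the left-hand chain $xy\sim y^1x^1,\ x^1z\sim z^1x^2,\ y^1z^1\sim z^2y^2$ and the right-hand chain $yz\sim z_1y_1,\ xz_1\sim z_2x_1,\ x_1y_1\sim y_2x_2$ are the two maximal sequences of adjacent-letter swaps realising the longest-word braid relation $s_1s_2s_1=s_2s_1s_2$ in $S_3$, which reverses $(i,j,l)$. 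Performing either sequence of the corresponding face swaps inside the fixed cube $\widetilde{C}$ leads to the unique monotone edge-path of $\widetilde{C}$ from $o(x)$ to the opposite corner $t(z)$ with colour sequence $(l,j,i)$; as that path is determined by $\widetilde{C}$, the two chains end at the same triple of edges, i.e.\ $\widetilde{x}_2=\widetilde{x}^2$, $\widetilde{y}_2=\widetilde{y}^2$, $\widetilde{z}_2=\widetilde{z}^2$ in $\widetilde{\mathcal{X}}$. Applying $p$ gives $x_2=x^2$, $y_2=y^2$, $z_2=z^2$, so $\operatorname{DG}(\mathcal{X})=(V,E,o,t,k,\phi)$ satisfies Definition~\ref{def:k digraph from coloured graph}.

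The step I expect to be the genuine obstacle is the bookkeeping in (F2): one must match the twelve edge-symbols in the two chains with the twelve edges of $\widetilde{C}$ and check that the two prescribed orders of face swaps really trace out the two sides of the $S_3$-permutohedron --- rather than, say, returning to the start --- so that both terminate at the $(l,j,i)$-path. A secondary point needing care is the standing hypothesis that $\Gamma$ respects the product decomposition: it is what allows the colouring and the maps $\phi$ to descend from $\widetilde{\mathcal{X}}$ to $\mathcal{X}$, and without it the colouring is defined only up to a permutation of mutually isomorphic tree factors, so one should impose it or else work in the one-vertex setting of Theorem~\ref{thm:one vertex k-digraph from k-cube}, where it is automatic.
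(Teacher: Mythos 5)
Your proposal is correct, and it reaches the same two crux facts as the paper's proof --- every bicoloured path lies in a unique square, every tricoloured path lies in a unique $3$-cube, and (F2) holds because both chains of swaps compute the unique opposite monotone path in that cube --- but it gets there by a different route. The paper works entirely downstairs in $\mathcal{X}$: in the one-vertex case it extracts the squares from the group relations $ab=b'a'$ of the VH-structures of the pairs $(A_i,A_j)$ (Lemma~\ref{prop:2-graph from VH structure}), invokes condition (5) of Definition~\ref{defi:cubestructure} to produce the unique geometric $3$-cube containing the three squares $S_1^{ij},S_2^{il},S_3^{jl}$, and then argues that $a_r^ia_s^ja_t^l$ and its reversal join vertices of the cube at longest distance, so the opposite path is unique; the $N$-vertex case is handled by the same cell-level argument without group labels. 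You instead lift everything to $\widetilde{\mathcal{X}}=T_1\times\cdots\times T_k$, where the relevant $2$- and $3$-cubes are literal products of tree edges and the uniqueness statements are immediate, and then push $\phi$ down via equivariance. Your version buys a uniform treatment of the one-vertex and many-vertex cases, makes the "unique opposite path" argument transparent (in the quotient the eight corners of a geometric cube may all be identified, so the paper must interpret "distance in the cube" inside the cell rather than in the complex), and correctly isolates the hypothesis --- that $\pi_1(\mathcal{X})$ does not permute the tree factors --- needed for the colouring to descend, a point the paper leaves implicit. What the paper's route buys is the explicit group-theoretic labelling $a_rb_s=b_{m(r,s)}a_{l(r,s)}$ of $\phi$, which is what all of its later concrete examples and the rigidity arguments (Lemma~\ref{lem:rigid}) actually use.
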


The $k$-graphs resulting from our constructions are not skew-products of a $k$-graph by some finite group, because such a setup would involve a fundamental group with torsion elements, while all fundamental groups of complexes in our construction are infinite torsion-free groups, since they are fundamental groups of CAT(0) spaces.

The examples of $k$-graphs we obtain by Corollary~\ref{cor:the one vertex k-graph associated to a digraph} and Theorem~\ref{thm:N-vertex k graph} differ from, for example, the $k$-graphs constructed  in \cite{KR}, \cite{R}, \cite{RS}, \cite{RS2}, since these papers only considered the case $k=2$. Moreover, our examples are new for $k=2$, see Remark~\ref{rem:our 2 graph example2.37 versus KR}. Our construction also differs from the more recent \cite{M} when $k=2$ and \cite{MRV} for $k\geq 3$, both of which emulate \cite{RS} and \cite{KR}.  The core idea  in these references is as follows: Given a  cell complex $\mathcal{X}$, each $k$-dimensional cell in $\mathcal{X}$ becomes a  vertex in a $k$-graph $\Lambda(\mathcal{X})$, and for two such cells  there is an edge in $\Lambda(\mathcal{X})$ if the given cells are adjacent via a $(k-1)$-cell, for $k\geq 2$. In this construction one may take pointed cells as vertices, or unpointed. Another  way to distinguish our higher rank graphs from the ones in \cite{RS}
is to compute the products of coordinate matrices. The products of coordinate matrices in \cite{RS} have to be $(0,1)$-matrices, but this is not necessarily the case in our construction (see Example~\ref{ex:Ramanujan from order 25}).

To place our definition in the context of similar developments, we recall that a recipe for constructing $2$-graphs was proposed in \cite[Section 6]{KP}, starting from two distinct directed graphs on the same set of vertices with commuting vertex matrices. A step forward was taken in \cite{FS}, see their Remark 2.3, where a certain associativity type condition was identified as sufficient. In \cite{HRSW}, the authors distilled these earlier attempts at constructing higher rank graphs and landed on a prescription requiring a skeleton, or a $k$-coloured  graph (where the colouring refers to edges and employs $k$ distinct colours) and a collection of building blocks termed squares  that satisfy compatibility requirements, see \cite[Theorem 4.4]{HRSW}. The squares here are certain coloured-graph morphisms.

A further simplification of the prescription of a $k$-graph from its skeleton, seen as a $k$-coloured graph, has been employed in concrete examples such as \cite[Example 7.7 ]{LLNSW} and \cite[Section 8.2]{LSV}. In fact, this last example articulates the requirements on the coloured graph that inspired our conditions (F1) and (F2) in Definition~\ref{def:k digraph from coloured graph}.
It is interesting to note that the validity of the associativity condition (F2), for $k\geq 3$, is a priori highly nontrivial. There are connections to finding solutions of the Yang-Baxter equation, see for example \cite{Y} and \cite{vdovina-YB}.

There is a strong connection between geometry of CW-complexes, groups and semigroup actions, higher rank graphs and the theory of $C^*$-algebras. The difficulty is that there are many ways to associate $C^*$-algebras to groups, semigroups and CW-complexes, and this can lead to both isomorphic and non-isomorphic $C^*$-algebras. For the higher rank graphs, there is a canonical way to associate a $C^*$-algebra, cf \cite{KP}, but it happens that non-isomorphic $k$-rank graphs lead to the same $C^*$-algebra. This conclusion is often achieved through computation of K-theory and applications of the powerful Kirchberg-Phillips classification machinery for purely infinite simple unital nuclear $C^*$-algebras.

One important question is what is a genuine higher rank? Meaning that our $k$-rank graph can not be obtained by some standard procedure from graphs of smaller ranks.
We address this question by introducing the spectral theory of combinatorial higher rank graphs.
So far the spectrum of strongly connected higher rank graphs was considered in \cite{AHLRS2} and  \cite{LLNSW},  through Perron-Frobenius theory,  which lead to new explicit constructions of von Neumann factors. We generalise results of \cite{LLNSW} by constructing infinite series of III$_{\lambda}$ factors for any $k$, and infinitely many values of $\lambda$.

We want to stress the following simple but important point about our construction of $k$-graphs: Recall that for an undirected graph with (vertex) adjacency matrix $A$, we have $A(v,w)=1=A(w,v)$ if vertices $v,w$ are connected by an edge. Thus the adjacency matrix is \emph{symmetric} and the eigenvalues are real. In a $k$-graph, we have directed edges in the various colours in its $1$-skeleton. There is no reason why the adjacency matrix should be symmetric. What can be said in general about a $k$-graph is that, if it is strongly connected, then its associated coordinate matrices jointly admit a unimodular Perron-Frobenius eigenvector, \cite{AHLRS2}. Now, in our construction of the $k$-graph $\Lambda(P)$ from a $k$-cube complex $P$, the procedure is such that it assigns to each undirected edge in the 1-skeleton of $P$ a pair of morphisms (arrows) with opposite orientation in $\Lambda(P)$. As a consequence, the adjacency matrix for the complex in direction $i$ is the same as the coordinate matrix $M_i$ of $\Lambda(P)$ in colour $i$, for all $i=1,\dots, k$. Thus all our constructions of $k$-graphs have symmetric matrices.

With this in mind, we suggest a new class of higher rank graphs, which we call Ramanujan $k$-rank graphs, see Section~\ref{sec:spectral theory}.
Their coordinate matrices are symmetric, so all eigenvalues are real and it makes sense to consider the spectral gap. We show that our $k$-graphs satisfy the optimal spectral gap condition,  which distinguishes them from the examples that have appeared in the literature so far.

The structure of the paper is the following: In a preliminary section ~\ref{sec:prelim} we collect conventions and results about categories, groups acting on products of trees, $k$-cube complexes, in particular one-vertex $k$-cube complexes from $k$-cube groups,  $k$-graphs and their $C^*$-algebras. Section~\ref{sec:one vertex k graphs}
starts with one of our main results, Theorem~\ref{thm:the category associated to a digraph}, which prescribes the construction of a $k$-graph from a given $k$-dimensional digraph. We then  associate a  $k$-dimensional digraph to any complex covered by a product of $k$ trees, see  Theorem~\ref{thm:one vertex k-digraph from k-cube}.  The ensuing Corollary~\ref{cor:the one vertex k-graph associated to a digraph} describes the  new family of $k$-graphs from $k$-cube complexes. In the case of a one-vertex complex $P$, or equivalently a $k$-cube group with complex $P$, we  show that the resulting $k$-graphs are rigid in the sense of \cite{LV}, in particular they are aperiodic and yield  classifiable $C^*$-algebras in the sense of the Kirchberg-Phillips classification \cite{Phi}.
In Section~\ref{sec:new examples} we construct $k$-cube complexes on $N$ vertices from $N$-covers of one-vertex complexes, with $N\geq 2$, and prove that the $C^*$-algebras of the associated $k$-graphs with $N$ vertices are covered by the  Kirchberg-Phillips classification theory. In Corollary~\ref{cor:von Neumann factors} we expand the scope of the constructions of $2$-graphs in \cite[Example 7.7]{LLNSW} leading to factors of type III$_{1/2}$ and give an explicit infinite family giving type III$_{{1}/{(2L)^2}}$ factors, with $L$ an arbitrary integer. In section~\ref{sec:spectral theory} we introduce the notion of Ramanujan $k$-graphs and show that there are infinite families of such $k$-graphs, see Theorem~\ref{prop: Ramanujan higher rank graphs}. Example~\ref{ex:Ramanujan from order 25}  details an explicit  Ramanujan $3$-graph on $25$ vertices with optimal spectral gap. We compute the associated $25\times 25$ incidence matrices and estimate the joint spectral gap of the $3$-dimensional digraph  with the help of MAGMA. The $3$-graph moreover features the interesting property that while it arises from an infinite $3$-cube group $\Gamma_1$ which is also an irreducible lattice, in the cover with $25$ sheets each of the three distinct alphabets in $\Gamma_1$ generates a finite group of order $25$.

{\bf Acknowledgment.}  We thank Mark Lawson and Aidan Sims for useful comments. We are grateful to the anonymous  referee for an extensive list of suggestions towards improving the presentation and several proofs.  This research was initiated during a visit by A.V. to Oslo in connection to the Master Class on "Equilibrium states in semigroup theory, K-theory and number theory", 4-6 November, 2019, supported by the Trond Mohn Foundation through the project "Pure Mathematics in Norway". It was carried out while N.L. was a Research Fellow in the Cluster of Excellence Mathematics M\"{u}nster at WWU,  Germany. She thanks for warm  hospitality and excellent working conditions in the Cluster.

\section{Preliminaries}\label{sec:prelim}
\subsection{Categories.}\label{subsection:categories}  We follow the principles laid out in \cite{MacL}, but also keep in mind the interpretation in \cite[Chapter II, \S 1.1 and \S 1.2 ]{Garside-book}. A category $\mathcal{C}$ consists of objects $\operatorname{Obj}(\mathcal{C})$ and morphisms $\operatorname{Hom}(\mathcal{C})$.  We often blur the distinction between $\mathcal{C}$ and $\operatorname{Hom}(\mathcal{C})$, and refer to the latter as the \emph{elements} or \emph{arrows} of $\mathcal{C}$. To each $f\in \mathcal{C}$ there are two objects associated, its \emph{origin}  and \emph{terminus}, and the category is seen as a collection of elements endowed with a partial product governed by compatibility of objects. Of interest to us are categories associated to directed graphs, where concatenation of edges on the graph  will determine composition of arrows in the category, upon natural reversal of origin and terminus.

\subsection{Directed graphs}
By a directed graph $D$ we mean a set $D^0$ of vertices, a set $D^1$ of edges and maps $o,t: D^1\to D^0$ determining the origin and terminus of  edges. Edges whose origin and terminus coincide, also called loops, will be allowed. We shall assume that $D^0, D^1$ are finite. We form a path $ef$ when $t(e)=o(f)$ for $e,f\in D^1$, and extend this to finite directed paths $f_1f_2\dots f_m$ on $D$, and likewise in the case of a finite number of graphs on the same vertex set, see Section~\ref{sec:one vertex k graphs}.

\vskip 0.2cm
\subsection{Complexes covered by products of trees}\label{subsec:square complex}

We start by introducing our definition of $k$-cube complex. Then we expand on the  case of one-vertex $k$-cube complexes, for which we follow  the notation and approach of \cite{vdovina-YB} and \cite{MRV}. We refer to  \cite{wise1}, \cite{burger-mozes:lattices}, and especially \cite[Section 1.2]{rattaggi:thesis} for an introduction to square complexes and $(2m,2l)$ groups, $m,l\geq 1$. We refer to \cite{sageev} for details on $CAT(0)$ complexes and to \cite{Hatcher} for the basic theory of CW complexes. We use the letter $T$ for an arbitrary regular tree, and $\mathcal{T}_l$ for the regular $l$-valent tree, where $l\geq 1$.

\begin{definition}
Let $k\geq 1$ be a positive integer. A CW complex $\mathcal{X}$ is a $k$-dimensional cube complex, or $k$-cube complex, if its universal cover is a Cartesian product of $k$ trees $\RT_1\times\RT_2\times\dots \times\RT_k$, each of which has finite constant valency.
\end{definition}

In the case of a one-vertex $k$-cube complex, for which we use the letter $P$, an equivalent definition  is as the quotient space $P=Z\backslash G$ of a group $G$ with a free and transitive action on a product $Z=\RT_1\times\RT_2\times\dots \times\RT_k$ of $k$ trees. Such $G$ are called $k$-cube groups, see Definition~\ref{defi:cubestructure}. For general $k$-cube complexes with more than one vertex, the similar definition as a quotient space $\mathcal{X}=Z\backslash G$ can be enforced upon replacing transitive action with cocompact action.

 We leave the case of trees with possibly non-constant and/or infinite valency for future discussion.

To describe a $k$-cube complex for $k\geq 2$ it is useful to recall the formalism of $2$-complexes (or square complexes) covered by products of two trees, see e.g. \cite{vdovina-YB}. We use the letter $S$ to denote a generic $2$-complex. A {\em square complex} $S$ is a $2$-dimensional combinatorial cell complex with  $1$-skeleton consisting of a graph $\mathcal{G}(S) = (V(S),E(S))$ with set of vertices $V(S)$, and set of oriented edges $E(S)$, and with $2$-cells arising from a set of squares  that are combinatorially glued to the graph $\mathcal{G}(S)$. More precisely, let $e\mapsto e^{-1}$ denote orientation reversal  of an edge  $e \in E(S)$, and suppose that $(e_1,e_2,e_3,e_4)$ is a $4$-tuple of oriented edges in $E(S)$ with the origin of $e_{i+1}$ equal to the terminus of $e_i$ (for $i$ modulo $4$). A  square $\square = (e_1,e_2,e_3,e_4)$ is the orbit of  $(e_1,e_2,e_3,e_4)$  under the dihedral action  generated by cyclically permuting the edges $e_i$ and by  the reverse orientation  map
\begin{equation}\label{eq:equivalence general square}
(e_1,e_2,e_3,e_4) \mapsto (e_4^{-1},e_3^{-1},e_2^{-1},e_1^{-1}).
\end{equation}

As customary, we think of a square-shaped $2$-cell glued to the (topological realization of the) respective edges of the graph $\mathcal{G}(S)$.

A vertical/horizontal structure (in short, a VH-structure) on a square complex is given by  a bipartite structure of the set of unoriented edges $\overline{E(S)}=E_V\sqcup E_H$ such that for every vertex $v$ in $V(S)$ the link at $v$ is the complete bipartite graph on the resulting partition $E(v)=E(v)_V\sqcup E(v)_H$, with $E(v)$ denoting the set of oriented edges with origin $v$. Torsion-free cocompact lattices $\Gamma$ in  $\mathbb Aut(\mathcal{T}_m) \times \mathbb Aut(\mathcal{T}_l)$ with $m,l\geq 1$, not interchanging the factors and considered up to conjugation, correspond uniquely to finite square complexes $S$ with a VH-structure of partition size $(2m,2l)$ up to isomorphism. Simply transitive torsion-free lattices not interchanging the factors correspond to finite square complexes with  one vertex and a VH-structure, necessarily of constant partition size.

\subsection{One-vertex $k$-cube complexes}

We first look at the case when $k=2$. Let $S$ be a square complex with  one vertex $v \in S$ and a VH-structure $\overline{E(S)} = E_V \sqcup E_H$. Pictorially, this consists of a collection of squares, each of which has four vertices labelled $v$.  Passing from the origin to the terminus of an oriented edge $e$ in a square corresponds to a fixed point free involution $e\to e^{-1}$ on $E(v)_V$ and on $E(v)_H$. Thus the partition size is necessarily a tuple $(2m,2l)$ of even integers, $m,l\geq 1$. The lattice identified with $\pi_1(S,v)$ admits a description in terms of two generating subsets $A,B$, see \cite[Definition 5]{vdovina-YB}.

\begin{definition} \label{defi:BMstructureingroup}
A  {\em vertical/horizontal structure}, or {\em VH-structure},  in a group $G$ is an ordered pair $(A,B)$ of finite subsets $A,B \subseteq G$ such that the following hold.
\begin{enumerate}
\item \label{defiitem:AandBinvolution} Taking inverses induces fixed point free involutions on $A$ and $B$.
\item The union $A \cup B$ generates $G$.
\item \label{defiitem:ABequalsBA} The product sets $AB$ and $BA$ have size $\#A \cdot \#B$ and satisfy $AB = BA$.
\item \label{defiitem:AB2torsion} The sets $AB$ and $BA$ do not contain $2$-torsion.
\end{enumerate}
The tuple $(\#A,\#B)$ is called the {\em valency vector} of the VH-structure in $G$.
\end{definition}

If a group $G$ admits a VH-structure $(A,B)$ of valency vector $(\# A, \# B)$, then by \cite[Section 6.1]{burger-mozes:lattices}, there is a square complex $S_{A,B}$ with one vertex and a VH-structure in the sense of subsection \ref{subsec:square complex}. The set of oriented edges of $S_{A,B}$ is the disjoint union $
E(S_{A,B}) = A \sqcup B$,
with the orientation reversion map given by $e \mapsto e^{-1}$, and with $A,B$ labelling the edges in vertical and horizontal direction, respectively. The link of $S_{A,B}$ in $v$ is the complete bipartite graph with vertices labelled by $A$ and $B$, see \cite[Lemma 1]{vdovina-YB}, and \cite[Theorem C]{Ballmann-Brin1995} implies that the universal cover of $S_{A,B}$ is a product of trees. Conversely, given a square complex $S$ with a VH-structure $(A,B)$ and a single vertex, its fundamental group (i.e. the fundamental group of its topological realisation) admits a VH-structure of valency $(\# A, \# B)$, see \cite[Proposition 5.7]{RSV}. We refer to it as a $(\#A, \#B)$-group. Example \ref{ex:group 37} shows a $(4,4)$-complex with associated $(4,4)$-group.

To describe the geometric squares of $S_{A,B}$, note that a relation $ab=b'a'$ in $G$ with $a,a' \in A$ and $b,b' \in B$ (not necessarily pairwise distinct), as prescribed by Definition \ref{defi:BMstructureingroup}, leads to four algebraic relations obtained  upon cyclic permutation and inversion, namely
\begin{equation}\label{eq:relationabba}
ab = b'a',\quad
a^{-1}b' =  ba'^{-1}, \quad
a'^{-1}b'^{-1}  =  b^{-1}a^{-1}\quad \text{ and }a'b^{-1}= b'^{-1}a.
\end{equation}
This leads to the definition of a geometric square as a tuple of four Euclidean squares. All four vertices in each square coalesce into the single vertex $v$ of $S_{A,B}$ when we glue the edges according to labels and orientation.  Before we introduce our convention, we recall briefly two other (equivalent) conventions for describing geometric squares.

\subsection{One convention - see e.g. Rattaggi}
The formalism of a geometric square seen as a $4$-tuple of squares in Euclidean space is well-known. In \cite[Figure 4.1, page 182]{rattaggi:thesis}, for example, the group relation $ab=b'a'$ is reflected by the $4$-tuple of squares
having edges labelled according to a one-way cyclic permutation in counterclockwise direction, see below:
\begin{equation}\label{eq:squares rattaggi}
\def\g#1{\save[]!C="g#1"\restore}%
\xymatrix{
\g1 \ar @{}[dr] |{S_O}{\bullet}\ar[d]_{b'} & {\bullet}\ar[l]_{a'} &\g2 \ar @{}[dr] |{S_R}{\bullet}\ar[d]_{b} & {\bullet}\ar[l]_{a}&\g3 \ar @{}[dr] |{S_H}{\bullet}\ar[r]^{a'} & {\bullet}\ar[d]^{b'} &\g4 \ar @{}[dr] |{S_V}{\bullet}\ar[r]^{a} & {\bullet}\ar[d]^{b}\\
{\color{blue}{{\bullet}}}\ar[r]_{a} & {\bullet}\ar[u]_{b} & {\color{blue}{{\bullet}}}\ar[r]_{a'} & {\bullet}\ar[u]_{b'} & {\color{blue}{{\bullet}}}\ar[u]^{b} & {\bullet}\ar[l]^{a} &
{\color{blue}{{\bullet}}}\ar[u]^{b'} & {\bullet}\ar[l]^{a'}
}
\end{equation}
The notation means that if $S_O$ is regarded as a reference square, then $S_H$ is obtained by reflection in the horizontal direction (about $b$), $S_V$ by reflection in the vertical direction (about $a$) and, finally, $S_R$ arises from rotation counterclockwise by $\pi$. Our use of $S_O, S_R, S_H, S_V$ as notation for the squares is inspired by \cite[Section 2]{MRV}.

The geometric square associated to $ab=b'a'$ in \cite{rattaggi:thesis} and visualised in \eqref{eq:squares rattaggi} is given by
\[
\{(a,b,a',b'),\, (a', b', a, b),\, (a^{-1},{b'}^{-1}, {a'}^{-1}, b^{-1}),\, ({a'}^{-1}, b^{-1},a^{-1},{b'}^{-1})\}.
\]

\subsection{A second convention - see Kimberley-Robertson}
In  \cite{KR},  Kimberley-Robertson adopted a two-direction labelling of their squares which to a group relation $ab=b'a'$ assigns a $4$-tuple of squares according to the convention below:
\begin{equation}\label{eq:squares kimberley robertson}
\def\g#1{\save[]!C="g#1"\restore}%
\xymatrix{
\g1 \ar @{}[dr] |{S_O}{\bullet}\ar[r]^{a'} & {\bullet} &\g2 \ar @{}[dr] |{S_R}{\bullet}\ar[d]_{b} & {\bullet}\ar[l]_{a}\ar[d]^{b'} &\g3 \ar @{}[dr] |{S_H}{\bullet} & {\bullet}\ar[l]_{a'} &\g4 \ar @{}[dr] |{S_V}{\bullet}\ar[r]^{a}\ar[d]_{b'} & {\bullet}\ar[d]^{b}\\
{\color{blue}{{\bullet}}}\ar[u]^{b'}\ar[r]_{a} & {\bullet}\ar[u]_{b} & {\color{blue}{{\bullet}}} & {\bullet}\ar[l]^{a'} & {\color{blue}{{\bullet}}}\ar[u]^{b} & {\bullet}\ar[u]_{b'}\ar[l]^{a} &
{\color{blue}{{\bullet}}}\ar[r]_{a'} & {\bullet}
}
\end{equation}
The geometric square associated to $ab=b'a'$ and visualised in \eqref{eq:squares kimberley robertson} is given by
\[
\{(a,b,b',a'),\, ({a'}^{-1}, {b'}^{-1}, b^{-1}, a^{-1}),\, (a^{-1},b', b, {a'}^{-1}), \, (a', b^{-1}, {b'}^{-1}, a)\}.
\]

\subsection{Our convention}
We choose a convention that will facilitate our constructions of $k$-graphs later on, and in particular we swap the letters for vertical and horizontal directions, as follows: we keep the cyclic permutation in counterclockwise direction from \cite{rattaggi:thesis} but choose labelling of edges as "starting" at one vertex by going out in both horizontal and vertical direction, similar to \cite{KR}.

\begin{equation}\label{eq:squares larsen vdovina}
\def\g#1{\save[]!C="g#1"\restore}%
\xymatrix{
\g1 \ar @{}[dr] |{S_O}{\bullet}\ar[r]^{b} & {\bullet} &\g2 \ar @{}[dr] |{S_R}{\bullet}\ar[d]_{a'} & {\bullet}\ar[l]_{b'}\ar[d]^{a} &\g3 \ar @{}[dr] |{S_H}{\bullet} & {\bullet}\ar[l]_{b} &\g4 \ar @{}[dr] |{S_V}{\bullet}\ar[r]^{b'}\ar[d]_{a} & {\bullet}\ar[d]^{a'}\\
{\color{blue}{{\bullet}}}\ar[u]^{a}\ar[r]_{b'} & {\bullet}\ar[u]_{a'} & {\color{blue}{{\bullet}}} & {\bullet}\ar[l]^{b} & {\color{blue}{{\bullet}}}\ar[u]^{a'} & {\bullet}\ar[u]_{a}\ar[l]^{b'} &
{\color{blue}{{\bullet}}}\ar[r]_{b} & {\bullet}
}
\end{equation}
Explicitly, we define a \emph{geometric square} as visualised in \eqref{eq:squares larsen vdovina} to be a tuple
\begin{equation}\label{eq:4 relations on a and b}
\{({a},{b},{a}'^{-1},{b}'^{-1}),\, ({a}'^{-1},{b}'^{-1},{a},{b}),\, ({a}',{b}^{-1},{a}^{-1},{b}'),\,({a}^{-1},{b}',{a}',{b}^{-1})\},
\end{equation}
where any two squares are seen as equivalent.

Since for our purposes it will be important to keep track of how such  squares arise, we introduce the following more precise notation: for $a\in A$ and $b\in B$, we let
\begin{equation}\label{eq:S ab}
S_O^{a,b}:=(a,b,a'^{-1}, b'^{-1}),
\end{equation}
where $a',b'$ are the unique elements in $A$ and $B$, respectively, such that $ab=b'a'$. We refer to $ab$ as the \emph{vertical-horizontal pair of edges in $S_{O}^{a,b}$} and to $b'a'$ as the \emph{horizontal-vertical pair of edges in $S_O^{a,b}$}.

In \cite{vdovina-YB}, the last named author generalised VH-structure to the $k$-dimensional case, as follows.

\begin{definition} (See \cite[Definition 7]{vdovina-YB}) \label{defi:cubestructure}
A {\em $k$-cube structure}  in a group $G$ is an ordered $k$-tuple $(A_1, \ldots, A_k)$ of finite subsets $A_i \subseteq G$ such that the following hold for all $i,j=1,\dots, k$, $i\neq j$:
\begin{enumerate}
\item[(1)] \label{defiitem:AandBinvolution} Taking inverses induces fixed point free involutions on $A_i$.
\item[(2)] The union $\cup A_i$ generates $G$.
\item[(3)] \label{defiitem:ABequalsBA} The product sets $A_iA_j$ and $A_jA_i$  have size $\#A_i \cdot \#A_j$ and $A_iA_j = A_jA_i$.
\item[(4)] \label{defiitem:AB2torsion} The sets $A_iA_j$ and $A_jA_i$ do not contain $2$-torsion.
\item[(5)] The group $G$ acts simply transitively on a Cartesian product of $k$ trees.
\end{enumerate}
The tuple $(\#A_1, \ldots,\#A_k)$ is the {\em valency vector} of the $k$-cube structure in $G$, and  $A_1, \ldots, A_k$ are generating sets of $G$.
\end{definition}

We note that each pair $(A_i,A_j)\subseteq G$ for $i,j=1,\dots ,k$ with $i\neq j$ forms a subgroup $G_{i,j}$ of $G$ equipped with a VH-structure. This observation can be used to show that to a given $k$-cube group $G$ with generating family $(A_1, \ldots, A_k)$ there is an associated $k$-cube complex $P_{(A_1, \ldots, A_k)}$. Its $2$-dimensional cells are prescribed by the square complexes $S_{A_i,A_j}$ obtained from each $(\#A_i,\#A_j)$ group $G_{i,j}$ for $i\neq j$.

\begin{remark}\label{rmk:k cube complkexes versus groups}
In studying $k$-cube groups and one-vertex $k$-cube complexes, we will move freely between two equivalent interpretations. Starting from a $k$-cube group $G$  defined algebraically through properties (1)-(5) in Definition~\ref{defi:cubestructure}, the associated quotient space $Z\backslash G$ with $Z$ a product of $k$ trees is a $k$-cube complex with one vertex. Its construction as a CW complex from $j$-cells for $0\leq j\leq k$ is detailed in \cite[Definition 2.3]{MRV}. Conversely, one may define a $k$-cube group $G$ from combinatorial data by starting with $k$ finite sets of even cardinalities (encoding edges), and building up by induction (on dimension of cells) a $k$-dimensional complex with the necessary compatibility to yield generating sets $A_1, A_2,\dots, A_k$ for $G$, see \cite[Definition 2.4]{MRV}.
\end{remark}

We next introduce some notation for cubes in a one-vertex $k$-cube complex $P$ for $k\geq 2$,  with motivation and inspiration drawn from \cite[Section 2]{MRV}. We let $E$ be the set of edges. Because $P$ is the complex associated to a $k$-cube group, $E$ partitions into $k$ subsets  as $E=E_1\sqcup \dots \sqcup E_k$ in such a way that if an edge $e$ is in $E_j$ then $e^{-1}\in E_j$ for $j=1,\dots, k$. We refer to $E_i$ as the subset of edges of \emph{colour} $i$, for $i=1,\dots,k$, where the $k$ colours are assumed distinct.
The $2$-cells of $P$ are  geometric squares of the form $S_O=(a,b,a'^{-1},b'^{-1})$  regarded as the equivalence class $\{S_O, S_R, S_H, S_V\}$  described in \eqref{eq:4 relations on a and b}, where $a,a'\in  E_i$ and $b,b'\in E_j$ for $i\neq j$. By a \emph{geometric square} we mean any
square in $\{S_O, S_R, S_H, S_V\}$. Similar to \cite{MRV}, for distinct colours $i\neq j$ we let
\[
F(i,j)=\{S=(a,b,a'^{-1}, b'^{-1})\mid S\text{ is a geometric square with }a,a'\in E_i, b,b'\in E_j\},
\]
and we denote by $S^{ij}$ a generic square in $F(i,j)$ for all $i\neq j$ in $\{1,\dots ,k\}$.
The $3$-cells are determined by \emph{geometric cubes}, all whose $6$ faces are geometric squares, see figure~\ref{figure generic cube} for a (generic) such cube.

\begin{figure}[h]
\centering
\includegraphics[height=4cm]{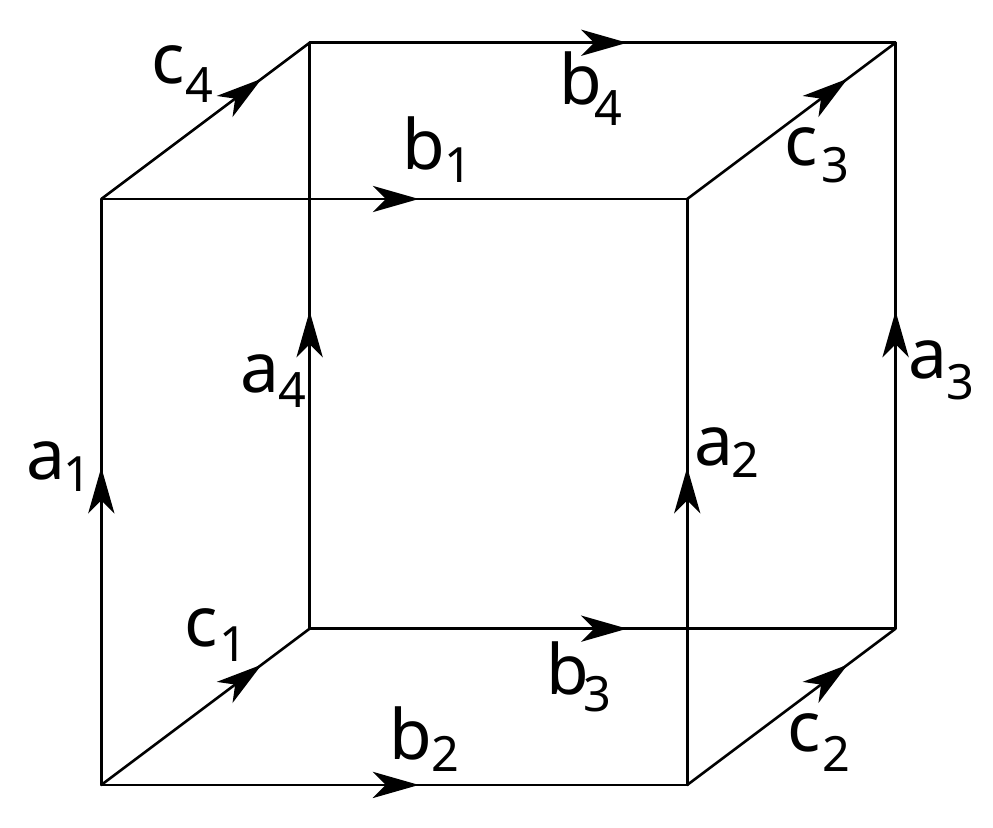}
\caption{A generic $3$-cube}\label{figure generic cube}
\end{figure}
More precisely, the $6$ faces of the cube are geometric squares $(S_1^{ij}, S_2^{il}, S_3^{jl}, S_4^{jl}, S_5^{il}, S_6^{ij})$, with
\begin{align*}
S_1^{ij}&=(a_1,b_1,a_2^{-1},b_2^{-1}),\text{ front face }\\
S_2^{il}&=(a_2,c_3,a_3^{-1},c_2^{-1}),\text{ right face}\\
S_3^{jl}&=(b_2,c_2,b_3^{-1},c_1^{-1}),\text{ bottom face}\\
S_4^{jl}&=(b_1,c_3,b_4^{-1}, c_4^{-1}),\text{ top face}\\
S_5^{il}&=(a_1,c_4,a_4^{-1},c_1^{-1}),\text{ left face}\\
S_6^{ij}&=(a_4,b_4,a_3^{-1},b_3^{-1}),\text{ back face}.
\end{align*}
In particular, any one of the $6$ geometric squares in this list is given subject to the equivalence relation \eqref{eq:4 relations on a and b}, and the geometric cube can be equivalently presented with any one of the $8$ vertices in the bottom-left position. We stress that the labels $a_1,\dots, a_4$, $b_1,\dots, b_4$, $c_1,\dots,c_4$ here are formal symbols that keep track of how the cubes are glued in the complex. As already mentioned, if $e$ is a label for an edge, then $e^{-1}$ is the label recording orientation reversal.

For $4\leq l\leq k$, the $l$-dimensional cells are $l$-cubes, see \cite{MRV}.

A given $k$-cell in a $k$-cube complex $P$ has a topological realisation as the product of intervals $[0,1]^k$. Denoting by $\varepsilon_i$ the standard basis elements in $\mathbb{R}^k$, for $i=1,\dots,k$, we view a geometric edge in $P$  as having \emph{degree $\varepsilon_i$} if it lies in the span of the generator $\varepsilon_i$  in its topological realisation.  This agrees with the degree of paths in higher rank graphs in section~\ref{sec:one vertex k graphs}.

\subsection{Examples of cube groups}\label{subsection:RSV groups}

The cube groups in these section were introduced in \cite{vdovina-YB}. They contain as a particular case arithmetic lattices and non-residually finite CAT(0) groups constructed in \cite{RSV} .
We refer to them as \emph{RSV-groups}. They are  \emph{the first explicit examples} of arithmetic groups acting freely and transitively on products of $k$ trees of constant valencies, for $k\geq 3$, as well as non-residually finite CAT(0) groups of dimensions
$k\geq 3$. RSV groups are irreducible in the sense that they can not be presented as virtual products of group  actions on products of smaller number of trees.

We recall here a construction of an explicit series of RSV lattices, which is infinite in several parameters, $k$, $q$ and $\delta$. The significance of the irreducibility of these groups is that the associated $k$-graphs can not presented as virtual products, so are entirely new.

For $q$ an odd prime, let $\delta \in \bF_{q^2}^\times$ be a generator of the multiplicative group of the field with $q^2$ elements. If $i,j \in \bZ/(q^2-1)\bZ$ satisfy $i \not\equiv j \pmod{q-1}$, then  $1+\delta^{j-i} \not= 0$, and it follows that there is a unique $x_{i,j} \in \bZ/(q^2-1)\bZ$ with $
\delta^{x_{i,j}} = 1  + \delta^{j-i}.$

Set $y_{i,j} := x_{i,j} + i - j$, and note that $
\delta^{y_{i,j}} = \delta^{x_{i,j} + i - j} = (1  + \delta^{j-i}) \cdot \delta^{i-j} = 1  + \delta^{i-j}$. Define
\begin{align*}
l(i,j) & := i -  x_{i,j}(q-1), \\
k(i,j) & := j -  y_{i,j}(q-1),
\end{align*}
and further let $M \subseteq \bZ/(q^2-1)\bZ$ be a union of cosets under $(q-1)\bZ/(q^2-1)\bZ$ with $\#M=k$.

If $q$ is odd, it was shown in \cite{RSV} that the following groups act freely and transitively on a product of $k$ trees:
\[
\Gamma_{M,\delta} = \left\langle a_i, i \in M \ \left|
\begin{array}{c}
a_{i+ (q^2 - 1)/2}  a_i = 1 \text{ for all $i \in M$}, \\
a_i a_j = a_{k(i,j)}a_{l(i,j)} \text{ for all $i,j \in M$ with $i \not\equiv j \pmod{q-1}$}
\end{array}
\right.\right\rangle
\]

\begin{example}
\label{ex:666}
The smallest example in dimension $k=3$ arises with $q=5$ and $M$ equal to the collection of cosets  $i\in\Z/{24}\Z$ with $i$ not dividing $4$. This group, denoted $\Gamma_1$, acts vertex transitively on the product of three regular trees $\mathcal{T}_{6} \times  \mathcal{T}_{6} \times  \mathcal{T}_{6}$ and has the presentation
\[
\Gamma_{1} = \left\langle \begin{array}{c}
a_1,a_5,a_9,a_{13},a_{17},a_{21}, \\
b_2,b_6,b_{10},b_{14},b_{18},b_{22}, \\
c_3,c_7,c_{11},c_{15},c_{19},c_{23}
\end{array}
\ \left|
\begin{array}{c}
a_ia_{i+12} = b_i b_{i+12} = c_ic_{i+12} = 1  \ \text{ for all $i$ }, \\

a_{1}b_{2}a_{17}b_{22}, \
a_{1}b_{6}a_{9}b_{10}, \
a_{1}b_{10}a_{9}b_{6}, \
a_{1}b_{14}a_{21}b_{14}, \
a_{1}b_{18}a_{5}b_{18}, \\
a_{1}b_{22}a_{17}b_{2}, \
a_{5}b_{2}a_{21}b_{6}, \
a_{5}b_{6}a_{21}b_{2}, \
a_{5}b_{22}a_{9}b_{22}, \\

a_{1}c_{3}a_{17}c_{3}, \
a_{1}c_{7}a_{13}c_{19}, \
a_{1}c_{11}a_{9}c_{11}, \
a_{1}c_{15}a_{1}c_{23}, \
a_{5}c_{3}a_{5}c_{19}, \\
a_{5}c_{7}a_{21}c_{7}, \
a_{5}c_{11}a_{17}c_{23}, \
a_{9}c_{3}a_{21}c_{15}, \
a_{9}c_{7}a_{9}c_{23}, \\

b_{2}c_{3}b_{18}c_{23}, \
b_{2}c_{7}b_{10}c_{11}, \
b_{2}c_{11}b_{10}c_{7}, \
b_{2}c_{15}b_{22}c_{15}, \
b_{2}c_{19}b_{6}c_{19}, \\
b_{2}c_{23}b_{18}c_{3}, \
b_{6}c_{3}b_{22}c_{7}, \
b_{6}c_{7}b_{22}c_{3}, \
b_{6}c_{23}b_{10}c_{23}.
\end{array}
\right.\right\rangle
\]

Thus $\Gamma_1$ is a $3$-cube group with $A_1=\{a_1,a_5,a_9,a_{13}, a_{17},a_{21}\}$ and similar descriptions for $A_2$ and $A_3$. It  is an arithmetic group, so it is residually finite. Of interest to us is the fact that it admits quotients of order $5^l, l\in \mathbb{N}$,
see Example~\ref{ex:Ramanujan from order 25}.
\end{example}

In \cite{RSV}, the authors also constructed $k$-cube groups acting on a product of trees of distinct constant valencies. Explicitly,  for any set of size $k$ of distinct odd primes $p_1, \ldots, p_k$, there is a group acting simply transitively on a product of $k$ trees of valencies $p_1+1,\ldots, p_k+1$, obtained using Hamiltonian quaternion algebras.

\begin{example}
For $p_1=3,p_2=5,p_3=7$, there is an explicit presentation of a group acting simply transitively on a product of three trees $
\mathcal{T}_{4} \times  \mathcal{T}_{6} \times  \mathcal{T}_{8}$,
see \cite{RSV}. Indeed, with $\mathbf{i}, \mathbf{j}, \mathbf{k}$ denoting the quaternions, let
\begin{align*}
a_1 = 1 + \mathbf{j} + \mathbf{k}, \ a_2 = 1+\mathbf{j}-\mathbf{k}, \ a_3 = 1-\mathbf{j}-\mathbf{k}, \ a_4 = 1 -\mathbf{j} + \mathbf{k}, \\
b_1 = 1 + 2\mathbf{i}, \ b_2  = 1 + 2\mathbf{j}, \ b_3 = 1 + 2\mathbf{k}, \ b_4 = 1 - 2\mathbf{i}, \ b_5 = 1- 2\mathbf{j}, \ b_6 = 1 - 2\mathbf{k}, \\
c_1 = 1+2\mathbf{i} + \mathbf{j} + \mathbf{k}, \ c_2 = 1-2\mathbf{i} + \mathbf{j} + \mathbf{k}, \ c_3 = 1+2\mathbf{i} - \mathbf{j} + \mathbf{k}, \ c_ 4= 1+2\mathbf{i} + \mathbf{j} - \mathbf{k}, \\
 c_5 = 1- 2\mathbf{i} -  \mathbf{j} - \mathbf{k}, \ c_6 = 1+2\mathbf{i} - \mathbf{j} - \mathbf{k}, \ c_7 = 1-2\mathbf{i} + \mathbf{j} - \mathbf{k}, \ c_8 = 1-2\mathbf{i} - \mathbf{j} + \mathbf{k}.
\end{align*}
Then $a_l^{-1} = a_{l+2}$, $b_l^{-1} = b_{l+3}$, and $c_l^{-1} = c_{l+4}$. The required group is given
 by
\[
\Gamma_2 = \left\langle
\begin{array}{c}
a_1,\dots a_4 \\
b_1,\dots,b_6 \\
c_1,\dots, c_8
\end{array}
\ \left|
\begin{array}{c}
a_1b_1a_4b_2,  \ a_1b_2a_4b_4, \  a_1b_3a_2b_1, \
a_1b_4a_2b_3,  \ a_1b_5a_1b_6, \ a_2b_2a_2b_6 \\

a_1c_1a_2c_8, \ a_1c_2a_4c_4, \ a_1c_3a_2c_2, \ a_1c_4a_3c_3, \\
a_1c_5a_1c_6, \ a_1c_7a_4c_1, \ a_2c_1a_4c_6, \ a_2c_4a_2c_7 \\

b_1c_1b_5c_4, \
b_1c_2b_1c_5, \
b_1c_3b_6c_1, \
b_1c_4b_3c_6, \
b_1c_6b_2c_3, \
b_1c_7b_1c_8, \\
b_2c_1b_3c_2, \
b_2c_2b_5c_5, \
b_2c_4b_5c_3, \
b_2c_7b_6c_4, \
b_3c_1b_6c_6, \
b_3c_4b_6c_3
\end{array}
\right.\right\rangle.
\]
This is also denoted $\Gamma_{3,5,7}$, see \cite{RSV}.
\end{example}

\subsection{Higher rank graphs}\label{subs: higher rank graphs} We recall the definition of a $k$-graph due to Kumjian and Pask \cite{KP}.
For an integer $k \ge 1$, we view $\N^k$ as a monoid under pointwise addition. A $k$-graph is a countable small category $\Lambda$ together with an
assignment of a \emph{degree} $d(\mu)\in\N^k$ to every morphism $\mu\in\Lambda$
such that for all $\mu,\nu,\pi\in \Lambda$ the following hold
\begin{enumerate}
\item $d(\mu\nu)=d(\mu)+d(\nu)$; and
\item whenever $d(\pi)=m+n$ for $m,n \in \N^k$, there is a unique factorisation $\pi=\mu\nu$
    such that $d(\mu)=m$ and $d(\nu)=n$.
\end{enumerate}
Condition~(2) is known  as the \emph{factorisation property} in the $k$-graph. The composition in $\mu\nu$ is understood in the sense of morphisms, thus the source $s(\mu)$ of $\mu$ equals the range $r(\nu)$ of $\nu$. Note that the morphisms of degree $0$ (in $\N^k$) are the identity morphisms in the category. Denote this set by $\Lambda^0$, and refer to its elements  as \emph{vertices} of $\Lambda$. With $e_1,\dots,e_k$ denoting the generators of $\N^k$, the set  $\Lambda^{e_i}=\{\lambda\in \Lambda\mid d(\lambda)=e_i\}$ consists of edges (or morphisms) of degree $e_i$, for  $i=1,\dots,k$. We write $v\Lambda^n$ for the set of morphisms of degree $n\in \N^k$ with range $v$.

Throughout this paper we are concerned with $k$-graphs where $\Lambda^0$ and all $\Lambda^{e_i}$, $i=1,\dots, k$, are finite. A $k$-graph $\Lambda$ so that $0<\#v\Lambda^n<\infty$ for all $v\in \Lambda^0$ and all $n\in \N^k$ is source free and row-finite. Following \cite{AHLRS2}, a finite $k$-graph $\Lambda$ is \emph{strongly connected} if $v\Lambda w \not= \emptyset$ for all vertices $v,w \in \Lambda^0$.

The \emph{coordinate matrices} $M_1, \dots, M_k\in\operatorname{Mat}_{\Lambda^0}(\N)$ of $\Lambda$
are $\Lambda^0\times \Lambda^0$ matrices with
\[
M_i(v,w) = |v\Lambda^{e_i}w|.
\]
By the factorisation property, the matrices $M_i$ pairwise commute for $i=1,\dots ,k$. For ${n}=(n_i)_{i=1,\dots,k} \in
\N^k$, we define
\begin{equation*}\label{pairwisecomm}\textstyle
M^{{n}} := \prod^k_{i=1} M_i^{n_i}.
\end{equation*}
We denote the spectral radius of a square matrix $B$ by $\rho(B)$, and we let
\[
\rho(\Lambda) := (\rho(M_1), \rho(M_2), \dots, \rho(M_k)) \in [0,\infty)^k.
\]
For ${m} \in \Z^k$ we write $\rho(\Lambda)^{{m}}$ for the product $\prod^k_{i=1}
\rho(M_i)^{m_i}$.

Given a row-finite, source free  $k$-graph $\Lambda$, its associated $C^*$-algebra $C^*(\Lambda)$ is the universal $C^*$-algebra generated by a family $\{\mathbf{s}_\mu \mid
\mu\in\Lambda\}$ of partial isometries  satisfying
\begin{itemize}
\item[(CK1)] $\{\mathbf{s}_v \mid v \in \Lambda^0\}$ is a family of mutually orthogonal
    projections;
\item[(CK2)] $\mathbf{s}_\mu \mathbf{s}_\nu = \mathbf{s}_{\mu\nu}$ whenever $s(\mu) = r(\nu)$;
\item[(CK3)] $\mathbf{s}_\mu^* \mathbf{s}_\mu = \mathbf{s}_{s(\mu)}$ for all $\mu$;
\item[(CK4)] $\mathbf{s}_v = \sum_{\mu \in v\Lambda^{{n}}} \mathbf{s}_\mu \mathbf{s}^*_\mu$ for all $v\in \Lambda^0,{n}\in \N^k$.
\end{itemize}

\section{Construction of $k$-graphs from $k$-cube groups: the one-vertex case}\label{sec:one vertex k graphs}

In this section we construct $k$-graphs with one vertex, for $k\geq 2$. There are two main steps. The first is a general  procedure by which we associate a category to a $k$-dimensional digraph as introduced in Definition~\ref{def:k digraph from coloured graph}, and prove that the conditions (F1) and (F2) inherent to the $k$-dimensional digraph imply the existence of a degree functor from the category to $\N^k$ that satisfies the required factorisation property. This step will be reminiscent of the construction of Artin monoids as quotients of free monoids by an equivalence relation on the collection of positive words identifying braid strings, see \cite{BS72}. The degree functor will be similar to the degree map on an Artin monoid, cf. \cite{Saito}. The second step is to provide $k$-dimensional digraphs, and here we shall use one-vertex cube complexes associated to $k$-cube groups as a source from which to construct such digraphs.

In Section~\ref{sec:new examples} we  use the results of this section combined with concrete constructions of covering maps in the context of complexes to produce higher rank graphs with more than one vertex. We stress that our constructions are performed on the complexes, which depend on finite combinatorial data, and not on the $k$-graphs, which are categories with additional structure.

The next result is the abstract construction of the category associated to a $k$-dimensional digraph.

\begin{theorem}\label{thm:the category associated to a digraph}
Suppose that  $\operatorname{DG}=(V,E, o,t,k,\phi)$ is a $k$-dimensional digraph, $k\geq 2$. Then there exist a small category $\Lambda_{\operatorname{DG}}$ with $V$ as set of objects and a functor $d:\Lambda_{\operatorname{DG}}\to \N^k$ which assigns value (degree) $e_i$ to morphisms determined by edges in $E_i$, for all $1\leq i\leq k$. Moreover, the functor $d$ has the unique factorisation property and in particular, $(\Lambda_{\operatorname{DG}},d)$ is a higher rank graph.
 \end{theorem}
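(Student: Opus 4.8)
The plan is to build $\Lambda_{\operatorname{DG}}$ as a quotient of the free category (path category) on the underlying coloured directed graph, modulo the congruence generated by the relation $xy \sim \phi(xy)$, and then verify the $k$-graph axioms. Concretely, let $\mathcal{F}$ denote the free category on $(V,E,o,t)$ whose morphisms are finite directed paths (with the empty path at each vertex as identity); composition is concatenation. On $\operatorname{Hom}(\mathcal{F})$ define $\sim_0$ by declaring $w_1\, xy\, w_2 \sim_0 w_1\, y'x'\, w_2$ whenever $xy\sim y'x'$ via (F1) and $w_1, w_2$ are composable paths, and let $\sim$ be the smallest equivalence relation containing $\sim_0$. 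Because (F1) preserves the origin and terminus of the length-two subpath, $\sim$ is a congruence: it respects both origin/terminus and left/right composition, so $\Lambda_{\operatorname{DG}} := \mathcal{F}/\!\sim$ is a small category with object set $V$. Since (F1) also preserves the multiset of colours of the edges in a path, there is a well-defined map $d:\Lambda_{\operatorname{DG}}\to\N^k$ sending the class of a path $f_1\cdots f_m$ to $\sum_{r=1}^m e_{c(f_r)}$ where $c(f_r)$ is the colour of $f_r$; this $d$ is a functor, is additive, and assigns $e_i$ to the class of any single edge in $E_i$.

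Next I would establish the unique factorisation property. Fix a morphism $\pi\in\Lambda_{\operatorname{DG}}$ with $d(\pi)=m+n$, $m,n\in\N^k$; I must show $\pi=\mu\nu$ with $d(\mu)=m$, $d(\nu)=n$, uniquely. \textbf{Existence} comes from a normal-form / rewriting argument: using the moves $xy\sim y'x'$ one can always "bubble" edges of the colours appearing in $m$ to the front. More precisely, I would prove by induction on path length that every path is $\sim$-equivalent to a path in which, reading left to right, all edges whose colour index is minimal among those present come first, etc.; iterating and regrouping yields a representative of $\pi$ of the form (block of total degree $m$)(block of total degree $n$). \textbf{Uniqueness} is the crux and is exactly what (F2) is designed to give. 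It suffices to treat the basic case: if a path $p$ of length two can be written as $xy$ with $x\in E_i$, $y\in E_j$, $i\ne j$, then its $\sim$-class meets $E_iE_j$ in exactly one path and $E_jE_i$ in exactly one path — this is immediate from (F1) and $\phi^2=\mathrm{id}$. The real content is that the rewriting system is \emph{confluent}, i.e. $\sim$-equivalent paths have a common reduct; by Newman's lemma (the system is terminating for an obvious length/colour ordering) confluence follows from \emph{local} confluence, and the only non-trivial overlap of two elementary moves occurs on a length-three subpath $xyz$ with three distinct colours $i,j,l$, where one move acts on $xy$ and the other on $yz$. Chasing the two resulting chains of moves is precisely the hexagon
\[
xy\sim y^1x^1,\ x^1z\sim z^1x^2,\ y^1z^1\sim z^2y^2 \quad\text{vs.}\quad yz\sim z_1y_1,\ xz_1\sim z_2x_1,\ x_1y_1\sim y_2x_2,
\]
and (F2) asserts exactly that these two chains end at the same path $z y x$ (with $z$-edge $z_2=z^2$, etc.). Hence the reduction to "each colour-sorted block" normal form is confluent, the normal form is unique, and factorisations with prescribed degrees $m,n$ are unique.

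The main obstacle is the confluence step: one must check carefully that \emph{every} local obstruction reduces to the $xyz$ hexagon of (F2). Two elementary moves on disjoint length-two windows of a path commute trivially; moves on windows overlapping in a single edge $xy$ versus $yz$ force three colours (if two of $i,j,l$ coincide the middle and outer edges would have the same colour, which is excluded from $Y$ in Definition~\ref{def:k digraph from coloured graph}, so no such move exists) and give the hexagon; there is no overlap in two edges because a move is determined by a single colour-change window. Once local confluence is in hand, Newman's lemma upgrades it to global confluence, and then the argument above closes: one would spell out that the $m$-block/$n$-block decomposition is obtained by applying rewrites until no "out-of-order colour pair straddling the cut" remains, that this terminates, and that the endpoint is forced. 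The remaining verifications — that $d$ is a functor, that $\Lambda_{\operatorname{DG}}$ is small with finite vertex set $V$, and that the finiteness hypotheses of Section~\ref{subs: higher rank graphs} hold so that $(\Lambda_{\operatorname{DG}},d)$ is genuinely a $k$-graph in the sense of \cite{KP} — are then routine. I would also remark on the analogy, already flagged in the text, with the construction of Artin monoids from braid relations (\cite{BS72}, \cite{Saito}), where (F1) plays the role of the braid relations and (F2) the role of the coherence needed for a well-defined quotient admitting a length/degree function.
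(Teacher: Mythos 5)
Your construction of the category and of the degree functor coincides with the paper's: both take the free (path) category on the underlying directed graph and quotient by the congruence generated by $xy\sim\phi(xy)$ (a single application of this relation is what the paper calls ``diverting over $\phi$''), and both use the fact that such a move preserves the colour count of a path to descend $d$ to the quotient. Where you genuinely diverge is in the factorisation property. The paper argues by an explicit normal form plus case analysis: it first shows that every class has a ``rainbow'' representative with colours grouped in a fixed decreasing order, invoking (F2) once to reverse a tricoloured subpath consistently, and then factors by hand according to how many colours occur (Cases 1, 2.1, 2.2, 3, and a reduction for four or more colours, with details omitted in Case 3). You instead package the argument as a terminating rewriting system, reduce global confluence to local confluence via Newman's lemma, and identify (F2) as the resolution of the critical pair on a tricoloured subpath. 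This reorganisation is legitimate and in some ways more transparent: it isolates (F2) as exactly a critical-pair (hexagon) condition and treats all numbers of colours uniformly, where the paper's higher-colour cases are sketched.

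Two points in your confluence step are, however, not in order as written. First, your claim that an overlap of two rewrite windows on a subpath $xyz$ forces three distinct colours is justified incorrectly: $Y$ excludes only monochromatic length-two paths, so for $x,z\in E_i$ and $y\in E_j$ with $i\neq j$ both $xy$ and $yz$ lie in $Y$ and both admit moves. This two-colour overlap is a genuine critical pair of the symmetric relation; it disappears only after you orient the rewriting (say, swap $xy\mapsto y'x'$ only when the colour of $x$ precedes that of $y$ in your chosen order), since then exactly one of the two windows of $xyz$ is reducible. You gesture at such an orientation but never fix it, and without it neither the termination claim nor the critical-pair inventory stands, because the unoriented moves are involutive ($\phi^2=1$) and the system does not terminate. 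Second, unique normal forms do not by themselves give unique factorisation for an arbitrary splitting $d(\pi)=m+n$: the normal form is colour-sorted while the required $\mu$ of degree $m$ generally mixes colours, so one must still show that the cut-adapted representative, and hence the pair $(\mu,\nu)$, is forced --- for instance by rewriting relative to the cut position, or by induction on $|m|$ using the length-two rigid case. This is precisely the part the paper handles by its case analysis (and where it too is terse), so your ``the endpoint is forced'' needs to be replaced by an actual argument before the uniqueness half of the factorisation property can be considered proved.
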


 \begin{proof}
 First, let $\C:=\C_{\operatorname{DG}}$ be  the free category associated to the directed graph $\operatorname{DG}=(V,E,o,t)$, where we disregard the colouring of the edges, see e.g. \cite[Theorem 1, page 49]{MacL}.  The object set of $\C$ is $V$ and the arrows are given by finite strings, or paths, consisting of finite sequences $v_1,\dots,v_m$ of objects connected by  $m-1$  arrows $x_s:v_s\to v_{s+1}$ with the  compatibility of  objects $o(x_{s+1})=t(x_{s})$ for $1\leq s\leq m-1$, where $m> 1$ is arbitrary.

 To conform later with the conventions of higher rank graphs, we assign length $0$ to identity morphisms, or arrows $\langle v_1\rangle$, as opposed to length $1$  in \cite{MacL}. An arrow $\langle v_1,x_1,v_2\rangle$ of length $1$ will be called an \emph{elementary} arrow, where we again shift the value of the length by $-1$ compared to \cite{MacL}. For $m>1$, an arrow $A:=\langle v_1,x_1,v_2,\cdots ,v_{m-1}, x_{m-1},v_m\rangle$ is determined by the finite path $x_1x_2\cdots x_{m-1}$ of length $m-1$ on the graph, and we shall often regard it as such, by disregarding the contribution of the objects in the notation. In the category, the same arrow is equal to a composition
  \begin{equation}\label{eq: formal composition of morphisms on edges}
  A=A_{m-1}\circ A_{m-2}\circ\cdots \circ A_{2}\circ A_1
 \end{equation}
 of $m-1$ elementary arrows $A_s=\langle v_s,x_s,v_{s+1}\rangle$, $1\leq s\leq m$. For short, we write it as $x_{m-1}\circ \cdots \circ x_2\circ x_1.$ The category underlying the $k$-graph  will be obtained as a quotient category of $C$, as in \cite[Proposition 1, page 51]{MacL}.

  We let $\C_Y$ be the collection of arrows in $\C$ arising from paths on the digraph $DG$ belonging to $Y$,
 \begin{equation}\label{eq:def C_Y}
 \C_Y=\{\langle v,x,u,y,w\rangle\mid v,u,w\in V, x\in E_i, y\in E_j, 1\leq i\neq j\leq k \}.
 \end{equation}
 We refer to elements in $\C_Y$ as \emph{bicoloured} arrows.

    For objects $v,w$ in $\C$, we let $\C(v,w)$ be the set of morphisms in $\C$ which are arrows from $v$ to $w$. For each pair of objects $v,w\in \C$, we will define a binary relation $\mathcal{R}_{v,w}$ on the set of morphisms $\C(v,w)$. For this purpose, we introduce the following terminology: Given two arrows $A,A'$ in $\C$, we say that $A'$ is obtained from $A$ by \emph{diverting in $\C_Y$ over $\phi$} if  we have
  \begin{equation}\label{eq:generic arrow A}
  A:=\langle v_1,x_1,v_2,\cdots ,v_s, x_s, v_{s+1},x_{s+1},v_{s+2},\dots, v_{m-1}, x_{m-1},v_m\rangle
  \end{equation}
  and
  \begin{equation}\label{eq:generic arrow A' diverted from A}
  A':=\langle v_1,x_1,v_2,\cdots ,v_s, x_{s+1}', v_{s+1}',x_{s}',v_{s+2},\dots, v_{m-1}, x_{m-1},v_m\rangle
  \end{equation}
  with $\langle v_s, x_s, v_{s+1},x_{s+1},v_{s+2}\rangle, \langle v_s, x_{s+1}', v_{s+1}',x_{s}',v_{s+2}\rangle \in \C_Y$ and $x_{s+1}'x_{s}'=\phi(x_sx_{s+1})$, for some $1\leq s\leq m-1$ and $m>1$. Note that this makes sense on arrows of length at least $2$. Note also that diverting $A$ to $A'$ over $\phi$ does not change the number of edges of given colour in $A$. Further, since $\phi^2=1$, if $A'$ is obtained from $A$ by {diverting over $\phi$}, then also $A$ is obtained from $A'$ by {diverting in $\C_Y$ over $\phi$}: simply use that $x_{s}x_{s+1}=\phi(x_{s+1}'x_{s}')$.

  Let $v,w\in \C$. For $A,A'\in \C(v,w)$ we let
  \[
  A\mathcal{R}_{v,w}A'
   \]
   if $A'=A$ or there is a finite sequence $A_0=A,A_1, \dots ,A_n=A'$ of elements in $\C$ so that $A_{p+1}$ for $1\leq p\leq n-1$ is obtained from $A_p$ by diverting over $\phi$. The number of edges of same colour stays the same in each $A_p$.

  We claim that $R_{v,w}$ is an equivalence relation. The reflexivity $A\mathcal{R}_{v,w}A$ is clear for each $A$.

  To see that the relation is symmetric,
  let $A_0=A,A_1, \dots ,A_n=A'$  be a sequence of morphisms in $C$ implementing the relation $A\mathcal{R}_{v,w}A'$. Then
  the sequence $A_0'=A', A_1'=A_{n-1},\dots, A_{n-1}'=A_1, A_n'=A_0$ will implement $A'\mathcal{R}_{v,w}A$.

   For transitivity, suppose that $A\mathcal{R}_{v,w}A'$ and $A'\mathcal{R}_{v,w} A''$, which we may assume are represented by the sequences
   \[
   A_0=A,A_1, \dots ,A_n=A', B_0=A',B_1,\dots, B_p=A''.
   \]
   Then it is clear that $A_0=A,A_1,\dots,A_n=A', A_{n+1}=B_1,A_{n+2}=B_2,\dots, A_{n+p}=B_p$ implements $A\mathcal{R}_{v,w}A''.$

   We next claim that the equivalence relation $\mathcal{R}$ is preserved by composition of morphisms. Suppose that $B\in \C(v',v)$ and $A\mathcal{R}_{v,w}A'$. Let $A_0=A,A_1, \dots ,A_n=A'$ implement the equivalence between $A,A'$, then
   $C_0=A_0\circ B, C_1=A_1 \circ B,\dots,C_n=A_n\circ B$ implements the relation
   \[
 (A\circ B)\mathcal{R}_{v,w}(A'\circ B).
   \]
   If now $D\in \C(w,w')$, then similarly we have $(D\circ A)\mathcal{R}_{v,w'}(D\circ A')$. In all, $\mathcal{R}$ is a congruence in the sense of \cite[Page 52]{MacL}. Hence there is a category $\C/\mathcal{R}$ with object set $V$, the object set of $\C$, and set of morphisms $(\C/\mathcal{R})(v,w)$ equal to the quotient $\C(v,w)/\mathcal{R}_{v,w}$, for $v,w\in V$. We denote the class in $\C(v,w)/\mathcal{R}_{v,w}$ of a morphism $A\in \C(v,w)$ by $\dot{A}$.

  Next we show existence of the functor $d:\C/\mathcal{R}\to \N^k$. For an object $v$ in $\C/\mathcal{R}$ we let $d(v)=0$.  For an elementary arrow ${A}$ in $\C$ with  $A\in \C(v,w)$ for $v,w\in V$, there is a unique colour $i\in \{1,\dots,k\}$ of the edge underlying the arrow. We set $d({A}):=e_i$.

   We extend this to an arbitrary arrow ${A}$ in $\C$
by
\[
d({A}):=e_{i_1}+e_{i_2}+\cdots+e_{i_n}\text{ if }A=\langle v_1,x_1,v_2,\cdots, x_{n},v_{n+1}\rangle, x_s\in E_{i_s}, 1\leq s\leq n,
\]
where $i_s$ are not necessarily  distinct. If two arrows $A,A'$ are given as in \eqref{eq:generic arrow A} and \eqref{eq:generic arrow A' diverted from A} with $A\mathcal{R}_{v,w}A'$, we have that $d(A)=d(A')$ because at the vertex $v_s$ where the path in $A$ is diverted we have $e_{i_s}+e_{i_{s+1}}=e_{i_{s+1}}+e_{i_s}$. Let
\begin{equation}\label{eq:functor H to Nk}
  n_K=\begin{cases}\#\{s\mid 1\leq s\leq n,i_s=K\}&\text{ if }K\in \{i_1,i_2,\dots,i_{n}\}\\
  0&\text{ if }K\not\in \{i_1,i_2,\dots,i_{n}\}
  \end{cases}
\end{equation}
for $K=1,\dots, k$. Then
\[
d(\dot{A}):=(n_1,n_2,\dots,n_k)\in \N^k
\]
is well-defined. Moreover, it satisfies $d(\dot{A}\dot{B})=d(\dot{A})+d(\dot{B})$ for $\dot{A}\in \C(v,w)/\mathcal{R}_{v,w}$ and $\dot{B}\in \C(v',v)/\mathcal{R}_{v',v}$. Likewise for composition on the right. This defines the functor $d:\C/\mathcal{R}\to \N^k$. We will show that $d$ enjoys the unique factorisation property required of a higher rank graph.

For this we fix an ordering of the colours and show, as an intermediate step, that every morphism $\dot{A}$ has a representative $A$ in $\C$ with all edges of the same colour grouped together. For simplicity of notation, we assume that the colours appear in the order $\{1,2,\dots,k\}$. We claim that each class $\dot{A}$ contains a representative
\begin{equation}\label{eq:rainbow form of colours}
A=\langle v_1,x_1,\cdots, x_{n_1},v_{n_1+1},x_{n_1+1},\cdots,x_{n_1+n_2},\cdots, v_m\rangle,
\end{equation}
where  $x_1,\dots,x_{n_1}$ are edges of colour $i_1$, followed by edges $x_{n_1+1},\dots, x_{n_1+n_2}$ of colour $i_2$, and so on, with $n_K$ edges of colour $i_K$ at the end, where $i_1>i_2>\cdots >i_K$ are distinct colours in $\{1,\dots,k\}$. Thus, the largest colour appears nearest to the origin of the path on the digraph that determines the arrow, and the colours appear in decreasing order along the path towards it terminus.

Note that \eqref{eq:rainbow form of colours} is trivially satisfied if $\dot{A}$ is the class of an arrow $A$ on the digraph that only follows one colour. Assume next that there are only two colours $i>j$ in a representative $A$ for $\dot{A}$.   Let $v_s$ be the first vertex at which we have a bicoloured path $\langle v_s, x_s,v_{s+1},x_{s+1},v_{s+2}\rangle$  with $x_s\in E_j,x_{s+1}\in E_i$. Then we divert this into the equivalent path $\langle v_s, x_{s+1}',v_{s+1}',x_{s}',v_{s+2}\rangle$ for unique $x_s'\in E_j$ and $x_{s+1}'\in E_i$. If there are no more edges from  $E_i$ past the vertex $v_{s+2}$, we are done. If not, continue the process until we are left with a representative for $\dot{A}$ of the form required in \eqref{eq:rainbow form of colours}.

Assume now that there are three distinct colours $l>j>i$ in a representative $A$ for $\dot{A}$. If all three colours appear in decreasing order $l,j,i$ in the path supporting $A$, there is nothing to prove. If two colours appear in reverse order at a time, we reduce to the previous case. Assume now that a tricoloured path appears in increasing order of colours. For simplicity of notation, we may assume that this is an arrow  in $\C$ of the form
\[
\langle v_1,x,v_2,y,v_3,z,v_4\rangle, v_1,\dots, v_4\in V, x\in E_i, y\in E_j, z\in E_l.
\]
This is the composition $\langle v_3,z,v_4\rangle \circ\langle v_2,y,v_3\rangle \circ \langle v_1,x,v_2\rangle$ (or $z\circ y\circ x$) of elementary arrows in colours $l>j>i$, and we claim that it is the same in $\C/\mathcal{R}$ as a (unique) arrow of the form
\begin{equation}\label{eq:three colours in increasing order in degree}
\langle v_1, \bar{z}, v_2',\bar{y}, v_3',\bar{x}, v_4\rangle, \bar{z}\in E_l, \bar{y}\in E_j, \bar{x}\in E_i,
\end{equation}
where $v_2',v_3'\in V$.  By successive application of this reversing of order, it will follow that $\dot{A}$ admits a representative with the colours appearing as in \eqref{eq:rainbow form of colours}.

Since $\langle v_1,x,v_2,y,v_3\rangle \in \C(v,w)\cap \C_Y$, there are unique $x^1\in E_i,y^1\in E_j$ such that
\[
\langle v_1,x,v_2,y,v_3\rangle \mathcal{R}_{v_1,v_3} \langle v_1, y^1, u_1, x^1,v_3\rangle,
\]
with $u_1=t(y^1)=o(x^1)$. For simplicity, write this as $(y\circ x) \mathcal{R}_{v_1,v_3} (x^1\circ y^1)$.  Continuing this way,  the bijection $\phi$ prescribes  edges $x_1, x_2,x^2\in E_i$, $y_1,y_2,y^2\in E_j$ and $z_1,z^1,z_2,z^2\in E_l$ such that
\begin{align*}
(z\circ x^1) &\mathcal{R}_{u_1,v_4}\, (x^2\circ z^1),\quad \text{ with }o(x^2)=t(z^1)=u_2\\
(z^1\circ y^1) \,&\mathcal{R}_{v_1,u_2}\,(y^2\circ z^2), \quad \text{ with }o(y^2)=t(z^2)=u_3\\
(z\circ y) \,&\mathcal{R}_{v_2,v_4}\,(y_1\circ z_1),\quad \text{ with }o(y_1)=t(z_1)=u_4\\
(z_1 \circ x)\,&\mathcal{R}_{v_1,u_4}\,(x_1\circ z_2),\quad \text{ with }o(x_1)=t(z_3)= u_5\\
(y_1\circ x_1)\, &\mathcal{R}_{u_3,v_4}\,(x_2\circ y_2),\quad \text{ with }o(x_2)=t(y_2)=u_6,
\end{align*}
where $u_1=o(z^1)$ and $u_4=t(x_1)$.
By our assumption (F2) on $\operatorname{DG}$, we have $u_2=u_6$ and $u_3=u_5$ and
\begin{align*}
\bar{x}:=x_2=x^2,\\
\bar{y}:=y_2=y^2,\\
\bar{z}:=z_2=z^2.
\end{align*}
With $v_2':=u_3$ and $v_3':=u_2$, this gives the claimed representative in \eqref{eq:three colours in increasing order in degree}, where we have used that the relation $\mathcal{R}$ preserves composition of morphisms. Successive applications of this reversing of order in a tricoloured path show that $\dot{A}$ admits a representative as in \eqref{eq:rainbow form of colours}. Note at the same time that the class in $\C/\mathcal{R}$ of the morphism $ \bar{x}\circ\bar{y}\circ \bar{z}$   decomposes uniquely as products of two morphisms along any choice in $\mathcal{R}_{v_1,v_4}$ which involves tricoloured paths. More precisely, the decompositions are
\begin{align}
z\circ(y\circ x)&\text{ corresponding to } e_l+(e_j+e_i)\label{eq:lji};\\
z\circ(x^1\circ y^1)&\text{ corresponding to } e_l+(e_i+e_j)\label{eq:lij};\\
\bar{x}\circ(z^1\circ y^1)&\text{ corresponding to } e_i+(e_l+e_j)\label{eq:ilj};\\
\bar{x}\circ(\bar{y}\circ\bar{z})&\text{ corresponding to } e_i+(e_j+e_l)\label{eq:ijl};\\
y_1\circ(x_1\circ\bar{z})&\text{ corresponding to } e_j+(e_i+e_l)\label{eq:jil};\\
y_1\circ(z_1\circ x)&\text{ corresponding to } e_j+(e_l+e_i)\label{eq:jli}.
\end{align}

If more than three colours appear in a representative $A$ for $\dot{A}$, say $i_1<i_2<\cdots <i_K$ with $K\geq 4$, and if a path supporting $A$ has colours in increasing order then we resort to the previous cases.  Thus, if $K=4$ and a path appears with colours in the order $i_1<i_2<i_3<i_4$,  we first reverse the path onto colours $i_1, i_4, i_3,i_2$, working from the terminus of the path (source of its arrow in the category) to the left towards its origin. Then we move the edges of colour $i_1$ past the ones of colours $i_4,i_3,i_2$, using the previous cases. Similarly for $K>4$. In all, \eqref{eq:rainbow form of colours} follows.

Now we are ready to prove the factorisation property of $d$. Suppose that $\dot{A}$ is in $\C/\mathcal{R}$ with $d(\dot{A})=(n_1,n_2,\dots,n_k)\in N^k$. We must show that whenever $(n_1,n_2,\dots,n_k)=(m_1,m_2,\dots,m_k)+(p_1,p_2,\dots,p_k)$ in $\N^k$ there are unique morphisms $\dot{B},\dot{C}$ in $\C/\mathcal{R}$ so that
\[
\dot{A}=\dot{B}\dot{C}, d(\dot{B})=(m_1,m_2,\dots,m_k)\text{ and }d(\dot{C})=(p_1,p_2,\dots,p_k).
\]

The proof is structured into cases determined by the number of non-zero entries $n_s$, that is, by the number of colours that appear in a morphism in the class $\dot{A}$.

\noindent{{Case 1: single colour.}} Thus $n_i>0$ for a unique $1\leq i\leq k$. If $n_i=1$, then by our construction of $\C/\mathcal{R}$ and $d$ we know that $\dot{A}$ is the class of an elementary arrow
$A=\langle v_1,x_1,v_2\rangle$ with $x_1\in E_i$ and only the trivial decomposition involving identities at $v_1,v_2$ is possible. If $n_i>1$, then a representative for $A$ consists of a path of length $n_i$ along edges in $E_i$, and so we can decompose $\dot{A}=\dot{B}\dot{C}$ with $d(\dot{B})=m_i, d(\dot{C})=p_i$ for any choice $n_i=m_i+p_i$ in $\N$.

\noindent{{Case 2: two colours.}} By our earlier claim \eqref{eq:rainbow form of colours}, we may assume that $d(\dot{A})=(n_1,n_2,\dots,n_k)$ with $n_i\geq1, n_j\geq 1$ at  $i>j$ in $\{1,\dots, k\}$ and $n_l=0$ at all other entries.

\noindent{{Case 2.1: $n_i=n_j=1$}}.
By definition of $\C/\mathcal{R}$, the morphism $\dot{A}$ is the class of a bicoloured arrow $\langle v_1,x_1,v_2,x_2,v_3\rangle$, with $x_1\in E_i$ and $x_2\in E_j$. Let $x_2'\in E_j$ and $x_1'\in E_i$ so that $\phi(x_1x_2)=x_2'x_1'$, and put $v_2'=t(x_2')=o(x_1')$. Then
\[
A=\langle v_2', x_1',v_3\rangle\langle v_1,x_2',v_2'\rangle=\langle v_2,x_2,v_3\rangle \langle v_1,x_1,v_2\rangle
\]
give the unique decompositions $\dot{A}=\dot{B}\dot{C}$ according to the decompositions  $e_i+e_j$ and $e_j+e_i$  of $d(\dot{A})$.

\noindent{{Case 2.2: $n_i>1, n_j\geq 1$}}. Without loss of generality, we may assume that $n_i\geq n_j$.

The given morphism $\dot{A}$ is the class of an arrow following a path with $n_i$ edges of colour $i$ and $n_j$ edges of colour $n_j$.    Let $n_i=m_i+p_i$ and $n_j=m_j+p_j$, and consider first the case that $m_i,p_j\geq 1$. We then divert the path in  $A$ after $p_i+m_i-1$ edges of colour $i$ by applying case 2.1 to obtain a representative for $\dot{A}$ in the form of a path with $p_i+m_i-1$ edges of colour $i$, then one edge of colour $j$ followed by an edge of colour $i$, and finally with $p_j+m_j-1$ edges of colour $j$ at the end. If $p_i+m_i-2=0$ and $p_j+m_j-2=0$, we are done, having recovered case 2.1. Otherwise, if for example $p_i+m_i-2\geq 1$, then apply case 2.1 to divert an edge of colour $j$ onto one of colour $i$, thus obtaining a representative with $p_i+m_i-2$ edges of colour $i$, then one edge of colour $j$ followed by two of colour $i$, and finally $m_j+p_j-1$ edges of colour $j$. If also $m_j+p_j-2\geq 1$, we divert another edge of colour $j$ successively past the two of colour $i$ to get $p_i+m_i-2$ of colour $i$, two of colour $j$, two of colour $i$, and finally $p_j+m_j-2$ of colour $j$.  Continuing this process, we divert all $m_i$ edges of colour $i$ followed by the $p_j$ edges of colour $j$ into a path where there are $p_j$ edges of colour $j$ first, followed by $m_i$ edges of colour $i$. This determines the required decomposition of $\dot{A}=\dot{B}\dot{C}$, where
$d(\dot{B})=m_ie_i+m_je_j$ and $d(\dot{C})=p_ie_i+p_je_j$.

The remaining cases where only two colours are present are treated similarly.

\noindent{{Case 3: three colours.}}  By our earlier claim, we may assume that $d(\dot{A})=(n_1,n_2,\dots,n_k)$ with $n_i\geq 1, n_j\geq  1, n_l\geq 1$ at $i>j>l$ in $\{1,\dots, k\}$ and $n_h=0$ at all other entries $h\in \{1,\dots,k\}$. Assuming $n_i=m_i+p_i$, $n_j=m_j+p_j$ and $n_l=m_l+p_l$, then by considering cases as before and applying the factorisations \eqref{eq:lji}-\eqref{eq:jli} accordingly results in a factorisation  $\dot{A}=\dot{B}\dot{C}$ with $d(\dot{B})=m_ie_i+m_je_j+m_le_l$ and $d(\dot{C})=p_ie_i+p_je_j+p_le_l$. We omit the details.

In the general case where $d(\dot{A})$ has nonzero entries in more than three distinct colours $i_1>i_2>\cdots >i_K$, $K\geq 4$, the problem reduces to decomposing along any expression of $(n_1,n_2,\dots,n_K)$ where at least one of the entries $n_{i_s}$, $s=1,\dots, K$, is expressed as a sum  $m_{i_s}+p_{i_s}$ for $m_{i_s},p_{i_s}\geq 1$. This reduces to cases 2 and 3.
 \end{proof}

We  remark that an alternative argument to construct a higher rank graph from the input of a $k$-dimensional digraph could be obtained by appealing to \cite{HRSW}. The main step is to identify  a complete collection of squares that is associative based on $\operatorname{DG}$ and then an application of \cite[Theorem 4.4]{HRSW} provides the desired $k$-graph.

\begin{remark}  It is possible to express the bijection $\phi$ using notation from \cite{KP}. If $C$ and $D$ are directed $1$-graphs with common set of vertices $V=C^0=D^0$, distinct sets of edges $C^1$, $D^1$, and commuting vertex matrices,  let
\[
C^1\ast D^1=\{(x,y)\in C^1\times D^1\mid t(x)=o(y)\}.
\]
Then the bijection $\phi$ in (F1) is given by its restrictions $\phi_{i,j}:E_i^1\ast E_j^1\to E_j^1\ast E_i^1$, for all  $i\neq j$ in $\{1,2,\dots,k\}$. This construction is reminiscent of an older idea of a product of two (possibly directed) graphs, as described in Ore's monograph \cite{Ore}.
\end{remark}

We note that condition (F2) in Definition~\ref{def:k digraph from coloured graph} is vacuous when $k=2$. We shall  refer to $E=E_1\sqcup E_2\sqcup \dots \sqcup E_k$ as the \emph{$1$-skeleton} of $\Lambda_{\operatorname{DG}}$.

\begin{theorem}\label{thm:one vertex k-digraph from k-cube}
For any  complex $\mathcal{X}$ with universal cover equal to the product of $k$ regular trees, where $k\geq 2$,  there is a  $k$-dimensional digraph $\operatorname{DG}(\mathcal{X})$ defined by sending a vertex of $\mathcal{X}$ to a vertex in $\operatorname{DG}(\mathcal{X})$,  and by sending  each geometric edge in $\mathcal{X}$ to two edges in the edge set of $\operatorname{DG}(\mathcal{X})$, of same colour and opposite orientations.
\end{theorem}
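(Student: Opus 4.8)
The plan is to build $\operatorname{DG}(\mathcal{X})$ explicitly from the combinatorial data of the $k$-cube complex $\mathcal{X}$ and then verify the two axioms (F1) and (F2) of Definition~\ref{def:k digraph from coloured graph}. First I would set $V$ to be the vertex set of $\mathcal{X}$ and, for each geometric edge of $\mathcal{X}$ of colour $i$, introduce two edges in $E_i$ carrying the two orientations $e$ and $e^{-1}$, so that $o,t$ are read off from the endpoints of the geometric edge and $E=E_1\sqcup\dots\sqcup E_k$ is the promised colour decomposition (edges of $\mathcal{X}$ inherit a colour from the $k$-cube structure, as recalled in the Preliminaries). The key is to define the involution $\phi$ on the set $Y$ of bicoloured length-two paths: given a path $xy$ with $x$ of colour $i$ and $y$ of colour $j$, the concatenation $xy$ is a vertical-horizontal pair of edges spanning a unique geometric square $S^{ij}$ in $F(i,j)$ (this uses property~(3) of the $k$-cube structure, i.e. $A_iA_j=A_jA_i$ with the expected cardinality, which guarantees existence and uniqueness of the completing square); reading the opposite two sides of that same square gives a horizontal-vertical pair $y'x'$ with $y'$ of colour $j$, $x'$ of colour $i$, and with the same origin and terminus as $xy$. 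Set $\phi(xy):=y'x'$. Since passing to the opposite pair of sides is an involution on the four sides of a square (compare the equivalence \eqref{eq:4 relations on a and b}), we get $\phi^2=1$ directly, and the endpoint and colour bookkeeping is exactly condition~(F1).

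Next I would check (F1) is genuinely well-defined and bijective: well-definedness is the statement that every bicoloured path lies in a \emph{unique} geometric square of the appropriate colour pair, which is precisely the content of the sizes $\#A_i\cdot\#A_j$ in Definition~\ref{defi:cubestructure}(3) — there is no collision and no deficiency — and bijectivity of $\phi$ then follows since $\phi$ is its own inverse. I would also remark that in the one-vertex case this is literally the passage $ab=b'a'$ of \eqref{eq:relationabba}, and that the general (several-vertex) case is identical once one keeps track of the origin/terminus vertices, which the square already records.

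The substantive part is verifying the associativity axiom (F2), and this is where I expect the main obstacle. Given three pairwise-distinct colours $i,j,l$ and a tricoloured path $xyz$, the six relations in (F2) are exactly the statements that the six faces of a geometric $3$-cube in $\mathcal{X}$ fit together consistently — concretely, they are the six square relations $S_1^{ij},\dots,S_6^{ij}$ displayed in the Preliminaries (front/right/bottom/top/left/back), read in the two orders corresponding to the two ways of traversing the cube from one corner to the antipodal corner. The required conclusion $x_2=x^2$, $y_2=y^2$, $z_2=z^2$ is then the assertion that the $3$-cube of $\mathcal{X}$ containing the initial $2$-cells $S_1^{ij}$ (spanned by $xy$), $S_2^{il}$ (spanned by $x^1z$ after the first diversion) and $S_3^{jl}$ (spanned by $yz$) is \emph{unique}, so that both traversals land on the same back face. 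I would obtain this from the $k$-cube structure: the three initial squares determine three of the edges emanating from a vertex together with their pairwise completing squares, and property~(3) applied to the three relevant pairs $(A_i,A_j)$, $(A_i,A_l)$, $(A_j,A_l)$ forces the remaining three edges and three squares; alternatively, one invokes the inductive construction of the $k$-cube complex $P_{(A_1,\dots,A_k)}$ from \cite[Definition 2.3]{MRV}, in which the $3$-cells are defined precisely so that each compatible triple of $2$-cells spans exactly one $3$-cube. The same associativity is, as the authors note, a manifestation of a solution to the Yang–Baxter equation; the cleanest write-up is to phrase (F2) as "the two reduced-word computations $z\circ(y\circ x)$ and (after reversing) $\bar x\circ(\bar y\circ\bar z)$ agree in the group $G_{i,j,l}$ generated by $A_i\cup A_j\cup A_l$", which holds because both sides equal the same group element and the word-length/colour bookkeeping pins down the intermediate edges uniquely. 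Finally, for $l\geq 4$ there is nothing new to check, since (F2) only refers to triples of colours, so once the $3$-cubes are consistent the definition goes through for all $k\ge 2$.
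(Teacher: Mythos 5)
Your construction and overall strategy coincide with the paper's: vertices and doubled oriented edges coming from the geometric edges, (F1) from the unique geometric square completing a bicoloured corner, and (F2) from the unique geometric $3$-cube containing a tricoloured path, with the two chains of squares producing the same antipodal path. One step deserves a warning, however. Your first justification of cube-closure --- that property (3) of Definition~\ref{defi:cubestructure} applied to the three pairs $(A_i,A_j)$, $(A_i,A_l)$, $(A_j,A_l)$ ``forces the remaining three edges and three squares'' --- is not sufficient on its own. Conditions (1)--(4) only produce, for each bicoloured corner, a unique completing square; they do not guarantee that the three squares attached near the far corner of the would-be cube share edges consistently, so that a single antipodal vertex with three edges into it exists. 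That consistency is precisely the content of (F2), so an argument resting only on the pairwise square-completion is circular. The paper instead invokes condition (5) of Definition~\ref{defi:cubestructure} (equivalently, the hypothesis that the universal cover is a product of $k$ trees): this is what guarantees that every tricoloured path lies in a unique geometric $3$-cube, inside which the opposite path at maximal distance is unique, whence $x_2=x^2$, $y_2=y^2$, $z_2=z^2$. Your two alternative justifications --- the inductive construction of the complex in \cite{MRV}, and unique factorisation of the group element $xyz$ into one letter of each colour --- do carry this geometric input and can be made rigorous, but note that the group-element version is only available in the one-vertex case; the formulation via the unique $3$-cube is the one that extends verbatim to covers with $N\geq 2$ vertices, where edges no longer carry group labels.
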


We prove this theorem in stages. First we prove the one-vertex case for $k=2$, where the statement is a consequence of the description of a one-vertex $2$-complex as $S_{A,B}$ for a VH-structure $A,B$. Then we prove the case $k=3$ by employing geometric cubes. First we record a key consequence whose proof is immediate from Theorem~\ref{thm:the category associated to a digraph}
 and Theorem~\ref{thm:one vertex k-digraph from k-cube}.

\begin{corollary}\label{cor:the one vertex k-graph associated to a digraph}
Given $\mathcal{X}$ a $k$-cube complex with $k\geq 2$,  there is a $k$-graph $\Lambda_{\operatorname{DG}(\mathcal{X})}$ with vertex set equal to the vertex set of $\mathcal{X}$ and whose $1$-skeleton contains two edges for each geometric edge of $\mathcal{X}$, in a colour preserving and orientation reversing assignment. Further, $\Lambda_{\operatorname{DG}(\mathcal{X})}$ is row-finite.
\end{corollary}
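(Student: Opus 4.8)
The plan is to obtain $\Lambda_{\operatorname{DG}(\mathcal{X})}$ by composing the two preceding theorems and then to verify row-finiteness by a simple counting argument. First I would invoke Theorem~\ref{thm:one vertex k-digraph from k-cube}: applied to the $k$-cube complex $\mathcal{X}$ it produces a $k$-dimensional digraph $\operatorname{DG}(\mathcal{X})=(V,E,o,t,k,\phi)$ in the sense of Definition~\ref{def:k digraph from coloured graph}, with $V$ the vertex set of $\mathcal{X}$ and $E=E_1\sqcup\cdots\sqcup E_k$, where $E_i$ contains exactly two edges --- carrying the two opposite orientations --- for each geometric edge of $\mathcal{X}$ of colour $i$. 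In particular $V$ and $E$ are finite, as is required of a $k$-dimensional digraph.

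Next I would feed $\operatorname{DG}(\mathcal{X})$ into Theorem~\ref{thm:the category associated to a digraph}, which yields a small category $\Lambda_{\operatorname{DG}(\mathcal{X})}$ with object set $V$ together with a degree functor $d\colon\Lambda_{\operatorname{DG}(\mathcal{X})}\to\N^k$ satisfying the unique factorisation property; thus $(\Lambda_{\operatorname{DG}(\mathcal{X})},d)$ is a higher rank graph. By construction its vertex set $\Lambda^0$ is $V$, i.e. the vertex set of $\mathcal{X}$, and its $1$-skeleton is $E=E_1\sqcup\cdots\sqcup E_k$, which by the previous paragraph contains precisely two edges of colour $i$ per geometric edge of colour $i$ in $\mathcal{X}$, assigned in a colour-preserving and orientation-reversing manner. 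This establishes the stated correspondence.

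For row-finiteness one must check that $\#v\Lambda^n<\infty$ for all $v\in\Lambda^0$ and $n\in\N^k$. Set $|n|=n_1+\cdots+n_k$. By the construction in the proof of Theorem~\ref{thm:the category associated to a digraph}, every morphism of degree $n$ with range $v$ is represented by a directed path of length $|n|$ on the finite digraph $\operatorname{DG}(\mathcal{X})$; since a finite directed graph has only finitely many directed paths of any given finite length, $v\Lambda^n$ is finite. (Equivalently, $\#v\Lambda^{e_i}\le\#E_i$ and one inducts on $|n|$ via the factorisation property, obtaining $\#v\Lambda^n\le\prod_{i=1}^k(\#E_i)^{n_i}$.) Hence $\Lambda_{\operatorname{DG}(\mathcal{X})}$ is row-finite. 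There is no genuine obstacle in this argument; the single point that deserves mention is that one uses the built-in finiteness of $V$ and $E$ in a $k$-dimensional digraph to upgrade the $k$-graph furnished by Theorem~\ref{thm:the category associated to a digraph} to a row-finite one.
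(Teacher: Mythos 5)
Your proposal is correct and follows exactly the route the paper intends: the paper declares the corollary ``immediate'' from Theorem~\ref{thm:one vertex k-digraph from k-cube} followed by Theorem~\ref{thm:the category associated to a digraph}, which is precisely your composition of the two results. Your explicit verification of row-finiteness via the finiteness of $V$ and $E$ in the $k$-dimensional digraph is a detail the paper leaves implicit, and it is the right argument.
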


\begin{definition}\label{def:the k-graph C star algebra of P}
Given a $k$-cube complex $\mathcal{X}$, its associated $C^*$-algebra is the higher rank graph $C^*$-algebra $C^*(\Lambda_{\operatorname{DG}(\mathcal{X})})$.
\end{definition}
To simplify the notation we write  $\Lambda(\mathcal{X})$ in place of  $\Lambda_{\operatorname{DG}(\mathcal{X})}$. Continuing our convention from the proof of Theorem~\ref{thm:the category associated to a digraph},  we use letters such as $x,y$ to denote both generic elements in the edge set $E$ of $\operatorname{DG}(\mathcal{X})$ and their corresponding morphisms in $\Lambda(\mathcal{X})$ associated to elementary arrows. In particular, a bicoloured path $xy$ on the digraph with $x\in E_i, y\in E_j$ for distinct colours $i\neq j$ will be $y\circ x$ as  composition as morphisms in the $k$-graph.

As another point of notation, in the proof of the next result and that of Theorem~\ref{thm:one vertex k-digraph from k-cube} in the one-vertex case, we shall distinguish between labels such as $a$ in  a generating subset $A$ of a $k$-cube group $G$ and the label of the edge it defines in the associated $k$-digraph, for which we reserve notation of the form $\mathfrak{a}$. In the group we have $aa^{-1}=1=1_G$, so in the one-vertex complex $a^{-1}$ means orientation reversal of the edge labelled with $a$, while in the $k$-dimensional digraph the edge labelled $a$ is sent into distinct directed edges $\mathfrak{a}_1$, $\mathfrak{a}_2$ (with no cancellation inherited from the group).

\begin{lemma}\label{prop:2-graph from VH structure} Assume that $S_{A, B}$ is a one-vertex square complex with associated group $G$ given by a VH-structure $(A,B)$ with $\#A$ and $\#B$ both even positive integers. Suppose that
\begin{equation}\label{eq: A B expanded notation}
A=\{a_1,\dots,a_L,a_{L+1},\dots,a_{2L}\}\text{ and }B=\{b_1,\dots,b_K, b_{K+1},\dots ,b_{2K}\},
\end{equation}
with $a_ra_{L+r}=1$ in $G$ for all $r=1,\dots,L$ and $b_sb_{K+s}=1$ in $G$ for all $s=1,\dots,K$, with $K,L\geq 1$. In particular, for each $r=1,\dots,L$, we have that $a_r$ and $a_{L+r}$ label the same geometric edge in $S_{A,B}$, but with opposite orientations. Similarly for $b_s$, $b_{K+s}$ with $s=1,\dots,K$.

 Then there is a  $2$-dimensional digraph in the sense of Definition~\ref{def:k digraph from coloured graph} with edge set $E(S_{A,B})=E_1\sqcup E_2$ obtained by associating to each $a_r$ for $r=1,\dots, 2L$ a directed edge $\mathfrak{a}_r$ in $E_1$, and to each $b_s$ for  $s=1,\dots,2K$  a directed edge $\mathfrak{b}_s$ in $E_2$.
\end{lemma}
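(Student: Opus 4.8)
The plan is to realise $\operatorname{DG}(S_{A,B})$ as a one-vertex digraph and to read off the bijection $\phi$ directly from the geometric squares of $S_{A,B}$ recorded in \eqref{eq:4 relations on a and b}. Concretely, I would take $V=\{v\}$ a single vertex, declare every edge to be a loop (so $o=t$ is the constant map to $v$), and set $E_1=\{\mathfrak{a}_1,\dots,\mathfrak{a}_{2L}\}$, $E_2=\{\mathfrak{b}_1,\dots,\mathfrak{b}_{2K}\}$ and $E=E_1\sqcup E_2$. Since there is only one vertex, the set $Y$ of bicoloured length-two paths splits as $Y=Y_{(1,2)}\sqcup Y_{(2,1)}$, with $Y_{(1,2)}=\{\mathfrak{a}_r\mathfrak{b}_s : 1\le r\le 2L,\ 1\le s\le 2K\}$ and $Y_{(2,1)}=\{\mathfrak{b}_s\mathfrak{a}_r\}$, each of cardinality $4LK$; the origin/terminus matching required in (F1) is then automatic. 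So the only real content is the construction of $\phi$ and the verification of its properties.

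For the definition of $\phi$ I would invoke property (3) of the VH-structure $(A,B)$ in Definition~\ref{defi:BMstructureingroup}: since $\#(AB)=\#A\cdot\#B=\#(BA)$ and $AB=BA$, the map $A\times B\to AB=BA\to B\times A$ sending $(a,b)$ to the unique $(b',a')$ with $ab=b'a'$ is a bijection $F_{12}$, and symmetrically one obtains a bijection $F_{21}\colon B\times A\to A\times B$ from $ba=a'b'$. I would then put $\phi(\mathfrak{a}_r\mathfrak{b}_s)=\mathfrak{b}_{s'}\mathfrak{a}_{r'}$ whenever $a_rb_s=b_{s'}a_{r'}$ in $G$ — equivalently, whenever $S_O^{a_r,b_s}=(a_r,b_s,a_{r'}^{-1},b_{s'}^{-1})$ is the corresponding geometric square, so that $\phi$ sends the vertical–horizontal pair of $S_O^{a_r,b_s}$ to its horizontal–vertical pair — and symmetrically $\phi(\mathfrak{b}_s\mathfrak{a}_r)=\mathfrak{a}_{r'}\mathfrak{b}_{s'}$ whenever $b_sa_r=a_{r'}b_{s'}$.

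Then I would verify (F1) and (F2). For (F1): $\phi$ is a bijection $Y\to Y$ because $\phi|_{Y_{(1,2)}}=F_{12}$ and $\phi|_{Y_{(2,1)}}=F_{21}$ are bijections interchanging $Y_{(1,2)}$ and $Y_{(2,1)}$, and each interchanges the roles of the two colours exactly as (F1) demands; and $\phi^2=1$ because starting from $a_rb_s=b_{s'}a_{r'}$, this same relation read as $b_{s'}a_{r'}=a_rb_s$ exhibits $(a_r,b_s)$ as the (unique, since $\#(AB)=\#A\cdot\#B$) preimage of $(b_{s'},a_{r'})$, so $F_{21}\circ F_{12}=\mathrm{id}$ and symmetrically $F_{12}\circ F_{21}=\mathrm{id}$ — in geometric terms, reversing the reading direction of a geometric square of $S_{A,B}$, which is one of the four equivalent relations in \eqref{eq:relationabba}, again yields a geometric square. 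Condition (F2) refers to three pairwise distinct colours $i,j,l$ and is therefore vacuous when $k=2$, as already noted after Theorem~\ref{thm:the category associated to a digraph}.

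I do not expect a serious obstacle here: the whole argument amounts to translating property (3) of a VH-structure into the bijection $\phi$ and reading $\phi^2=1$ off the symmetry of the defining relations. The one point deserving care is the bookkeeping of the involutions $a_r\leftrightarrow a_{L+r}$ and $b_s\leftrightarrow b_{K+s}$: the edge labels $\mathfrak{a}_r,\mathfrak{b}_s$ in the digraph must not inherit the cancellations $a_ra_{L+r}=1$, $b_sb_{K+s}=1$ valid in $G$, so one must keep the group-theoretic and digraph-theoretic meanings of these symbols rigorously separate, exactly as flagged in the paragraph preceding the lemma.
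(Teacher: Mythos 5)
Your proposal is correct and follows essentially the same route as the paper: both read $\phi$ off the unique factorisation $a_rb_s=b_{m(r,s)}a_{l(r,s)}$ guaranteed by property (3) of the VH-structure, i.e.\ off the geometric squares of $S_{A,B}$. You are in fact slightly more explicit than the paper in treating the two halves $Y_{(1,2)}$ and $Y_{(2,1)}$ separately and in verifying $\phi^2=1$ via the uniqueness of the factorisations in $AB$ and $BA$, which the paper leaves implicit.
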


\begin{proof} We have that for each $a_r\in A$ and  $b_s\in B$, with $r=1,\dots, 2L$ , $s=1,\dots, 2K$, there are unique $a_{l(r,s)}\in A$ and $b_{m(r,s)}\in B$, with $l(r,s)\in\{1,\dots, 2L\}$ and $m(r,s)\in\{1,\dots, 2K\}$, such that
\[
a_rb_s=b_{m(r,s)}a_{l(r,s)}.
\]
In particular, $a_rb_s$ is contained as a vertical-horizontal pair of edges in a unique geometric square in the family
\[
S_*^{a_r,b_s}, r=1,\dots, 2L, s=1,\dots ,2K, *=O,H, V,R,
\]
with $b_{m(r,s)}a_{l(r,s)}$ forming the  horizontal-vertical pair of edges in $S_*^{a_r,b_s}$ starting and ending at the same vertices as $a_rb_s$ (which we recall coalesce to the single vertex $v$ of $S_{A,B}$).

Let $xy$ be a path of length two with $x\in E_1$ and $y\in E_2$. Then $xy$  is uniquely determined by $x=\mathfrak{a}_r$ for some $r=1,\dots, 2L$ and $y=\mathfrak{b}_s$ for some $s=1,\dots,2K$. Let now $a_r,b_s, a_{l(r,s)}$ and $b_{m(r,s)}$ be as above. Then $y':=\mathfrak{b}_{m(r,s)}\in E_2$ and $x':=\mathfrak{a}_{l(r,s)}\in E_1$ determine a unique path of length two $y'x'$ so that $xy\sim y'x'$.

This defines the required bijection $\phi:Y\to Y$  with $\phi(xy)=y'x'$ on the set $Y$ of all paths of length two of distinct colours.
\end{proof}

\begin{example}\label{ex:complex from torus}
A simple construction of a $2$-graph based on the procedure of Corollary~\ref{cor:the one vertex k-graph associated to a digraph} recovers a known example, see \cite{Pow}, \cite{Y} and \cite[Example 11.1(1)]{LV}, where $\theta(i,j)= (i,j)$ is the identity permutation of the set $\{1,2\}\times \{1,2\}$. Consider the $(2,2)$-group  $G=\Z\times \Z$ with generating sets $A=\{a,a^{-1}\}$ corresponding to the first copy of $\Z$ and $B=\{b, b^{-1}\}$ for the second copy. We have the commutation relation $ab=ba$ as the basis for a geometric square $S_O^{a,b}$. The one-vertex complex has two loops. An application of Lemma~\ref{prop:2-graph from VH structure} yields a  $2$-dimensional digraph with edge set a disjoint union of $E_1=\{\mathfrak{a}_1,\mathfrak{a}_2\}$ and $E_2=\{\mathfrak{b}_1,\mathfrak{b}_2\}$, thus four loops, with the  bijection $\phi$ on the set of paths of length two of distinct colours read off from $S_O^{a,b}, S_H^{a,b}, S_V^{a,b}$ and $S_R^{a,b}$ as follows:
\[
\mathfrak{a}_1\mathfrak{b}_1\sim\mathfrak{b}_1\mathfrak{a}_1, \mathfrak{a}_1\mathfrak{b}_2\sim\mathfrak{b}_2\mathfrak{a}_1, \mathfrak{a}_2\mathfrak{b}_1\sim\mathfrak{b}_1\mathfrak{a}_2\text{ and }\mathfrak{a}_2\mathfrak{b}_2\sim\mathfrak{b}_2 \mathfrak{a}_2.
\]
\end{example}

\begin{example}\label{ex:group 37} We now present a $2$-graph from this recipe where the group $G$ is not of product type. As we will  explain, figure~\ref{figure four squares} shows an example of a $(4,4)$-group $G$, cf. \cite{vdovina-YB}.

\begin{figure}[h]
\centering
\includegraphics[height=2.7cm]{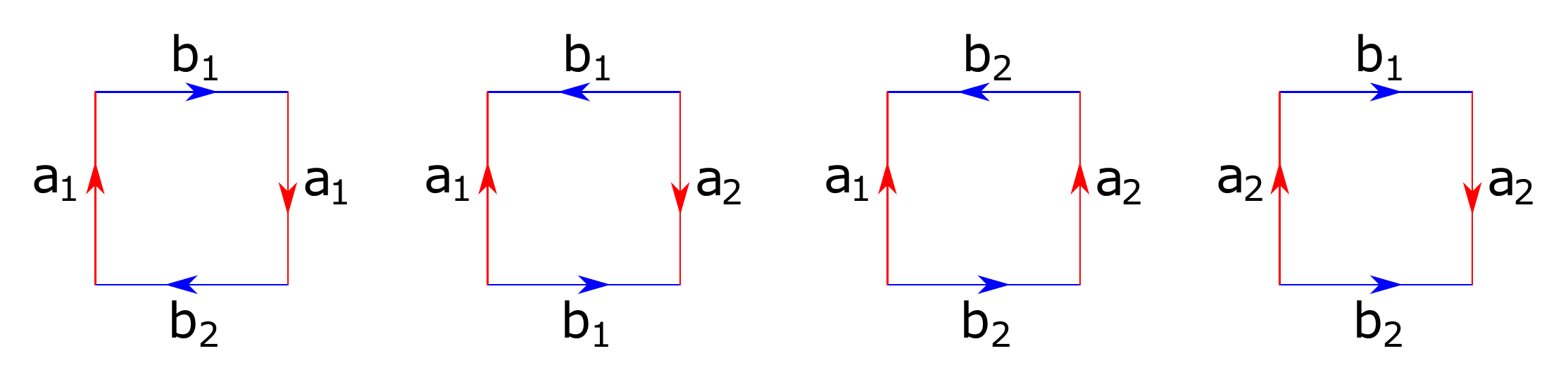}
\caption{A concrete example of $(4,4)$-group from four squares}\label{figure four squares}
\end{figure}

The four squares are geometric squares representing the $2$-cells of an associated complex $S_{A, B}$, where $A=\{a_1,a_2,a_3,a_4\}$ for $a_3=a_1^{-1}$ and $a_4=a_2^{-1}$, and $B=\{b_1,b_2,b_3,b_4\}$ for $b_3=b_1^{-1}$ and $b_4=b_2^{-1}$. Here $L=K=2$, cf. Lemma~\ref{prop:2-graph from VH structure}. With our convention in \eqref{eq:S ab}  we have, from left to right, $S_O^{a_1, b_1}$, $S_O^{a_1,b_{3}}$, $S_O^{a_1, b_{4}}$ and $S_O^{a_2,b_1}$.

The associated $2$-graph $\Lambda(S_{A,B})$ from Corollary~\ref{cor:the one vertex k-graph associated to a digraph}
 has $1$-skeleton determined by the $2$-dimensional digraph whose edges are given by the disjoint union of
$E_1=\{\mathfrak{a}_1, \mathfrak{a}_2, \mathfrak{a}_3,\mathfrak{a}_4\}$ and $ E_2=\{\mathfrak{b}_1,\mathfrak{b}_2, \mathfrak{b}_3,\mathfrak{b}_4 \}$, see Lemma~\ref{prop:2-graph from VH structure}.  Let us now describe explicitly the bijection $\phi:Y\to Y$. Note that the digraph has 4 loops of one colour (red) and 4 loops of the second colour (blue).

We have 16 paths of length two of the form $xy$, where $x\in E_1$ and $y\in E_2$, given by all the possible $\mathfrak{a}_i\mathfrak{b}_j$ with $i,j=1,\dots, 4$. Correspondingly, we have all possible vertical-horizontal pairs of edges $a_ib_j$  in the collection  of geometric squares
\[
S_*^{a_j,b_j},i,j=1,\dots,4, *=O,V,R,H.
\]
Pick for each $a_ib_j$ the unique $a_{l(i,j)}\in A$ and $b_{m(i,j)}\in B$ such that $b_{m(i,j)}a_{l(i,j)}$ is the corresponding horizontal-vertical pair of edges in the same square, and let $y'=\mathfrak{b}_{m(i,j)}$, $x'=\mathfrak{a}_{l(i,j)}$ as prescribed by the proof of Lemma~\ref{prop:2-graph from VH structure}.

Explicitly, corresponding to the horizontal-vertical pairs of edges in the geometric square
\[
\{S_O^{a_1,b_1}, S_V^{a_1,b_1},  S_R^{a_1,b_1},  S_H^{a_1,b_1}\},
\]
it is seen that $\phi(\mathfrak{a}_1\mathfrak{b}_1)=\mathfrak{b}_4\mathfrak{a}_3$, $\phi(\mathfrak{a}_3\mathfrak{b}_4)=\mathfrak{b}_1\mathfrak{a}_1$, $\phi(\mathfrak{a}_1\mathfrak{b}_2)=\mathfrak{b}_3\mathfrak{a}_3$ and $\phi(\mathfrak{a}_3\mathfrak{b}_3)=\mathfrak{b}_2\mathfrak{a}_1$.

Similarly, from the geometric square
\[
\{S_O^{a_1,b_3}, S_V^{a_1,b_3}, S_R^{a_1,b_3}, S_H^{a_1,b_3}\}
\]
we get $\phi(\mathfrak{a}_1\mathfrak{b}_3)=\mathfrak{b}_1\mathfrak{a}_4$, $\phi(\mathfrak{a}_3\mathfrak{b}_1)=\mathfrak{b}_3\mathfrak{a}_2$, $\phi(\mathfrak{a}_2\mathfrak{b}_3)=\mathfrak{b}_1\mathfrak{a}_3$ and $\phi(\mathfrak{a}_4\mathfrak{b}_1)=\mathfrak{b}_3\mathfrak{a}_1$;
from
\[
\{S_O^{a_1,b_4}, S_V^{a_1,b_4},  S_R^{a_1,b_4},  S_H^{a_1, b_4}\}
\]
we get $\phi(\mathfrak{a}_1\mathfrak{b}_4)=\mathfrak{b}_2\mathfrak{a}_2$, $\phi(\mathfrak{a}_3\mathfrak{b}_2)=\mathfrak{b}_4\mathfrak{a}_4$, $\phi(\mathfrak{a}_4\mathfrak{b}_4)=\mathfrak{b}_2\mathfrak{a}_3$ and $\phi(\mathfrak{a}_2\mathfrak{b}_2)=\mathfrak{b}_4\mathfrak{a}_1$;
finally,  from the geometric square
\[
\{S_O^{a_2,b_1}, S_V^{a_2,b_1},  S_R^{a_2,b_1}, S_H^{a_2,b_1}\}
\]
we get $\phi(\mathfrak{a}_2\mathfrak{b}_1)=\mathfrak{b}_2\mathfrak{a}_4$, $\phi(\mathfrak{a}_4\mathfrak{b}_2)=\mathfrak{b}_1\mathfrak{a}_2$, $\phi(\mathfrak{a}_2\mathfrak{b}_4)=\mathfrak{b}_3\mathfrak{a}_4$ and $\phi(\mathfrak{a}_4\mathfrak{b}_3)=\mathfrak{b}_4\mathfrak{a}_2$;
 this describes the bijection $\phi$ completely.

 The link of $S_{A,B}$ at its vertex $v$ is the complete bipartite graph of type $(4,4)$, see Figure~\ref{figure link}.

 \begin{figure}[h]
\centering
\includegraphics[height=4cm]{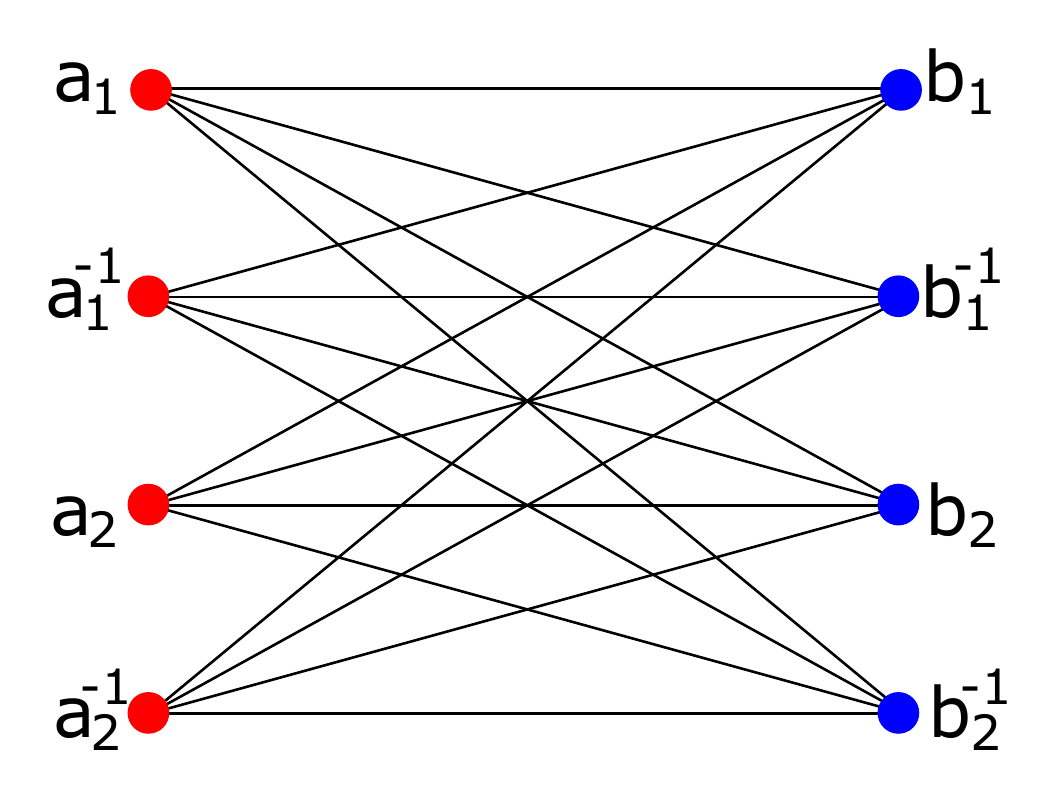}
\caption{The link of the complex}\label{figure link}
\end{figure}

It follows that $G:=\pi_1(S_{A,B},v)$ is a $(4,4)$-group. In fact, $G$ is the fundamental group of a CAT(0) complex with Gromov link condition, see \cite{Gromov}. We remind that every edge of the complex belongs to four squares, see figure~\ref{figure universal cover} for a fragment of the universal cover of the complex showing the edge $a_1$ belonging to four squares (in the universal cover).
\begin{figure}[h]
 \includegraphics[height=5cm]{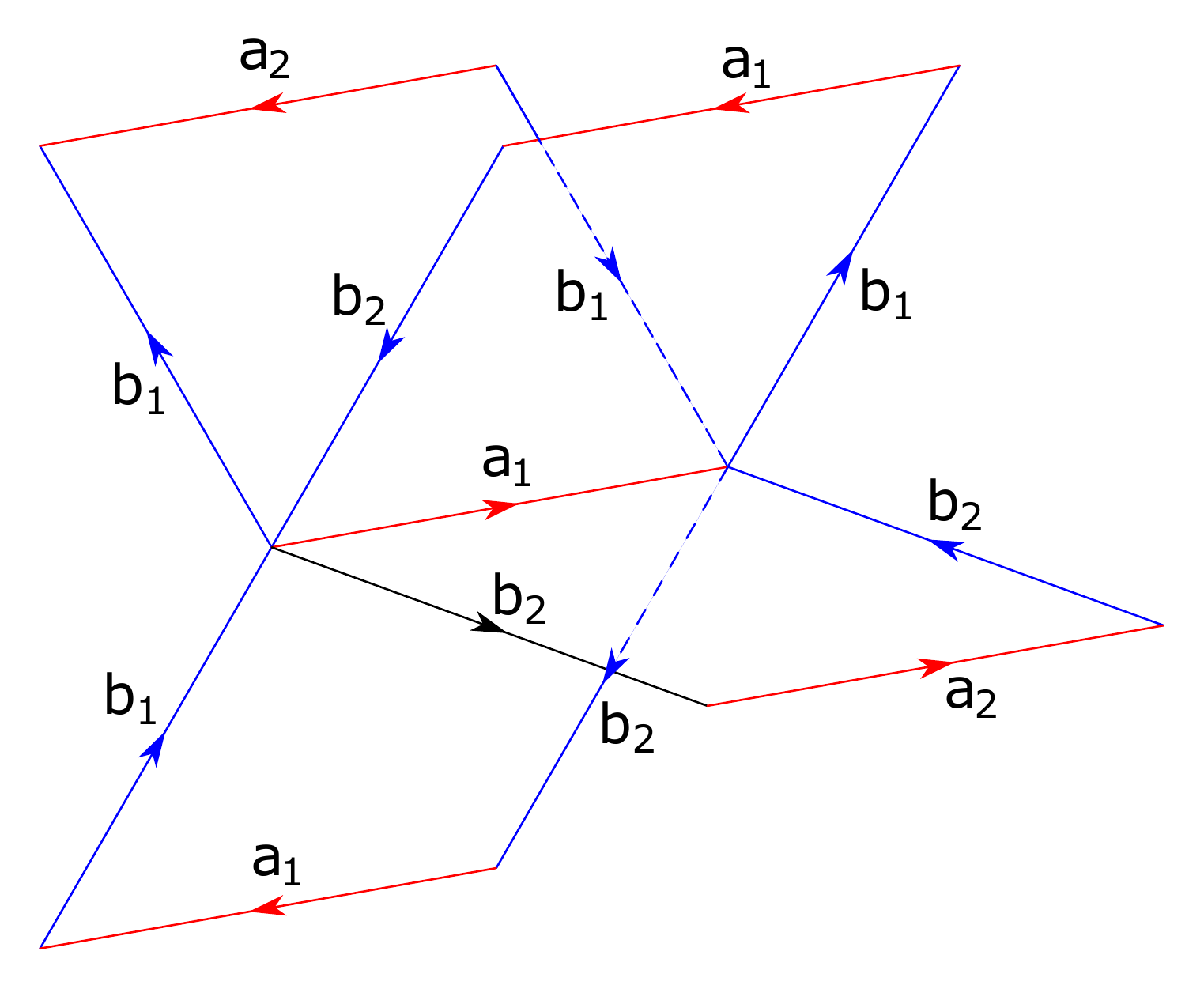}
\caption{Fragment of the universal cover showing the edge $a_1$}
\label{figure universal cover}
 \end{figure}
\end{example}

{\begin{remark}\label{rem:our 2 graph example2.37 versus KR}
The group with the same VH-structure as in Figure~\ref{figure four squares} appears also in \cite{KR}, Section 7, as the group $2\times2.37$ in their list. However, their $2$-graph is different from the one in Example~\ref{ex:group 37}, since, if we translate
the notions of \cite{KR} into higher rank graphs, the $2$-graph corresponding to the group $2\times2.37$ would have sixteen vertices.
In general, the $2$-graphs of \cite{KR} corresponding to $(2m,2n)$ groups give $2$-graphs with $4mn$ vertices, $4(m-1)mn$ edges of one colour and $4mn(n-1)$ edges of another colour. The $2$-graphs of \cite{R}, \cite{RS2} have $3(q^2+q+1)$ vertices and $3(q^2+q+1)q$ edges of each colour for $q$ being a prime power different from $3$.
\end{remark}}

\begin{proof} (Proof of Theorem~\ref{thm:one vertex k-digraph from k-cube}, the general case.) Fix $k\geq 3$. Assume first that we have a one-vertex $k$-cube complex $P$, thus we may take it of form $P_{A_1,\dots,A_k}$  associated with a $k$-cube group $G$ with underlying structure determined by the ordered tuple $(A_1,\dots,A_k)$. As explained, the edges of the complex are labelled by the generators of $G$. For each $i=1,\dots,k$ we write
\[
A_i=\{a_1^i,\dots,a_{L_i}^i,a_{L_{i}+1}^i,\dots,a_{2L_{i}}^i\},
\]
with the convention that $a_r^i a_{L_i+r}^i=1$ in $G$ for $1\leq r\leq L_i$. Define a digraph with edge set  $E=E_1\sqcup E_2\sqcup\dots \sqcup E_k$ by the assignment that to each $a_r^i$ corresponds a directed edge $\mathfrak{a}_r^i$ in $E_i$, with $i=1,\dots ,k$ and $r=1,\dots,2L_i$. We must identify a bijection $\phi$ on the set of bicoloured edges and establish conditions (F1) and (F2) of Definition~\ref{def:k digraph from coloured graph}.

Suppose that $xy$ is a path of length two in $E$ with $x\in E_i$ and $y\in E_j$ for distinct $i,j$ in $\{1,\dots,k\}$. Then there are  $a_r^{i}\in A_i$ and $b_s^j\in A_j$ for unique $r=1,\dots, 2L_i$ and $s=1,\dots,  2L_j$, such that $x=\mathfrak{a}_r^i$ and $y=\mathfrak{a}_s^j$. Since we have a cube complex, there is an associated square complex $P_{A_i, A_j}$ with corresponding group $G_{A_i,A_j}$, where we choose the convention that $A_i$ is vertical and $A_j$  horizontal direction. Lemma~\ref{prop:2-graph from VH structure} implies that there is a unique path of length two $y'x'$ with $y'\in A_j$ and  $x'\in A_i$, corresponding to a unique square with vertical-horizontal and horizontal-vertical pairs given by
\[
a_r^{i}a_s^{j}=a_{m(r,s)}^j a_{l(r,s)}^i
\]
such that $xy\sim y'x'$. Here $a_{m(r,s)}^j \in A_j$ and $a_{l(r,s)}^i\in A_i$ are uniquely determined. This provides the desired bijection $\phi$  and settles requirement (F1).

Next suppose that we are given a path $xyz$ with $x\in E_i, y\in E_j, z\in E_l$ for distinct colours $i,j,l$ in $\{1,\dots, k\}$. The key ingredient is that by condition (5) in Definition~\ref{defi:cubestructure}, 
each directed cube, in the sense of (F2), arises as a directed copy of one of the $3$-dimensional cubes of the complex. We now identify a directed cube satisfying the hypotheses of (F2) and an associated geometric $3$-cube.

First, there is a unique square $S_1^{ij}$ which contains a vertical-horizontal pair $a_r^i a_s^j$ with $a_r^i\in A_i, a_s^j\in A_j$ so that $x=\mathfrak{a}_r^i$ and $y=\mathfrak{a}_s^j$. Upon completing the square $S_1^{ij}$ to $a_r^i a_s^j=a_{s^1}^j a_{r^1}^i$, as in the beginning of the proof, for unique $r^1\in \{1,\dots, 2L_i\}$ and $s^1\in \{1,\dots, 2L_j\}$, we have
\[
xy\sim y^1x^1\text{ for }x^1=\mathfrak{a}_{r^1}^{i}\text{ and } y^1=\mathfrak{a}_{s^1}^j.
\]

Next we use $x^1$ and $z$ to extract a geometric square $S_2^{il}$, determined by $a_{r^1}^i a_t^l=a_{t^1}^l a_{r^2}^i$, for unique $t^1 \in \{1,\dots, 2L_l\}$ and $r^2\in \{1,\dots, 2L_i\}$, so that
\[
x^1z\sim z^1x^2 \text{ for } z=\mathfrak{a}_t^l, z^1=\mathfrak{a}_{t^1}^l\text{ and }x^2=\mathfrak{a}_{r^2}^i.
\]

Finally, by using $y^1,z^1$ we extract a geometric square $S_3^{jl}$, determined by
$a_{s^1}^ja_{t^1}^l =a_{t^2}^la_{s^2}^j $, for unique $s^2 \in \{1,\dots, 2L_j\}$ and $t^2\in \{1,\dots, 2L_l\}$, so that
\[
y^1z^1\sim z^2y^2 \text{ for }y^2=\mathfrak{a}_{s^2}^j\text{ and }z^2=\mathfrak{a}_{t^2}^l.
\]
There is a unique geometric cube containing the squares $S_1^{ij},S_2^{il}$ and $S_3^{jl}$, where the notation follows the convention after figure 1, and $a_r^i a_s^ja_t^l $ is a path joining two vertices in the cube at  longest possible distance due to condition (5) in Definition~\ref{defi:cubestructure}. Thus, in this geometric cube we have also obtained the path $a_{t^2}^l a_{s^2}^j  a_{r^2}^i$ opposite to $a_r^i a_s^ja_t^l $ and joining the same vertices in the  cube.

If we perform the same argument starting with $y,z$ to obtain $yz\sim z_1y_1$, followed by $x,z_1$ to obtain $xz_1\sim z_2x_1$ and finally $x_1,y_1$ to get $x_1y_1\sim y_2x_2$, we find unique squares $S_4^{jl}, S_5^{il}, S_6^{ij}$ determined by $a^j_sa^l_t=a^l_{t_1}a^j _{s_1}$, $a^i_ra^l_{t_1}=a^l_{t_2}a^i_{r_1}$ and $a^i_{r_1}a^j_{s_1}=a^j_{s_2}a^i_{r_2}$, respectively, for $r_2\in \{1,\dots, 2L_i\}, s_2\in \{1,\dots, 2L_j\}$ and $t_2\in \{1,\dots, 2L_l\}$. So
\[
x_2=\mathfrak{a}_{r_2}^i, \, y_2=\mathfrak{a}_{s_2}^j\, z_2=\mathfrak{a}_{t_2}^l.
\]
Since $a_r^i a_s^ja_t^l$ is a common $ijl$-path, the squares $S_1^{ij},S_2^{il}, S_3^{jl},S_4^{jl}, S_5^{il}, S_6^{ij}$ determine the same geometric cube. We have that $a_{t_2}^la_{s_2}^j  a_{r_2}^i$ is another path in this $3$-dimensional cube opposite to $a_r^i a_s^ja_t^l $ and joining vertices at longest possible distance. Since there only is one  path of longest distance opposite to $a_r^i a_s^ja_t^l $ in a $3$-dimensional cube,  we must have
\[
a_{s^2}^j=a_{s_2}^j,\, a_{t^2}^l =a_{t_2}^l \text{ and } a_{r^2}^i=a_{r_2}^i.
\]
Then  $x_2=x^2$, $y_2=y^2$ and $z_2=z^2$, as required to fulfill condition (F2).

We now assume that $\mathcal{X}$ has $N$ vertices, with $N\geq 2$, and we declare these to be the vertices of $\operatorname{DG}(\mathcal{X})$. As prescribed, each geometric edge in the complex is sent into two edges with opposite orientation in the edge set $E$ of $\operatorname{DG}(\mathcal{X})$. For every path of length two $xy$ with $x\in E_i$ and $y\in E_j$ for $i\neq j$ so that $o(y)=t(x)$, there is a unique geometric square $S_1$ in $\mathcal{X}$ in which $xy$ corresponds to a vertical-horizontal pair of edges. Then the corresponding horizontal-vertical pair of edges gives rise to $x'\in E_i$ and $y'\in E_j$ such that $xy\sim y'x'$, and this defines uniquely the bijection $\phi$ on the set $Y$ of paths of length two of distinct colours required in (F1). The condition (F2) holds by the same argument as the one-vertex case  because any $ijl$-coloured path $xyz$ will be contained in a unique $3$-cube in $\mathcal{X}$, determined through unique squares $S_1^{ij},S_2^{il}, S_3^{jl},S_4^{jl}, S_5^{il}, S_6^{ij}$. The difference is that since the complex has more than one vertex, we do not have a labelling of the edges by elements of the group $G$ acting cocompactly on $\mathcal{X}$, but this does not affect the existence of the squares and of the $3$-cube in which $xyz$ determines a path  with unique opposite path at longest distance between the same vertices of the $3$-dimensional cube.
\end{proof}

To visualise the argument in the proof of Theorem~\ref{thm:one vertex k-digraph from k-cube}, we refer to the generic geometric cube in figure~\ref{figure generic cube}. Let $x=a_1$ (or, for consistency, $x$ is a directed edge in $E_i$ labelled with $a_1\in A_i$), $y=b_1$ and $z=c_3$. The argument produces the path $a_{t^2}^la_{j^2}^j a_{r^2}^i$ given by $c_1b_3a_3$  following the squares $S_1^{ij}=(a_1,b_1,a_2^{-1}, b_2^{-1})$, $S_2^{il}=(a_2,c_3, a_3^{-1}, c_2^{-1})$ and $S_3^{jl}=(b_2, c_2, b_3^{-1}, c_1^{-1})$. Alternatively, it produces the path $a_{t_2}^la_{j_2}^j a_{r_2}^i$ following the  squares $(b_1, c_3, b_4^{-1}, c_4^{-1})$, $(a_1,c_4, a_4^{-1}, c_1^{-1})$ and $(a_4,b_4, a_3^{-1}, b_3^{-1})$.

\begin{example}\label{ex:figure1}
In figure~\ref{figure concrete geometric} we present a geometric cube which is part of the data of the $3$-cube group $\Gamma_2$ from Section \ref{subsection:RSV groups}.

\begin{figure}[h]
\centering
\includegraphics[height=4cm]{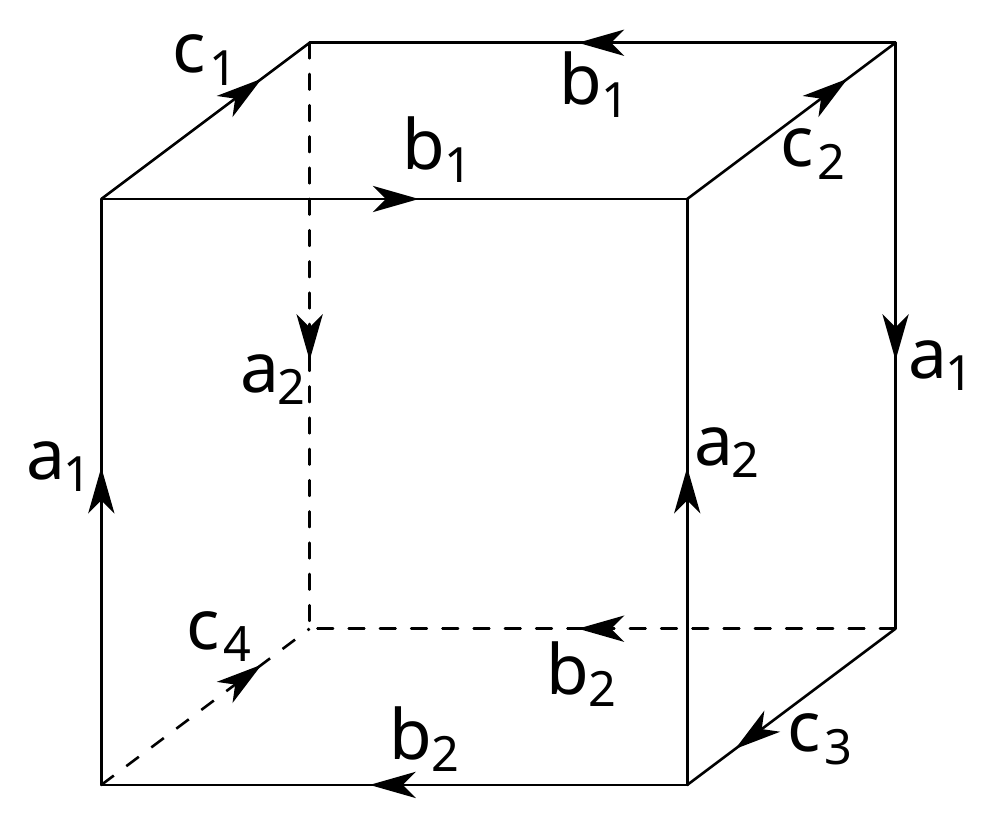}
\caption{A geometric cube for the $\Gamma_2$ group}
\label{figure concrete geometric}
\end{figure}

The generating sets of $\Gamma_2$ are $A_1=\{a_1,a_2, a_1^{-1},a_2^{-1}\}$, $A_2=\{b_1,b_2, b_3, b_1^{-1},b_2^{-1},b_3^{-1}\}$ and $A_3=\{c_1,c_2,c_3, c_4, c_1^{-1}, c_2^{-1},c_3^{-1},c_4^{-1}\}$. There are $(|A_1|\cdot |A_2|\cdot |A_3)|/2^3=24$ cubes in total,
where the factor $2^3$ in the denominator corresponds to the fact that there are $8$ vertices in the cube, and we can complete the cube starting with three edges of distinct colours from any one of them.

 The cube  in figure~\ref{figure concrete geometric} is obtained from the triple $(a_1, b_1, c_2)$ of edges in the three alphabets by completing its faces with geometric squares. With the notation of figure~\ref{figure generic cube}, the faces $S_1^{12}$, $S_2^{13}$ and $S_3^{23}$ arise, respectively, from the group relations
$a_1b_1a_4b_2$, $a_2c_2a_1c_3$ and $b_2c_4b_5c_3$ (identified with $b_2^{-1}c_3^{-1}b_2c_4^{-1}$). The remaining three faces correspond to the geometric squares $b_1c_2b_1c_5$, $a_1c_1a_2c_8$ (identified with $a_1c_1a_2c_4^{-1}$) and  $a_4b_4a_1b_2$ (identified with $a_2^{-1}b_1^{-1}a_1b_2$).
\end{example}

\subsection{Aperiodicity} In $C^*$-algebra theory, the classification of purely infinite, simple, unital, nuclear,  $C^*$-algebras is a landmark result by Kirchberg-Phillips, see \cite{Phi}.
Aperiodicity of a higher rank graph is an important property, because together with cofinality it implies simplicity of the associated $C^*$-algebra, and further implies pure infiniteness if every vertex can be reached from a loop with an entrance. We next investigate aperiodicity of $\Lambda(P)$ from Corollary~\ref{cor:the one vertex k-graph associated to a digraph}.

We recall the necessary facts and notation from \cite{KP}. Let $\Lambda$ be a $k$-graph. If ${m}=(m_i)_i, {q}=(q_i)_i\in \N^k$, we write  ${m}\leq {q}$ if $m_i\leq q_i$ for all $i=1,\dots,k$. By $\Omega_k$ we denote the $k$-graph with vertex set $\Omega_k^0=\N^k$ and set of elements (morphisms) consisting of pairs $(m,n)\in \N^k\times \N^k$ with $m\leq n$ and $d(m,n)=n-m$. The set $\Lambda^\infty$ of infinite paths consists of degree preserving functors $\omega:\Omega_k\to \Lambda$. An infinite path $\omega$ is \emph{aperiodic} provided that for every  $q\in \N^k$ and all $p\in \Z^k\setminus\{0\}$, there is $(m,n)\in \Omega_k$ such that $m+p\geq 0$ and $\omega(m+p+q,n+p+q)\neq \omega(m+p,n+p)$. The $k$-graph $\Lambda$ satisfies the \emph{aperiodicity condition (A)} provided that for every $v\in \Lambda^0$ there is an aperiodic path $\omega$ with $r(\omega)=v$.

In our case, the existence of an aperiodic infinite path will be provided by  the theory of rigid $k$-monoids from \cite{LV}. Guided by the work of Lawson-Vdovina, we first extend the notion of left and right rigid to $k$-dimensional digraphs with one vertex.  We note that the idea of rigidity below appeared in a first form in \cite[Definition on page 3, items(2) and (3)]{vdovina-pol}.

\begin{definition}\label{def:left right rigid digraph}
Let $\operatorname{DG}$ be a $k$-dimensional digraph with one vertex  and edge set $E=E_1\sqcup E_2\sqcup\dots\sqcup E_k$ for $k\geq 2$. We say that
\begin{enumerate}
\item $\operatorname{DG}$ is \emph{right rigid} if for every ${x}'\in E_i, {y}'\in E_j$ with $i\neq j$ there are unique $x\in E_{i}, y\in E_j$ such that $x{y}'\sim y{x}'$.
\item $\operatorname{DG}$ is \emph{left rigid} if for every $x\in E_i, y\in E_j$, $i\neq j$, there are unique ${x}'\in E_i, {y}'\in E_j$  such that $x{y}'\sim y{x}'$.
\end{enumerate}
\end{definition}

\begin{lemma}\label{lem:rigid} Suppose that $P$  is a one-vertex $k$-cube complex with underlying structure $(A_1,\dots,A_k)$ for  $i=1,\dots,k$, where each $A_i$ is of the form $\{a_1^i,\dots,a_{2L_{i}}^i\}$ and $a_r^i a_{L_i+r}^i=1$ in the associated group for all $1\leq r\leq L_i$. Let $\operatorname{DG}(P)$  be the  associated $k$-dimensional digraph from Theorem~\ref{thm:one vertex k-digraph from k-cube}. Then $\operatorname{DG}(P)$ is left and right rigid.
\end{lemma}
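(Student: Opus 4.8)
The plan is to unwind the definitions of left and right rigidity and show that each reduces to a bijectivity statement about the product structure in the square complexes $S_{A_i,A_j}$, which is exactly what the VH-structure in the $k$-cube group provides. Recall from the proof of Theorem~\ref{thm:one vertex k-digraph from k-cube} that for distinct colours $i\neq j$ the bijection $\phi$ on bicoloured paths is read off from the relations $a_r^i a_s^j = a_{m(r,s)}^j a_{l(r,s)}^i$ holding in the group $G$ associated to $P$; here $a_r^i$ ranges over $A_i$ and $a_s^j$ over $A_j$, and the edge $\mathfrak a_r^i \in E_i$ corresponds to $a_r^i$, similarly $\mathfrak a_s^j \in E_j$ to $a_s^j$. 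Thus a relation $x\mathfrak{y}'\sim \mathfrak{y}\mathfrak{x}'$ in $\operatorname{DG}(P)$ with $x=\mathfrak a_r^i$, $y'=\mathfrak a_{s'}^j$, $y=\mathfrak a_{s}^j$, $x'=\mathfrak a_{r'}^i$ corresponds precisely to the group relation $a_r^i\, a_{s}^j = a_{s'}^j\, a_{r'}^i$ in $G$ (after matching vertical--horizontal with horizontal--vertical pairs of edges as in Lemma~\ref{prop:2-graph from VH structure}).

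For right rigidity: fix $x' = \mathfrak a_{r'}^i \in E_i$ and $y' = \mathfrak a_{s'}^j \in E_j$; I must produce unique $x\in E_i$, $y\in E_j$ with $x y' \sim y x'$, i.e. unique $a_r^i\in A_i$, $a_s^j\in A_j$ with $a_r^i a_s^j = a_{s'}^j a_{r'}^i$ in $G$. But property (3) of Definition~\ref{defi:cubestructure} says the product set $A_jA_i$ has cardinality $\#A_j\cdot\#A_i$ and equals $A_iA_j$; in particular $A_iA_j = A_jA_i$ with both product maps injective, so the element $a_{s'}^j a_{r'}^i \in A_jA_i = A_iA_j$ has a unique expression as $a_r^i a_s^j$ with $a_r^i\in A_i$, $a_s^j\in A_j$. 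This gives existence and uniqueness of $(x,y)$, hence right rigidity. For left rigidity: fix $x = \mathfrak a_r^i\in E_i$ and $y = \mathfrak a_s^j\in E_j$; I need unique $x'\in E_i$, $y'\in E_j$ with $xy'\sim yx'$, which is the same as the unique factorisation of $a_r^i a_s^j \in A_iA_j = A_jA_i$ in the form $a_{s'}^j a_{r'}^i$ with $a_{s'}^j\in A_j$, $a_{r'}^i\in A_i$ — again guaranteed by $\#(A_jA_i)=\#A_j\cdot\#A_i$. (Indeed this is precisely the content of Lemma~\ref{prop:2-graph from VH structure}: the maps $(r,s)\mapsto(l(r,s),m(r,s))$ are well-defined bijections on $\{1,\dots,2L_i\}\times\{1,\dots,2L_j\}$.) So left rigidity follows as well.

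The only point requiring a little care is the bookkeeping translating the four equivalent presentations $S_O, S_R, S_H, S_V$ of a geometric square into the relation on bicoloured paths and checking that "unique square containing a given vertical--horizontal pair" is exactly "unique group relation $a_r^i a_s^j = a_{s'}^j a_{r'}^i$"; this is already done in the proof of Lemma~\ref{prop:2-graph from VH structure} and carries over verbatim to the pair $(A_i,A_j)$ inside the $k$-cube group, since $(A_i,A_j)$ generates a subgroup $G_{i,j}$ with a VH-structure (as noted after Definition~\ref{defi:cubestructure}). I do not anticipate a genuine obstacle here — the statement is essentially a repackaging of Definition~\ref{defi:cubestructure}(3) — so the proof will be short: invoke Lemma~\ref{prop:2-graph from VH structure} / the construction of $\phi$ in Theorem~\ref{thm:one vertex k-digraph from k-cube}, and then read off both rigidity conditions from the equality and cardinality of the product sets $A_iA_j = A_jA_i$ for each pair $i\neq j$.
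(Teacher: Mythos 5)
There is a genuine gap in the translation of the rigidity conditions into the group $G$. In Definition~\ref{def:left right rigid digraph} the relation to be analysed is $xy'\sim yx'$, which by the conventions of Lemma~\ref{prop:2-graph from VH structure} corresponds to the group identity $x\,y'=y\,x'$, that is, in your notation, to
\[
a_r^i\,a_{s'}^j \;=\; a_s^j\,a_{r'}^i .
\]
For right rigidity the known data are $r'$ and $s'$ and the unknowns are $r$ and $s$; the unknowns occur as the \emph{first} factor on each side, so this is not the factorisation of a single known element of $A_jA_i$. Your displayed equation $a_r^i a_s^j=a_{s'}^j a_{r'}^i$ has silently moved both unknowns to one side and both knowns to the other, and the unique-factorisation argument you then run (via $\#(A_iA_j)=\#A_i\cdot\#A_j$ and $A_iA_j=A_jA_i$) only re-proves that the map $\phi$ of (F1) is a bijection from vertical--horizontal to horizontal--vertical pairs. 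That is already built into the definition of a $k$-dimensional digraph and does not imply rigidity: one can write down complete collections of squares with bijective $\phi$ in which some pair (top edge, right edge) occurs in two distinct squares, so bijectivity of $\phi$ is strictly weaker than the conditions of Definition~\ref{def:left right rigid digraph}. The same mistranslation occurs in your treatment of left rigidity, where the relevant known element is likewise not $a_r^i a_s^j$.

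The missing ingredient --- which you mention only as ``bookkeeping'' at the end but never actually deploy, and which your main computation is in fact inconsistent with --- is the dihedral symmetry \eqref{eq:4 relations on a and b} together with the fixed-point-free involution $a\mapsto a^{-1}$ on each $A_i$. Given $x'=\mathfrak a_{r'}^i$ and $y'=\mathfrak a_{s'}^j$, one must apply the uniqueness of the square containing a given vertical--horizontal pair not to $a_{s'}^j a_{r'}^i$ but to the \emph{inverted} pair $a_{r'}^i\,(a_{s'}^j)^{-1}\in A_iA_j$: algebraically, $xy'=yx'$ is equivalent to $y^{-1}x=x'(y')^{-1}$, whose right-hand side is now a known element of $A_iA_j=A_jA_i$ to which the cardinality condition of Definition~\ref{defi:cubestructure}(3) applies, yielding unique $y^{-1}\in A_j$ and $x\in A_i$. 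Geometrically this is the passage from the $S_O$ presentation to the $S_H$ (resp.\ $S_V$) presentation of the same geometric square, and it is exactly how the paper argues (``the link of the vertex has no multiple edges''). So your intended reduction to the product-set conditions is correct in spirit, but the proof as written establishes a different, already-known statement and must be repaired by inserting this inversion/reflection step.
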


\begin{proof} The two properties of being rigid above arise from the fact that the link of the vertex $v$ in $P$ has no multiple edges. Therefore, every top-left corner and every bottom-right corner appear exactly once in a geometric square. The formal proof is below.

Suppose that $x'\in E_i$ and $y'\in E_j$ for  $i\neq j$ in $\{1,\dots,k\}$. Since we have a cube complex, there is an associated square complex $P_{A_i, A_j}$. By our construction of $\operatorname{DG}(P)$, there are  $a_r^i\in A_i$ and $b_s^j\in A_j$ for unique $r=1,\dots, 2L_i$ and $s=1,\dots,  2L_j$ such that $x'=\mathfrak{a}_r^i$ and $y'=\mathfrak{b}_s^j$.
In the square complex $P_{A_i, A_j}$ there is a unique square of the form
\[
S_O=(a_r^i, (b_s^j)^{-1}, (a_g^i)^{-1}, b_h^j)
\]
for $g\in \{1,\dots, 2L_i\}$ and $h\in \{1, \dots,  2L_j\}$. The associated $S_H$ is $(a_g^i, b_s^j, (a_r^i)^{-1}, (b_h^j)^{-1})$, and thus $a_g^i b_s^j=b_h^j a_r^i$ in $G_{A_i, A_j}$. Letting $x=\mathfrak{a}_g^i$ and $y=\mathfrak{b}_h^j$ gives $xy'\sim yx'$ in $\operatorname{DG}(P)$, as claimed.

Right rigidity  is similar. Starting this time with $x\in E_i$ and $y\in E_j$ for distinct $i,j$ in $\{1,\dots,k\}$, we find $a_r^i\in A_i$ and $b_s^j\in A_j$ for unique $r\in \{1,\dots, 2L_i\}$ and $s\in \{1,\dots,  2L_j\}$ so that $x=\mathfrak{a}_r^i$ and $y=\mathfrak{b}_s^j$. Consider the unique square
 \[
S_O=((a_r^i)^{-1},b_s^j, a_m^i, (b_n^j)^{-1})
\]
in  $P_{A_i, A_j}$, and form its associated $S_V$, which is $(a_r^i, b_n^j, (a_m^i)^{-1}, (b_s^j)^{-1})$. Then  $a_r^i b_n^j=b_s^j a_m^i$ in $G_{A_i, A_j}$, so letting $x'=\mathfrak{a}_m^i$ and $y'=\mathfrak{b}_n^j$ leads to $xy'\sim yx'$ in the digraph, as claimed.

\end{proof}

Given a one-vertex $k$-cube complex $P$, the associated $k$-graph $\Lambda(P)$ is a monoid, being a category with a single object. It is a $k$-monoid in the sense of \cite{LV}, with alphabets $E_i=\{\mathfrak{a}_1^i,\dots, \mathfrak{a}_{2L_i}^i\}$ for $i=1,\dots, k$ where each $L_i\geq 1$.

\begin{corollary}\label{cor:aperiodicity} Given a one-vertex $k$-cube complex $P$, the graph $\Lambda(P)$ is left and right rigid, and satisfies the aperiodicity condition. In particular, $C^*(\Lambda(P))$ is simple.
\end{corollary}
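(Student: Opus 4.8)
The plan is to deduce this corollary from the combination of Lemma~\ref{lem:rigid}, the rigid $k$-monoid theory of \cite{LV}, and the standard simplicity criterion for higher rank graph $C^*$-algebras from \cite{KP}. First I would record that $\Lambda(P)$, being the $k$-graph $\Lambda_{\operatorname{DG}(P)}$ attached to the one-vertex $k$-dimensional digraph $\operatorname{DG}(P)$, has a single object and hence is a $k$-monoid in the sense of \cite{LV}, with alphabets $E_i=\{\mathfrak{a}_1^i,\dots,\mathfrak{a}_{2L_i}^i\}$ and the rewriting rules given by the bijection $\phi$ via (F1)--(F2). Lemma~\ref{lem:rigid} says $\operatorname{DG}(P)$ is left and right rigid, and I would observe that Definition~\ref{def:left right rigid digraph} translates verbatim into the left/right rigidity of the associated $k$-monoid $\Lambda(P)$ in the terminology of \cite{LV}: a bicoloured relation $xy'\sim yx'$ of the digraph is exactly a length-two defining relation of the monoid, so uniqueness of $x,y$ from $x',y'$ (resp.\ of $x',y'$ from $x,y$) is precisely right (resp.\ left) rigidity of the monoid presentation.

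Next I would invoke the main structural output of \cite{LV}: a rigid $k$-monoid is cancellative and its associated $k$-graph is \emph{aperiodic}, i.e.\ satisfies condition (A) of \cite{KP} — equivalently, it admits an aperiodic infinite path through its (unique) vertex. Since $\Lambda(P)$ has a single vertex $v$, aperiodicity condition (A) reduces to the existence of one aperiodic $\omega\in\Lambda(P)^\infty$ with $r(\omega)=v$, which the rigidity hypothesis supplies. This gives the first assertion of the corollary. For simplicity of $C^*(\Lambda(P))$ I would then appeal to the Kumjian--Pask dichotomy/simplicity theorem: a row-finite, source-free $k$-graph $C^*$-algebra is simple provided the $k$-graph is cofinal and satisfies the aperiodicity condition (A). Aperiodicity is in hand; for cofinality, I would note that $\Lambda(P)$ is strongly connected — indeed it has a single vertex $v$, so trivially $v\Lambda(P)v\neq\emptyset$ and $v$ can be reached from every vertex — and strong connectedness implies cofinality. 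Row-finiteness and source-freeness were already established in Corollary~\ref{cor:the one vertex k-graph associated to a digraph} (each $E_i$ is nonempty and finite since $L_i\geq1$). Hence $C^*(\Lambda(P))$ is simple.

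The main obstacle, and the step deserving the most care, is the precise invocation of \cite{LV}: one must check that the hypotheses under which Lawson--Vdovina prove aperiodicity of the $k$-graph of a rigid $k$-monoid are met here — in particular that their notion of ``rigid'' matches Definition~\ref{def:left right rigid digraph} applied to $\operatorname{DG}(P)$, and that their conclusion is stated (or easily seen to be equivalent to) the Kumjian--Pask aperiodicity condition (A) rather than some other periodicity notion. I would resolve this by spelling out the dictionary between the digraph data $(E,\phi)$ and the monoid presentation, citing the relevant definition and theorem of \cite{LV} for the implication ``left and right rigid $\Rightarrow$ aperiodic $k$-graph,'' and then quoting \cite[Proposition~4.8 / Theorem~5.4]{KP} (the cofinality-plus-aperiodicity simplicity criterion) for the final conclusion. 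The remaining points — single vertex $\Rightarrow$ cofinal, and row-finiteness/source-freeness from Corollary~\ref{cor:the one vertex k-graph associated to a digraph} — are immediate and need only a sentence each.
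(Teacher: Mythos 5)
Your proposal is correct and follows essentially the same route as the paper: translate the digraph rigidity of Lemma~\ref{lem:rigid} into left/right rigidity of the one-vertex $k$-graph, invoke the rigid $k$-monoid machinery of \cite{LV} (the paper cites \cite[Corollary 11.10 and Lemma 4.15]{LV}, passing through effectiveness) to produce an aperiodic infinite path, note that the single vertex makes condition (A) and cofinality automatic, and conclude simplicity from \cite[Proposition 4.8]{KP}. The only difference is bookkeeping in how the \cite{LV} results are quoted, which you correctly flag as the step requiring care.
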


\begin{proof} Let $P$ be a one-vertex $k$-complex, which we may assume as in the hypothesis of Lemma~\ref{lem:rigid}. Let $\Lambda(P)=\Lambda(\operatorname{DG}(P))$ be the associated $k$-graph from Corollary~\ref{cor:the one vertex k-graph associated to a digraph}.  The right rigidity of the digraph implies that for any choice of elements $y',x'$ with $y'$ in the alphabet $E_j$ and $x'$ in $E_i$, where $i\neq j$, there are unique elements $x$ and $y$ in the alphabets $E_i$ and $E_j$, respectively, so that $y'\circ x=x'\circ y$. This means that $\Lambda(P)$ is right rigid. Left rigid follows in a similar way. We conclude  from \cite[Corollary 11.10 and Lemma 4.15]{LV} that $\Lambda(P)$ is effective and hence admits an aperiodic infinite path. As there is only one vertex, the aperiodicity condition is satisfied. Since $\Lambda(P)$ is also cofinal, $C^*(\Lambda(P))$ is simple by \cite[Proposition 4.8]{KP}.
\end{proof}

The constructions of \cite{RS} produce a purely infinite simple rank two Cuntz-Krieger algebra $\mathcal{A}$. This uses in a crucial way the fact that every word $w$ of a given shape  $m=(m_1,m_2)\in \N^2$ admits at least two distinct extensions $w',w''$, in the sense that the origin of $w',w''$ (with suitable interpretation) equals the terminus of $w$, and both have same  shape $e_j$ for all $j=1,2$.

For a row-finite and source free $k$-graph $\Lambda$, \cite[Proposition 4.9]{KP} put forward  conditions that would imply $C^*(\Lambda)$ is purely infinite simple. A correct version of these conditions was identified in \cite[Proposition 8.8]{Sim}, which we present here (writing \emph{cycle} instead of \emph{loop}): given a finitely aligned $k$-graph $\Lambda$, a morphism $\mu\in \Lambda\setminus \Lambda^0$  is a \emph{cycle with an entrance} if $s(\mu)=r(\mu)$ and there exists $\alpha\in s(\mu)\Lambda$ having $d(\alpha)\leq d(\mu)$ and being distinct from the initial segment of $\mu$ of degree $d(\alpha)$. Thus for some factorisation $\mu=\mu_1\mu_2$ where $d(\mu_1)=n\leq d(\mu)$,  there exists $\alpha\neq\mu_1$ with $d(\alpha)=n$ and  $r(\alpha)=r(\mu_1)$. Therefore, upon interpreting concatenation of edges on the digraph $\operatorname{DG}(P)$ as composition of morphisms in the associated $\Lambda(P)$, see \eqref{eq: formal composition of morphisms on edges}, and by interpreting the constructions of \cite{RS} in terms of higher rank graphs, the existence of a cycle with an entry requires that for a given $\mu_2$  there are
 two distinct extensions, with the additional property that the origin of $\mu_2$ is the terminus of one of the extensions. As we will show below, our $k$-graphs satisfy the stronger aperiodicity condition used in \cite{RS}.

\begin{proposition}\label{prop:purely infinite}
Let $P$ be a one-vertex $k$-complex as in the hypothesis of Lemma~\ref{lem:rigid} for $k\geq 2$ and let  $\Lambda(P)$ be the associated one-vertex $k$-graph.  {{If $L_i\geq 2$ for $i=1,\dots,k$, then the vertex in $\Lambda(P)$ supports at least two distinct cycles of length two in colour $i$, hence $C^*(\Lambda(P))$ is purely infinite.}} Furthermore, $C^*(\Lambda(P))$ falls under the Kirchberg-Phillips classification theory and is thus determined by its K-theory.
\end{proposition}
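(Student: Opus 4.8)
The plan is to establish the purely infinite simplicity first via a cycle-with-an-entrance argument, and then invoke the Kirchberg--Phillips machinery. First I would observe that by Corollary~\ref{cor:aperiodicity}, $\Lambda(P)$ is cofinal, satisfies the aperiodicity condition, and hence $C^*(\Lambda(P))$ is simple; since $P$ has a single vertex, $\Lambda(P)$ is automatically row-finite and source-free. To apply \cite[Proposition 8.8]{Sim} I must produce, for the unique vertex $v$, a cycle $\mu$ with an entrance. Fix a colour $i$ with $L_i\geq 2$. Using the construction of $\operatorname{DG}(P)$ from Theorem~\ref{thm:one vertex k-digraph from k-cube} together with the notation of Lemma~\ref{lem:rigid}, I would exhibit explicitly two distinct morphisms of degree $2e_i$ at $v$: since $E_i=\{\mathfrak{a}_1^i,\dots,\mathfrak{a}_{2L_i}^i\}$ has at least four elements (as $L_i\geq 2$), one can form a path $\mathfrak{a}_r^i\mathfrak{a}_{s}^i$ of length two on edges of colour $i$ (with $t(\mathfrak{a}_r^i)=o(\mathfrak{a}_s^i)=v$, which holds trivially as there is one vertex) and then a second such path $\mathfrak{a}_{r'}^i\mathfrak{a}_{s'}^i$ which is not equal to the first; the relation $\mathcal{R}$ on monochromatic paths is trivial (no diverting occurs within a single colour), so these represent distinct morphisms in $\Lambda(P)$. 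I would make precise that at least one of these, say $\mu=\mathfrak{a}_r^i\circ\mathfrak{a}_s^i$ viewed as a morphism $v\to v$, is a cycle, and the other, $\alpha=\mathfrak{a}_{r'}^i\circ\mathfrak{a}_{s'}^i$ with $d(\alpha)=2e_i=d(\mu)$ and $r(\alpha)=r(\mu)=v$ but $\alpha\neq\mu$, witnesses that $\mu$ is a cycle with an entrance. (In fact, because $E_i$ contains four distinct edges, one can guarantee two distinct cycles of length $2$ in colour $i$ without any relation ambiguity.)

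Next I would conclude pure infiniteness: by \cite[Proposition 8.8]{Sim}, a cofinal, aperiodic, row-finite, source-free $k$-graph in which every vertex can be reached from a cycle with an entrance has purely infinite $C^*$-algebra; here there is only one vertex, so the reachability condition is vacuous, and we have just exhibited the required cycle with an entrance. Hence $C^*(\Lambda(P))$ is purely infinite and simple. It is also unital (the vertex set $\Lambda^0$ is finite, in fact a singleton, so $\sum_{v\in\Lambda^0}\mathbf{s}_v$ is the unit) and nuclear and satisfies the UCT by the general theory of $k$-graph algebras (they are nuclear and in the bootstrap class, cf.\ \cite{KP} and subsequent work). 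Therefore the Kirchberg--Phillips classification theorem \cite{Phi} applies: $C^*(\Lambda(P))$ is a Kirchberg algebra satisfying the UCT and is classified up to isomorphism by its $K$-theory (together with the position of the class of the unit in $K_0$).

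I expect the main obstacle to be the careful verification that the two length-two monochromatic paths give genuinely distinct morphisms in $\Lambda(P)$ and that one of them is honestly a cycle with an entrance in the precise sense of \cite[Proposition 8.8]{Sim} --- i.e.\ matching the formal definition (there is a factorisation $\mu=\mu_1\mu_2$ with $d(\mu_1)=n\leq d(\mu)$ and an $\alpha\neq\mu_1$ with $d(\alpha)=n$, $r(\alpha)=r(\mu_1)$). This is really a bookkeeping point: one takes $n=d(\mu)=2e_i$, so $\mu_1=\mu$ and $\mu_2=v$, and then the existence of a second path of degree $2e_i$ at $v$ is exactly the condition; the hypothesis $L_i\geq2$ (hence $\#E_i\geq4$) is what makes this possible, and one should note that the adjacency/coordinate matrix $M_i$ has the $(v,v)$-entry equal to $\#E_i\geq 4$, so $M_i^2$ has $(v,v)$-entry at least $16$, far more than the two cycles needed. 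Everything else --- simplicity from Corollary~\ref{cor:aperiodicity}, unitality, nuclearity, the UCT, and the appeal to \cite{Phi} --- is standard and can be stated with brief references.
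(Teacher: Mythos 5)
Your proposal is correct and follows essentially the same route as the paper: fix a colour $i$ with $L_i\geq 2$, use the fact that the single vertex supports at least two distinct length-two cycles in colour $i$ to produce a cycle with an entrance in the sense of \cite[Proposition 8.8]{Sim}, and then conclude pure infiniteness and appeal to \cite[Corollary 8.15]{Sim} and \cite{Phi}. The only (immaterial) difference is that the paper takes the entrance $\alpha$ to be a single edge $\mathfrak{a}_2^i$ of degree $e_i$ distinct from the initial edge $\mathfrak{a}_1^i$ of the cycle $\mu=\mathfrak{a}_1^i\mathfrak{a}_{L_i+1}^i$, whereas you take $\alpha$ of full degree $2e_i$; your extra remarks on distinctness of monochromatic morphisms under $\mathcal{R}$ and on unitality, nuclearity and the UCT are correct elaborations of points the paper leaves to the cited references.
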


\begin{proof}
 Fix $i\in \{1,\dots,k\}$ with $L_i\geq 2$. Then we can form the length-two cycles $\mu=\mathfrak{a}_1^i\mathfrak{a}_{L_i+1}^i$  and $\nu=\mathfrak{a}_2^i\mathfrak{a}_{L_i+2}^i$ based at $v$ with $d(\mu)_i=2=d(\nu)_i$.{{ Now $\mathfrak{a}_2^i$ provides an edge $\alpha$ with nontrivial degree $d(\alpha)\leq d(\mu)$ which is an entry to $\mu$ not already contained in $\mu$}}. We conclude that $C^*(\Lambda(P))$ is purely infinite. The last claim follows by \cite[Corollary 8.15]{Sim}, see also \cite[Remark 5.2]{E}, by appealing to the classification result in \cite{Phi}.
\end{proof}

\section{Construction of $k$-graphs with several vertices}\label{sec:new examples}

In this section we present our construction of $k$-graphs with several vertices, for $k\geq 2$, and provide examples and applications.

Towards this aim we need a procedure to get $k$-cube complexes with several vertices. It is known  that in a complex with several vertices one cannot consistently identify the label of an edge with the label as generator in the fundamental group. We come around this challenge by introducing additional layers of labels, corresponding to covers with $N$ sheets, similar to what is done for $N=2$ in \cite[Section 8.1]{LSV}.
 In general, this is a hard problem, since there exist complexes without non-trivial finite covers, cf. \cite{burger-mozes:lattices}. Even when the complexes are known to admit $N$-covers, corresponding to subgroups of index $N$ of the fundamental group, see e.g. \cite[Theorem 1.38]{Hatcher},  it is difficult to construct covers explicitly. One challenge is that the subgroups can be defined in many different ways. For us a cover will be defined by picture, meaning that it is explicitly defined by the images of vertices, edges, faces and so on.  In all our pictures, the covering map amounts to forgetting the upper indexes, and we can explicitly see that we have a local homeomorphism at each point.

 Recall that in a one-vertex $k$-cube complex with generating structure $A_1,\dots, A_k$ we view edges as being coloured, with each $A_i$ for $i=1,\dots,k$ endowed with a distinct colour. For a complex $\mathcal{X}=\tilde{P}$ obtained as an $N$-cover of a one-vertex $k$-cube complex $P_{A_1,\dots,A_k}$, the coloured edges are given by $p^{-1}(A_i)$ for $i=1,\dots ,k$, where $p:\mathcal{X}=\tilde{P}\to P_{A_1,\dots,A_k}$ is the covering map. A $k$-complex, when viewed as undirected, is always connected.

\begin{proposition}\label{prop: double cover og k cube group}
Suppose that $G$ is a $k$-cube group with associated $k$-cube complex $P_{A_1,\dots,A_k}$. Then $P_{A_1,\dots,A_k}$ admits a double cover $p:\tilde{P}\to P_{A_1,\dots,A_k}$ with $\tilde{P}$ a complex with $2$ vertices.
\end{proposition}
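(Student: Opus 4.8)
The plan is to build the double cover very explicitly by a $\ZZ/2\ZZ$-colouring of paths, since the whole philosophy of the paper is that covers are "defined by picture" and the combinatorial data is finite. First I would note that a $k$-cube complex $P_{A_1,\dots,A_k}$ with one vertex has fundamental group $G$, and a connected double cover corresponds to an index-two subgroup, equivalently a surjective homomorphism $\chi:G\to \ZZ/2\ZZ$. The natural candidate is the homomorphism that sends every generator in $A_1\cup\dots\cup A_k$ to the nontrivial element $1\in\ZZ/2\ZZ$; one must check this is well-defined, i.e. that every defining relation of $G$ is sent to $0$. The relations of a $k$-cube group come in two families: the involution relations $a_r^i a_{L_i+r}^i=1$ (each generator paired with its inverse), which map to $1+1=0$; and the commuting-square relations $a_r^i a_s^j = a_{m(r,s)}^j a_{l(r,s)}^i$ coming from the geometric squares $S_O^{a,b}$, which map to $1+1=0=1+1$ on both sides. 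Hence $\chi$ is a well-defined surjection, and $\ker\chi$ has index two.

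Next I would translate this into an explicit two-vertex complex $\tilde P$ so that the covering map "amounts to forgetting the upper indices," matching the paper's convention. Concretely, label the two vertices of $\tilde P$ by $\{0,1\}=\ZZ/2\ZZ$. For each generator $a\in A_i$ of colour $i$, introduce two edges $a^{(0)}$ and $a^{(1)}$ in $\tilde P$ of colour $i$, with $a^{(\varepsilon)}$ running from vertex $\varepsilon$ to vertex $\varepsilon+1$ (indices mod $2$); the covering map $p$ sends $a^{(\varepsilon)}\mapsto a$. Then for each geometric square $S_O^{a,b}$ of $P$ given by the relation $ab=b'a'$, lift it to two geometric squares in $\tilde P$, one based at each vertex $\varepsilon$: the square with vertical-horizontal pair $a^{(\varepsilon)} b^{(\varepsilon+1)}$ and horizontal-vertical pair $b'^{(\varepsilon)} a'^{(\varepsilon+1)}$. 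Because the relation $ab=b'a'$ has even length in the generators, the four corners of each lifted square receive consistent vertex labels in $\ZZ/2\ZZ$, so the square is well-glued. Inductively the same parity argument lifts every $3$-cube and, more generally, every $l$-cube of $P$ to two $l$-cubes in $\tilde P$ (one over each vertex); this uses that all the gluing relations encoded in Definition~\ref{defi:cubestructure}(3) have even length. One then checks that the link of each of the two vertices of $\tilde P$ is isomorphic to the link of the vertex of $P$ (the colouring/parity bookkeeping gives a bijection on edges at a vertex preserving the bipartite incidence structure), so $\tilde P$ is again a $k$-cube complex with universal cover the same product of trees, and $p:\tilde P\to P_{A_1,\dots,A_k}$ is a genuine two-sheeted covering, i.e. a local homeomorphism with fibres of size two. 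Connectedness of $\tilde P$ follows since $\chi$ is surjective (equivalently, any edge connects the two distinct vertices, so $\tilde P$ is even bipartite hence connected).

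The step I expect to be the main obstacle is the verification that $\tilde P$ is honestly a $k$-cube complex in the sense of the paper's definition — that is, that its universal cover is still the product $\RT_1\times\dots\times\RT_k$ with the same constant valencies — rather than just a CW complex covering $P$. Abstractly this is automatic (a cover of a space has the same universal cover, and covering commutes with products of trees), so the real content is only to make the picture explicit and to confirm the links are unchanged; I would phrase this carefully using that $\pi_1(\tilde P)=\ker\chi$ acts freely on $\RT_1\times\dots\times\RT_k$ as a finite-index subgroup of $G$, hence still cocompactly, and that the quotient has exactly two vertices since $[G:\ker\chi]$ acts on the two-element set $G/\ker\chi$. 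A minor secondary point to get right is the orientation-reversal bookkeeping: the edge $(a^{(\varepsilon)})^{-1}$ should be identified with $(a^{-1})^{(\varepsilon+1)}$, which is forced by the requirement $p((a^{(\varepsilon)})^{-1})=a^{-1}$ together with the vertex labelling, and one checks this is consistent with the involutions on the $A_i$.
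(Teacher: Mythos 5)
Your construction is correct and is essentially the paper's own proof: the paper likewise builds the double cover explicitly (Lemmas~\ref{def: double cover of square} and~\ref{lem:2-vertex 3-complex}) by taking two vertices, two lifts $a^1,a^2$ of each edge alternating between them, two lifts of each geometric square with the same parity pattern $S_1=(a^1,b^2,(c^2)^{-1},(d^1)^{-1})$, and a recursive lift of higher cubes, with $p$ given by forgetting upper indices. Your additional framing via the surjection $\chi:G\to\ZZ/2\ZZ$ sending every generator to $1$ is a harmless reversal of the paper's logic, which instead deduces the index-two subgroup as a corollary of the cover.
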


We establish this proposition by writing down an {explicit} double cover, which will be prescribed "by picture" on $2$-cells and $3$-cells of the complexes under consideration, see Lemmas ~\ref{def: double cover of square} and \ref{lem:2-vertex 3-complex}. Before presenting the proof we
 point out a consequence.

\begin{corollary}
Each  $k$-cube group $G$ admits a subgroup of index $2$.
\end{corollary}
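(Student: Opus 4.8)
The plan is to deduce the corollary immediately from Proposition~\ref{prop: double cover og k cube group} together with standard covering space theory. Given a $k$-cube group $G$, the proposition provides a connected double cover $p:\tilde P\to P_{A_1,\dots,A_k}$ of the associated $k$-cube complex $P=P_{A_1,\dots,A_k}$. Since $P$ is the one-vertex complex with $\pi_1(P,v)\cong G$, and $\tilde P$ is a connected $2$-sheeted covering, the induced map $p_*:\pi_1(\tilde P,\tilde v)\to \pi_1(P,v)\cong G$ is injective and its image has index equal to the number of sheets of the covering, namely $2$. Hence $p_*\big(\pi_1(\tilde P,\tilde v)\big)$ is a subgroup of $G$ of index $2$.

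The one point requiring a word of care is that the relevant covering space statement must be applied to the topological realisation of the CW-complex $P$, which is a connected, locally path-connected, semilocally simply connected space, so the Galois correspondence between conjugacy classes of subgroups of $\pi_1$ and connected coverings applies verbatim; see \cite[Section 1.3]{Hatcher}. In particular the index of the subgroup $p_*\big(\pi_1(\tilde P,\tilde v)\big)$ equals the cardinality of the fibre $p^{-1}(v)$, which is $2$ by construction of $\tilde P$ as a complex with exactly two vertices lying over the single vertex $v$ of $P$. This identifies the desired index-$2$ subgroup.

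There is essentially no obstacle here: all the content is in Proposition~\ref{prop: double cover og k cube group}, whose proof is carried out afterwards via the explicit ``by picture'' double covers of $2$-cells and $3$-cells. I would therefore keep the argument short, as follows.

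\begin{proof}
By Proposition~\ref{prop: double cover og k cube group}, the $k$-cube complex $P_{A_1,\dots,A_k}$ associated to $G$ admits a connected double cover $p:\tilde P\to P_{A_1,\dots,A_k}$, with $\tilde P$ a complex having exactly two vertices. Since $P_{A_1,\dots,A_k}$ is a one-vertex complex with $\pi_1(P_{A_1,\dots,A_k},v)\cong G$, and its topological realisation is connected, locally path-connected and semilocally simply connected, the theory of covering spaces (see \cite[Section 1.3]{Hatcher}) applies. Fixing a base vertex $\tilde v\in p^{-1}(v)$, the induced homomorphism $p_*:\pi_1(\tilde P,\tilde v)\to \pi_1(P_{A_1,\dots,A_k},v)\cong G$ is injective, and the index of its image equals the number of sheets of the cover, i.e.\ $\#p^{-1}(v)=2$. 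Thus $p_*\big(\pi_1(\tilde P,\tilde v)\big)$ is a subgroup of $G$ of index $2$.
\end{proof}
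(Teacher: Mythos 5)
Your proof is correct and follows essentially the same route as the paper: both deduce the corollary from Proposition~\ref{prop: double cover og k cube group} via the standard Galois correspondence for covering spaces (Hatcher, Theorem 1.38 and Proposition 1.32), identifying the index of $p_*\big(\pi_1(\tilde P,\tilde v)\big)$ with the number of sheets. Your added remark that one should check connectedness of $\tilde P$ and work with the topological realisation is a sensible (if minor) point of care that the paper leaves implicit.
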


\begin{proof} By e.g. \cite[Theorem 1.38]{Hatcher}, for a given path-connected, locally path-connected, and semilocally simply connected space $X$ there is a bijection between the set of basepoint preserving isomorphism classes of path-connected covering spaces $p:(\tilde{X},\tilde{x_0})\to (X,x_0)$ and the set of subgroups of $\pi_1(X,x_0)$. The correspondence associates the subgroup $p_\ast(\pi_1(\tilde{X},\tilde{x_0}))$ to the covering $(\tilde{X},\tilde{x_0})$, and the number of sheets of the covering equals the index
of $p_\ast(\pi_1(\tilde{X}, \tilde{x_0}))$ in $\pi_1(X,x_0)$, see   \cite[Proposition 1.32]{Hatcher}. Applying this to the $2$-cover from Proposition~\ref{prop: double cover og k cube group} yields existence of a subgroup of $G$ of index $2$.
\end{proof}

In order to motivate our constructions of coverings, we review a construction of a $2$-cover of the complex associated to the torus $\mathbb{T}^2$.

\begin{figure}[h]
\centering
\includegraphics[height=3cm]{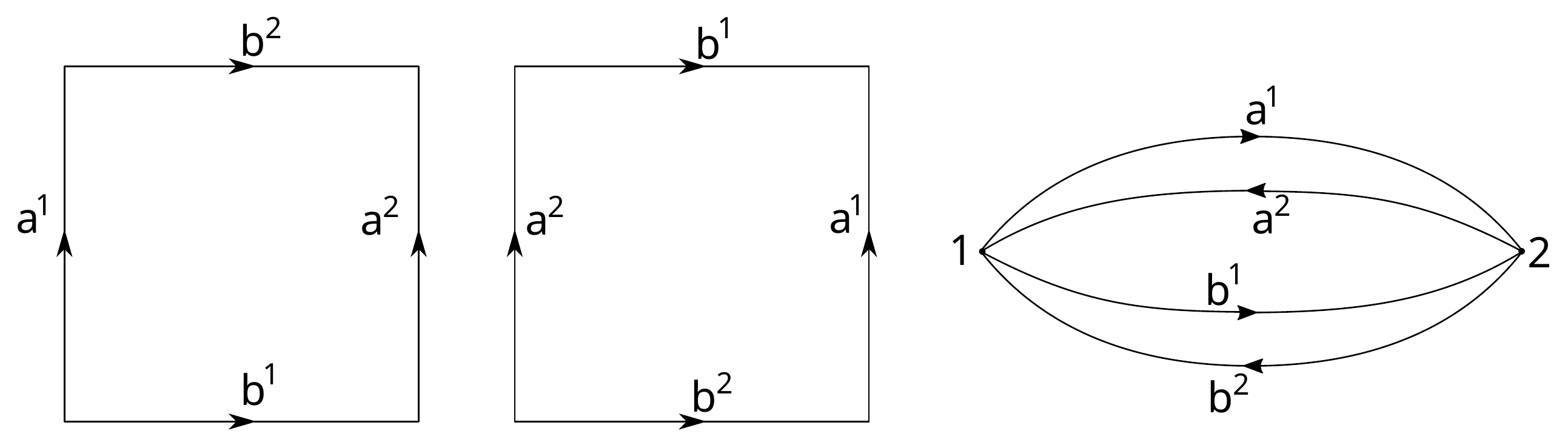}
\caption{A double cover of the torus}
\label{figure double cover of torus}
\end{figure}

Recall from, for example \cite[Page 14]{Hatcher}, that  $\mathbb{T}^2$ is obtained from a $2$-cell given by a square with pairs of opposite edges having same orientation and label $a$ (vertically) or $b$ (horizontally), by gluing it onto the wedge of two circles. A double cover with \emph{two vertices} arises from two squares with oriented edges having distinct labels $a^1$, $a^2$, $b^1$ and $b^2$, with the upper index $1$ or $2$ indicating the source vertex, as shown in figure~\ref{figure double cover of torus}.  The complex is obtained by attaching the two squares (the $2$-cells)  to the graph with vertices 1 and 2 in Figure~\ref{figure double cover of torus}. If $P$ is the complex associated to the $(2,2)$-group in Example~\ref{ex:complex from torus}, then the cover $\tilde{P}$ just described will lead to a $2$-graph with two vertices, see Theorem~\ref{thm:N-vertex k graph}, see also Figure 2 in \cite[Section 8.1]{LSV}.

The proof of Proposition~\ref{prop: double cover og k cube group} relies on a two lemmas, with the first one detailing an explicit double cover for the $2$-cells of the given $k$-cube complex.

\begin{lemma}\label{def: double cover of square}
Suppose that we have a one-vertex square complex $S=S_{A,B}$ with VH-structure $(A, B)$ and vertex $v$. Then there is a $2$-cover $p:\tilde{S}\to S$ given by a square complex $\tilde{S}$ with two vertices $v_1$ and $v_2$ whose squares are given by the prescription: The inverse image of a geometric square $S_O^{a,b}=(a,b,c^{-1},d^{-1})$ in $S_{A,B}$ consists of two geometric squares, $S_1=(a^1,b^2, (c^2)^{-1}, (d^{1})^{-1})$ and $S_2=(a^2,b^1, (c^1)^{-1}, (d^{2})^{-1})$ in $\tilde{S}$, and the covering map is determined by
\[
p(*^1)=p(*^2)=*\text{ for }*=a,b,c,d.
\]
\end{lemma}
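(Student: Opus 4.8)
The plan is to verify directly that the prescription for $\tilde S$ produces a bona fide square complex together with a covering map $p$, and I would organize this around three checks: that $\tilde S$ is well-defined, that $p$ is a local homeomorphism of degree two, and that the VH-structure lifts. First I would address well-definedness. The geometric square $S_O^{a,b}=(a,b,c^{-1},d^{-1})$ is really the equivalence class $\{S_O,S_R,S_H,S_V\}$ from \eqref{eq:4 relations on a and b}, so I must check that the assignment $S_O^{a,b}\mapsto\{S_1,S_2\}$ does not depend on which representative of the geometric square we start from. Concretely, applying the dihedral moves (cyclic rotation and the reversal \eqref{eq:equivalence general square}) to $(a^1,b^2,(c^2)^{-1},(d^1)^{-1})$ should again land inside $\{S_1,S_2\}$ after the upper indices are tracked; here the key bookkeeping point is that orientation reversal on an edge $*^m$ swaps the two vertices, hence flips the upper index $m\in\{1,2\}$, so each dihedral move is compatible with the index pattern. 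I would also need the link computation: the link of $\tilde S$ at each of $v_1,v_2$ must again be the complete bipartite graph on $A\sqcup B$ (equivalently $p$ restricted to links is an isomorphism), which is where one sees that $\tilde S$ has universal cover a product of trees — but since the statement only asks for a double cover of square complexes, it suffices to exhibit the local homeomorphism.

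Next I would check the covering property. The vertex set maps $v_1,v_2\mapsto v$. An edge $*^m$ of $\tilde S$ (for $*\in\{a,b,c,d\}$, or more precisely for each edge label in $E(S)=A\sqcup B$) maps to $*$, and since each geometric edge of $S$ with its two orientations has exactly two preimages distinguished by the source-vertex index, $p$ is two-to-one on edges in a way compatible with $o$ and $t$: an edge out of $v_1$ upstairs covers an edge out of $v$ downstairs. On $2$-cells, each geometric square downstairs has exactly the two preimages $S_1,S_2$ by construction, and one checks that $S_1$ has all its vertices among $\{v_1,v_2\}$ consistently with the edge-index rule, so the attaching maps commute with $p$. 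Combining these, $p$ is a combinatorial covering of degree $2$; a local-homeomorphism statement on the topological realizations then follows because $p$ is an isomorphism on the (closed) star of each vertex, each edge, and each square.

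The main obstacle, and the step I would spend the most care on, is the consistency of the upper-index assignment across the whole complex: when we write $S_1=(a^1,b^2,(c^2)^{-1},(d^1)^{-1})$ and $S_2=(a^2,b^1,(c^1)^{-1},(d^2)^{-1})$, the pattern is "the two horizontal-type edges carry opposite indices from the two vertical-type edges", and one must confirm this pattern is forced (not just chosen) by the requirement that at each vertex $v_i$ the link be complete bipartite, i.e. that every pair $(\mathfrak a^i,\mathfrak b^{i'})$ of an $A$-edge and a $B$-edge out of $v_i$ spans exactly one square corner upstairs. Equivalently: given the relation $ab=b'a'$ downstairs (so $c=a'$, $d=b'$ in the notation of \eqref{eq:S ab}, up to the inversion conventions), the lifted relations must be $a^1b^2 = (b')^1(a')^2$ and $a^2b^1=(b')^2(a')^1$, and one verifies these are mutually consistent for all four members of the geometric square simultaneously, using only that inversion flips the index. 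I would write this out once for the generic square and note it then propagates because distinct geometric squares of $S$ share only edges, on which the index rule is unambiguous. Finally I would record that $\tilde S$ indeed has two vertices and that $p_\ast$ identifies $\pi_1(\tilde S)$ with an index-$2$ subgroup of $G=\pi_1(S,v)$, which is exactly the input needed for Proposition~\ref{prop: double cover og k cube group} and its corollary.
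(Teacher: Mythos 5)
Your proposal is correct and follows essentially the same route as the paper's (much terser) proof, which simply observes that $p(S_*^{a^1,b^2})=S_*^{a,b}=p(S_*^{a^2,b^1})$ for $*=O,H,V,R$ --- i.e.\ your well-definedness check under the dihedral moves --- and then notes that $p$ is a local homeomorphism because it is defined on the cells, sending edges to edges and vertices to vertices; your extra verifications (two-to-one on edges and $2$-cells, complete bipartite links upstairs) are sound and merely make explicit what the paper leaves implicit. One small caution: your phrase ``orientation reversal flips the upper index'' should be read as $(*^m)^{-1}=(*^{-1})^{m'}$ with $m'\neq m$ (the reversed edge has the other origin vertex), not as $(*^1)^{-1}=*^2$, since $*^1$ and $*^2$ are two \emph{distinct} geometric edges of $\tilde S$ lying over the same geometric edge of $S$.
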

Equivalently, if we denote $v_1=1$ and $v_2=2$, the covering $p$ is depicted on the squares by
\begin{equation}\label{eq:explicit two cover of squares}
\def\g#1{\save[]!C="g#1"\restore}%
\xymatrix{
\g1 \ar @{}[dr] |{S_1}{\color{red}{2}}\ar[r]^{b^2} & {1} &\g2 \ar @{}[dr] |{S_2}{1}\ar[r]^{b^1} & {\color{red}{2}} &\g3 {} &\g4 \ar @{}[dr] |{S_O^{a,b}} {1}\ar[r]^{b} &{1}\\
{1}\ar[u]^{a^1}\ar[r]_{d^1} & {\color{red}{2}}\ar[u]_{c^2} & {\color{red}{2}}\ar[u]^{a^2}\ar[r]_{d^2} & {1}\ar[u]_{c^1} &{} & {1}\ar[u]^{a}\ar[r]_{d} & {1}\ar[u]_{c}
}
\end{equation}

Note that in the $2$-cover there are two geometric edges for each geometric edge in $S_{A,B}$, so for example to $a\in S_{A,B}$ having origin and terminus the vertex $1$ (identified with $v$) there will correspond $a^1$ and $a^2$ in $\tilde{S}$, with $a^1$ having origin $1$ and terminus $2$, and $a^2$ with origin $2$ and terminus $1$.

\begin{proof}
We need only observe that $p(S_*^{a^1,b^2})=S_*^{a,b}=p(S_*^{a^2, b^1})$ for $* =O, H, V, R$. Thus the square complex $\tilde{S}$ is well-defined. The map $p$ is a local homeomorphism because it is defined on the cells, and sends edges to edges and vertices to vertices.
\end{proof}

\begin{lemma}\label{lem:2-vertex 3-complex}
Let $P_{A_1,\dots,A_k}$ be a $k$-cube complex associated with a $k$-cube group $G$ with underlying structure determined by the ordered tuple $(A_1,\dots,A_k)$, with $\#A_i=2L_i$ for every  $i=1,\dots,k$.
There is a $2$-cover $\tilde{P}$ of $P_{A_1,\dots,A_k}$ determined as follows: On each $2$-dimensional cell, the covering $p$ is defined in Lemma~\ref{def: double cover of square}. On a $3$-dimensional geometric cube $C$, such as is described in figure~\ref{figure generic cube} where we assume $a_r\in A_i$, $b_s\in A_j$ and $c_t\in A_l$, for $r,s,t=1,\dots, 4$ and $i,j,l\in \{1,\dots,k\}$, the cover $\tilde{C}$ of $C$ consists of two geometric cubes, see figure~\ref{figure two cubes in a two cover}, with labelling of edges $\{a_r^\epsilon\}$, $\{b_s^\epsilon\}$ and $\{c_t^\epsilon\}$ for $\epsilon=1,2$, and with the covering map given by
\begin{equation}\label{eq:map p for cubes}
p:\tilde{C}\to C, p(a_r^1)=p(a_r^2)=a_r, r=1,\dots, 4,
\end{equation}
and similarly for $p(b_s^\epsilon)$ and $p(c_t^\epsilon)$. For $4\leq l\leq k$, the map $p$ is defined on an arbitrary $l$-cube by its prescription on the underlying $3$-cubes.
\end{lemma}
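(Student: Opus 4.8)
The plan is to build the $2$-cover $\tilde P$ of $P_{A_1,\dots,A_k}$ by induction on the dimension of cells, using the prescription of Lemma~\ref{def: double cover of square} on $2$-cells as the base case and checking at each stage that the local homeomorphism property is preserved. First I would fix the vertex set of $\tilde P$ to be $\{1,2\}$ (a double cover of the single vertex $v$), and declare that each oriented edge $\mathfrak{a}$ of $P$ lifts to two oriented edges $\mathfrak{a}^1,\mathfrak{a}^2$, where $\mathfrak{a}^{\varepsilon}$ has origin $\varepsilon$; since $\mathfrak{a}$ is a loop at $v$, its terminus must be the \emph{other} vertex, so $\mathfrak{a}^1$ goes $1\to 2$ and $\mathfrak{a}^2$ goes $2\to 1$ (this is exactly the behaviour recorded after the statement of Lemma~\ref{def: double cover of square}). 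Consistency with orientation reversal, $p((\mathfrak{a}^{-1})^{\varepsilon})=(\mathfrak{a}^\varepsilon)^{-1}$ up to relabelling origin, has to be noted but is immediate. On $2$-cells the lift is already given by Lemma~\ref{def: double cover of square}: a geometric square $S_O^{a,b}=(a,b,c^{-1},d^{-1})$ has preimage $\{S_1,S_2\}$ with $S_1=(a^1,b^2,(c^2)^{-1},(d^1)^{-1})$ and $S_2=(a^2,b^1,(c^1)^{-1},(d^2)^{-1})$, and $p$ forgets upper indices.

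The heart of the argument is the inductive step for $3$-cells, and this is where I expect the main obstacle to lie. Given a geometric cube $C$ as in figure~\ref{figure generic cube}, with six faces $S_1^{ij},\dots,S_6^{ij}$ expressed through the labels $a_1,\dots,a_4\in A_i$, $b_1,\dots,b_4\in A_j$, $c_1,\dots,c_4\in A_l$, I would define the two lifts $\tilde C^1,\tilde C^2$ by assigning upper indices to every edge label so that (i) on each of the six faces the induced labelling is one of the two squares $S_1,S_2$ prescribed by Lemma~\ref{def: double cover of square} for that face, and (ii) the resulting indexed labels are mutually compatible around the cube, i.e. an edge shared by two faces of $C$ gets the same upper index from both faces. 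The key combinatorial point is that the eight vertices of $C$ receive a consistent $2$-colouring (a proper $2$-colouring of the $1$-skeleton of the cube, which is bipartite), and once one vertex is assigned the value $1\in\{1,2\}$, every edge label $\mathfrak{a}$ incident to a vertex coloured $\varepsilon$ is forced to be $\mathfrak{a}^\varepsilon$; I must verify that this forcing is globally consistent, i.e. that going around each $2$-face returns the same assignment, which is precisely the statement that each face-lift matches one of $S_1,S_2$. The bipartiteness of the cube graph plus the face-level compatibility from Lemma~\ref{def: double cover of square} should make this automatic: the two $2$-colourings of the cube (swap $1\leftrightarrow 2$) give exactly the two lifts $\tilde C^1$ and $\tilde C^2$, and together they cover $C$ bijectively.

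Having fixed $\tilde C^1,\tilde C^2$, I would define $p|_{\tilde C}$ by $p(\mathfrak{a}^\varepsilon)=\mathfrak{a}$ on all edge labels as in \eqref{eq:map p for cubes}, extended affinely on each cube as a map $[0,1]^3\to[0,1]^3$; this is a homeomorphism on each closed cube and agrees with the previously-defined map on shared $2$-faces by construction, so the definitions glue. To see $p$ is a covering map, I would check that it is a local homeomorphism at every point: at an interior point of a cube it is a homeomorphism onto a neighbourhood; at a point of a lower-dimensional cell one uses that the cells fitting around a lift project bijectively (in the link) onto the cells fitting around the image, which follows because around each vertex of $\tilde P$ the link is identified with the link of $v$ in $P$ via $p$ (no edge of the link is collapsed, since $p$ sends distinct edge-lifts $\mathfrak{a}^1\ne\mathfrak{a}^2$ to the same edge but with different endpoints, hence they live in links of different vertices). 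Finally, for $4\le l\le k$, an $l$-cube of $P$ is determined by its $3$-dimensional sub-cubes (as recalled in Section~\ref{subsec:square complex} and Remark~\ref{rmk:k cube complkexes versus groups}, following \cite{MRV}), so defining $p$ on all the $3$-sub-cubes compatibly — which the vertex-$2$-colouring does simultaneously for the whole complex — extends $p$ uniquely to the $l$-cells, completing the inductive construction and the proof. I would conclude by noting $\tilde P$ is connected: any vertex $\varepsilon$ is reached from any other along a lifted edge path, because $P$ is connected and each edge-lift changes the vertex.
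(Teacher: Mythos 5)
Your proposal is correct and follows essentially the same route as the paper: the paper's proof simply asserts that the map $p$ of \eqref{eq:map p for cubes} is well-defined on each $3$-cube and then extends recursively to higher-dimensional cubes, which is exactly the construction you describe. Your explicit justification via the bipartiteness of the cube's $1$-skeleton (the two proper $2$-colourings of the eight vertices giving the two lifts, with consistency on shared edges automatic because the upper index depends only on the colour of the origin vertex), together with your checks of the local homeomorphism property and connectedness, supplies details that the paper's terse proof leaves implicit in its figures.
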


\begin{figure}[h]
\centering
\includegraphics[height=4cm]{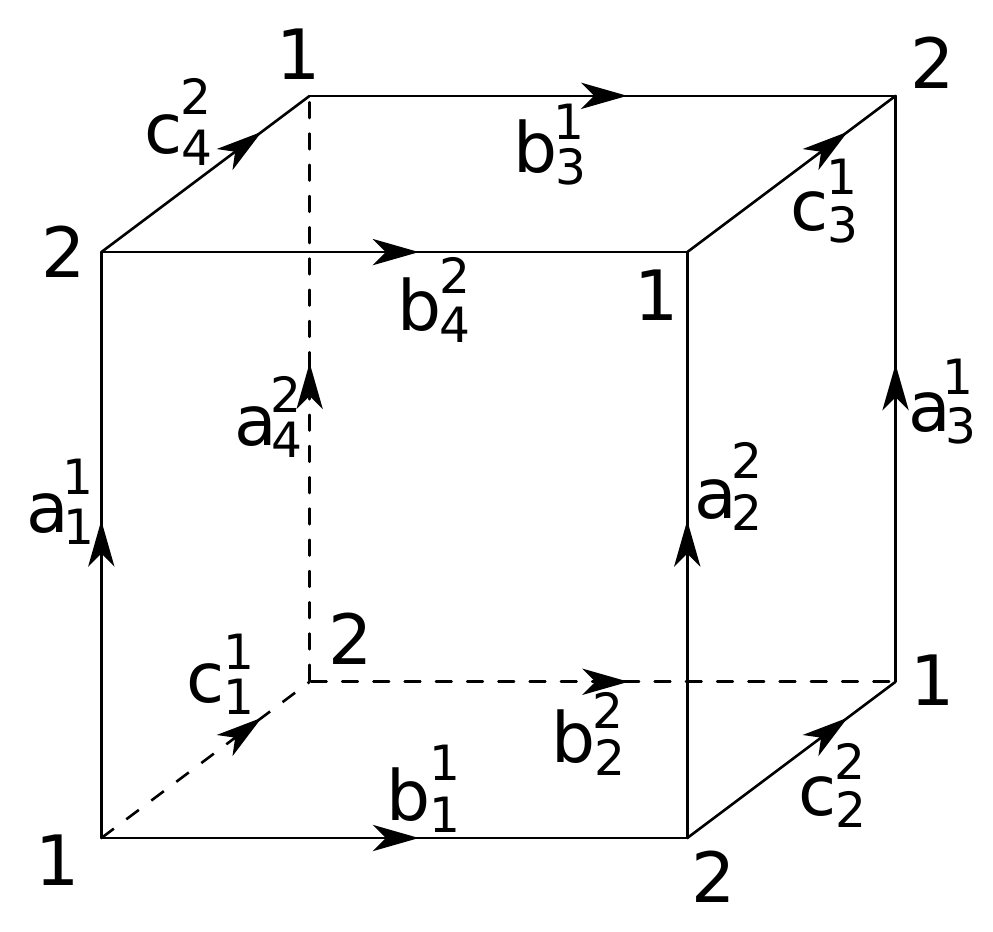}\qquad
\includegraphics[height=4cm]{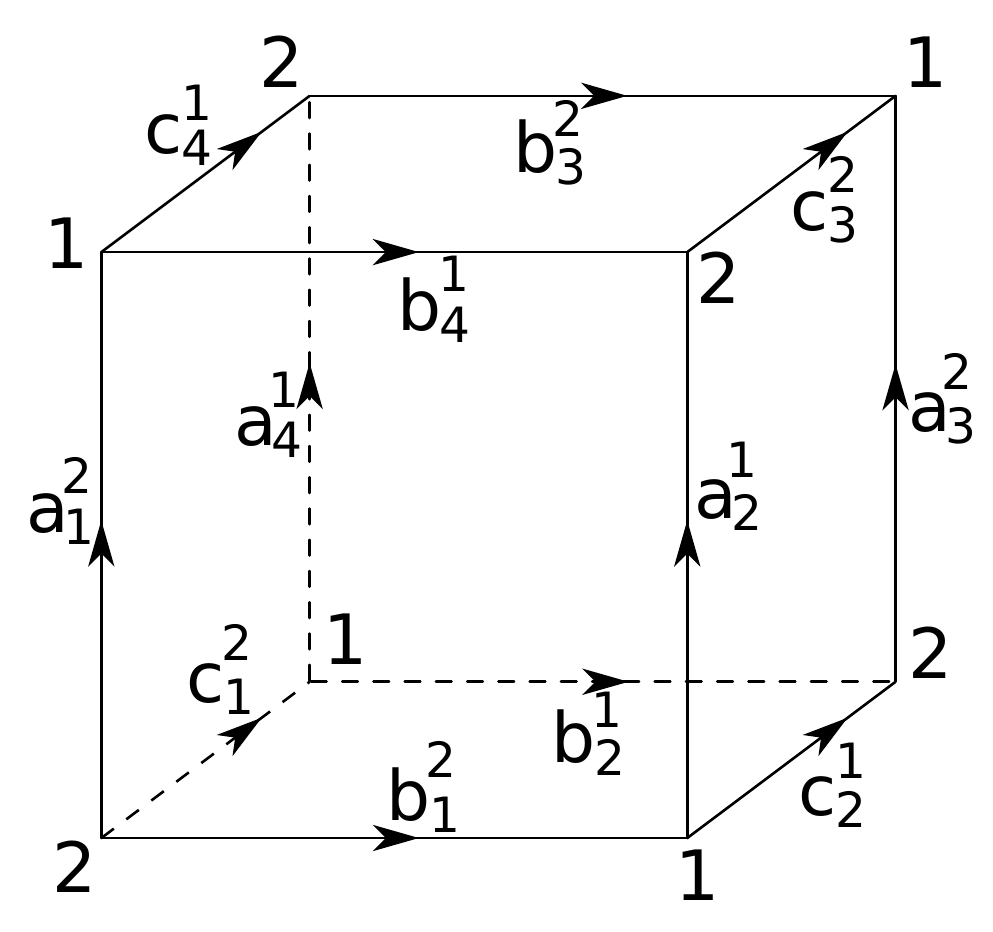}
\caption{A two cover of a generic geometric cube}
\label{figure two cubes in a two cover}
\end{figure}

\begin{proof} At $k=3$, it suffices to verify that $p$ on $\tilde{C}$ is well-defined. But this is clear  from the construction of the map $p$ in \eqref{eq:map p for cubes}. The map $p$ is constructed recursively on higher dimensional cubes: if $C$ is an $l$-cube for $4\leq l\leq k$, then  $p$ is prescribed  consistently on all $(l-1)$-dimensional faces of $C$, similarly to how \eqref{eq:map p for cubes} is obtained on $3$-cubes from its prescription in \eqref{eq:explicit two cover of squares} on squares.
\end{proof}

\begin{proof}[Proof of Proposition~\ref{prop: double cover og k cube group}] This follows by applying Lemmas \ref{def: double cover of square} and \ref{lem:2-vertex 3-complex}.
\end{proof}

To obtain $k$-complexes with $N$ vertices for $N>2$, there are several ways to use $k$-cube groups.
Many of the $k$-cube groups are residually finite, so, because of the 1-to-1 correspondence between the subgroups of index $N$ and $N$-covers of the corresponding $k$-cube complex, we can get infinitely many $k$-cube complexes with $N$ vertices. {In principle, different  subgroups of the same index $N$ can lead to different coverings. If $N\geq 3$, then the labelling of vertices is hard to sort out and we do not know of an explicit prescription similar to \eqref{eq:map p for cubes}. We shall use the following general procedure.

\begin{proposition}\label{prop:N vertex complex from symmetric group}
 Suppose that $G$ is a residually finite $k$-cube group with $k\geq 2$. To each normal subgroup $H$ of $G$ of finite index $N$ there is a $k$-complex $\mathcal{X}$ with $N$ vertices obtained by the following prescription: let $Q:G\to S_N$ the homomorphism obtained by composing the embedding of $G/H$ into the symmetric group on $N$ letters $S_N$ given by Cayley's theorem with the quotient map $q:G\to G/H$. For $a\in G$, the permutation $Q(a)$ in $S_N$ encodes the edges in the complex, with $a^n$ labelling an edge from the vertex $n$ to the vertex $n'=Q(a)(n)$ for $n=1,\dots,N$.

\end{proposition}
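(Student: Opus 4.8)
\textbf{Proof strategy (proposal).} The plan is to construct $\mathcal{X}$ directly, cell by cell, using $Q$ to index the lifts of each cell of $P_{A_1,\dots,A_k}$, and then to recognise the index-deleting map $p\colon\mathcal{X}\to P_{A_1,\dots,A_k}$ as an $N$-fold covering. Once that is done, $\mathcal{X}$ is automatically a $k$-cube complex: being a connected cover of $P_{A_1,\dots,A_k}$, its universal cover coincides with that of $P_{A_1,\dots,A_k}$, which is a product of $k$ trees of fixed constant valencies.

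First I would fix the $1$-skeleton: the vertex set is $\{1,\dots,N\}$, identified with $G/H$ so that $G/H$ acts on it through $Q$; for each colour $i$, each $a\in A_i$ and each $n$, there is an edge $a^n$ of colour $i$ from $n$ to $Q(a)(n)$, with orientation reversal $(a^n)^{-1}:=(a^{-1})^{Q(a)(n)}$. This is a well-defined edge because $Q(a^{-1})=Q(a)^{-1}$, and $a^n\mapsto(a^n)^{-1}$ is a fixed-point-free involution on each colour class since $a\mapsto a^{-1}$ is one on $A_i$; the graph is connected because $\bigcup_iA_i$ generates $G$ and $G\to G/H$ is onto, so $G$ acts transitively on $\{1,\dots,N\}$. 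For the $2$-cells: each geometric square of $P_{A_1,\dots,A_k}$ is the dihedral class $\{S_O,S_R,S_H,S_V\}$ of a tuple $S_O^{a,b}=(a,b,a'^{-1},b'^{-1})$ with $a\in A_i$, $b\in A_j$, $i\neq j$, and $ab=b'a'$ in $G$, cf.\ \eqref{eq:4 relations on a and b}; I would glue, for each $n$, a square whose boundary starting at $n$ follows $a^n$, then $b^{Q(a)(n)}$, then the reverse of the appropriate lift of $a'$, then the reverse of the appropriate lift of $b'$, the boundary closing precisely because $ab=b'a'$ in $G$ and $Q$ is a homomorphism. Since every relation among the four corners of the class $\{S_O,S_R,S_H,S_V\}$ is a consequence of $ab=b'a'$, the resulting square does not depend on the chosen representative or starting vertex, so each geometric square of $P$ lifts to exactly $N$ geometric squares of $\mathcal{X}$. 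The higher cells are defined recursively: a geometric $l$-cube of $P$ for $3\le l\le k$ has all its faces geometric squares, and, exactly as in Lemma~\ref{lem:2-vertex 3-complex}, the consistency of its lift reduces to the $3$-cube case; there the two \emph{increasing} and the two \emph{decreasing} triples of squares bound a common $3$-cube in $P$ by condition~(5) of Definition~\ref{defi:cubestructure}, and this identity lifts because $Q$ carries the relevant group relations into $S_N$. Hence each geometric $l$-cube of $P$ lifts to $N$ geometric $l$-cubes of $\mathcal{X}$.

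Now define $p\colon\mathcal{X}\to P_{A_1,\dots,A_k}$ by deleting the upper index on every cell. By construction $p$ maps $j$-cells homeomorphically onto $j$-cells and is $N$-to-$1$ on cells, hence is a local homeomorphism and an $N$-sheeted covering map; the monodromy of the loop $a$ on the fibre $\{1,\dots,N\}$ over the vertex of $P$ acts by $Q(a)$, and the stabiliser of the vertex identified with the coset $H$ is exactly $H$, so $p$ is the cover associated with $H\le G$ via \cite[Theorem~1.38]{Hatcher}. As $\mathcal{X}$ is connected, its universal cover equals that of $P_{A_1,\dots,A_k}$, a product of $k$ trees of the same constant valencies, so $\mathcal{X}$ is a $k$-cube complex with exactly $N$ vertices, whose coloured edges $p^{-1}(A_i)=\{a^n:a\in A_i,\ 1\le n\le N\}$ and their incidences are as described in the statement. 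The one point that needs care, and which I expect to be the main obstacle in a full write-up, is the well-definedness of the lifted $2$- and $3$-cells, i.e.\ that lifting respects the dihedral equivalence \eqref{eq:4 relations on a and b} on squares and the identification of a $3$-cube with its eight presentations; this is uniform, since every algebraic relation underlying such a cell descends along the homomorphism $Q$, so the verification is bookkeeping rather than a new idea — in particular, unlike the two-sheeted case of Lemma~\ref{def: double cover of square}, for $N\ge 3$ there is no shortcut via a two-colour picture and one must organise the lifts of all cells through $Q$ once and for all.
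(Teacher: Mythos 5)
Your proposal is correct and follows essentially the same route as the paper: lift each cell of the one-vertex complex through $Q$, with edges $a^n$ running from $n$ to $Q(a)(n)$ and the $N$ squares over $S_O^{a,b}$ closing up because $Q(a)Q(b)=Q(b')Q(a')$, and obtain the covering map by deleting the upper indices. You in fact supply more detail than the paper does (the recursive treatment of $l$-cubes for $l\geq 3$, the verification that the index-deleting map is an $N$-sheeted cover corresponding to $H$, and the deduction that $\mathcal{X}$ is a $k$-cube complex via its universal cover), but the underlying construction is identical.
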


\begin{proof}
Let $P$ be the one-vertex complex determined by $G$. Suppose that $H$ is a subgroup of $G$ so that $(G:H)=N$. The complex $\mathcal{X}$ is constructed by associating to each square $S_O=(a,b,(a')^{-1}, (b')^{-1})$ in $P$  a total of $N$ squares
\[(Q(a), Q(b), Q((a')^{-1}), Q((b')^{-1}))\] in the new, $N$-vertex complex, with vertices labelled by elements in $\{1,2,\dots,N\}$.
 Indeed, applying $Q$ to the relation $ab=b'a'$ in $G$ gives the identity $Q(a)Q(b)=Q(b')Q(a')$ in $S_N$, which in turn yields $N$ squares in the complex $\mathcal{X}$ determined by
\[
Q(a)Q(b)(n)=Q(b')Q(a')(n), \text{ for all }n=1,\dots, N.
\]
More precisely, for each $n$, let $m=Q(a)Q(b)(n)=Q(b')Q(a')(n)$ and consider the square with labelling $(a')^n(b')^s$ in vertical-horizontal direction from vertex $n$ to vertex $m$ via vertex $s$, and with labelling $b^na^r$ in horizontal-vertical direction via vertex $r$, where $m=Q(a)(r)$, $r=Q(b)(n)$, $Q(a')(n)=s$ and $Q(b')(s)=m$, see the figure, where $1$ in the right square denotes the vertex in the complex of $G$:

\begin{equation}\label{eq:explicit N cover of squares}
\def\g#1{\save[]!C="g#1"\restore}%
\xymatrix{
\g1 \ar @{}[dr] |{S}{s}\ar[r]^{(b')^s} & {m} &\g2 \ar @{}[dr]
 |{S_O^{a,b}} {1}\ar[r]^{b} &{1}\\
{n}\ar[r]_{b^n}\ar[u]^{(a')^n} & {r}\ar[u]_{a^r} & {1}\ar[u]^{a}\ar[r]_{b'} & {1}\ar[u]_{a'}
}
\end{equation}
The covering map $p:\mathcal{X}\to P$ is defined by collapsing all squares of the form $S$ onto the given square $S_O^{a,b}$ in $P$, for each square in $P$.
\end{proof}

We extend the notions of left and right rigid  to $k$-dimensional digraphs  and $k$-graphs with more than one vertex in the natural way. The idea is that being rigid means that if two edges can form a corner (either bottom-left or top-right), then they do form a unique corner.

\begin{definition}\label{def:rigid for multiple vertices}
(a) Suppose that $\operatorname{DG}$ is a $k$-dimensional digraph. Then $\operatorname{DG}$ is right rigid if for $x\in E_i$ and $y\in E_j$ edges of distinct colours $i\neq j$ so that  $o(x)=o(y)$,  there are unique $x'\in E_i$ and $y'\in E_j$ with  $xy'\sim yx'$. Left rigid is defined in a similar way.

(b) Suppose that $(\Lambda,d)$ is a $k$-graph, and let $E_i=d^{-1}(\{e_i\})$ be its alphabets, for $1\leq i\leq k$. We say that $\Lambda$ is right rigid if for $x\in E_i$ and $y\in E_j$ with the same origin, for $i\neq j$, there are unique $x'\in E_i$ and $y'\in E_j$ with the property that $y'\circ x=x'\circ y$. Left rigidity of $\Lambda$ is defined in a similar manner.
\end{definition}

\begin{theorem}\label{thm:N-vertex k graph}
Suppose that $P$ is a one-vertex $k$-cube complex that admits an $N$-cover $p:\tilde{P}\to P$ as in Proposition~\ref{prop:N vertex complex from symmetric group},  with $\mathcal{X}=\tilde{P}$  the associated $k$-cube complex with $N$ vertices,  for $k\geq 2$ and $N\geq 2$. The following are valid.

(a) The $k$-dimensional digraph $\operatorname{DG}(\mathcal{X})$ determined in Theorem~\ref{thm:one vertex k-digraph from k-cube} is left and right rigid.

(b) The $k$-graph $\Lambda({\mathcal{X}}):=\Lambda(\operatorname{DG}({\mathcal{X}}))$ associated to $\operatorname{DG}(\mathcal{X})$ by Corollary~\ref{cor:the one vertex k-graph associated to a digraph} is strongly connected, left rigid and right rigid.
\end{theorem}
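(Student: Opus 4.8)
The plan is to reduce both parts to the one-vertex situation already handled in Lemma~\ref{lem:rigid}, exploiting that $p\colon\mathcal{X}\to P$ is a covering map, and then to transport the conclusions for the digraph $\operatorname{DG}(\mathcal{X})$ over to the $k$-graph $\Lambda(\mathcal{X})$.

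For part (a): since $p$ is a covering map (it is defined cellwise and is a local homeomorphism, as in the proof of Proposition~\ref{prop:N vertex complex from symmetric group}), it restricts near each vertex $n$ of $\mathcal{X}$ to a homeomorphism onto a neighbourhood of the vertex $v$ of $P$; in particular, for every vertex $n$, the map $p$ induces a colour-preserving isomorphism between the link of $n$ in $\mathcal{X}$ and the link of $v$ in $P$. By Lemma~\ref{lem:rigid} and its proof, the link of $v$ in $P$ has no multiple edges and, between any two distinct colour classes, is complete bipartite; hence the same is true of the link of every vertex $n$ of $\mathcal{X}$. Phrased in terms of geometric squares, this says that every bicoloured corner at a vertex $n$ of $\mathcal{X}$ — that is, every pair $\{x,y\}$ with $x\in E_i$, $y\in E_j$, $i\neq j$, and $o(x)=o(y)=n$ — is contained in exactly one geometric square of $\mathcal{X}$. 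Since, by the construction of $\operatorname{DG}(\mathcal{X})$ in Theorem~\ref{thm:one vertex k-digraph from k-cube}, the bijection $\phi$ carries a bicoloured corner to the opposite corner of this unique square, it follows that for $x,y$ of distinct colours with $o(x)=o(y)$ there are unique $x'\in E_i$ and $y'\in E_j$ with $xy'\sim yx'$; this is right rigidity in the sense of Definition~\ref{def:rigid for multiple vertices}(a), and left rigidity follows by the same argument, a geometric square being determined by any one of its four corners. (Alternatively one argues by unique lifting of cells: the unique square of $P$ containing $\{p(x),p(y)\}$ lifts uniquely along $p$ to a geometric square of $\mathcal{X}$ whose relevant corner lies over $n$, and injectivity of $p$ on the star of $n$ identifies that corner with $\{x,y\}$.)

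For part (b): the alphabets of $\Lambda(\mathcal{X})=\Lambda(\operatorname{DG}(\mathcal{X}))$ are $E_i=d^{-1}(\{e_i\})$, identified by construction with the colour classes $E_i$ of $\operatorname{DG}(\mathcal{X})$, and, by the construction of the associated category (see the proof of Theorem~\ref{thm:the category associated to a digraph} and the conventions recorded afterwards), a relation $xy'\sim yx'$ in $\operatorname{DG}(\mathcal{X})$ is exactly the composition identity $y'\circ x=x'\circ y$ in $\Lambda(\mathcal{X})$. Hence the left and right rigidity of $\Lambda(\mathcal{X})$ are immediate restatements of part (a); note that the rigidity conditions in Definition~\ref{def:rigid for multiple vertices}(b) involve only length-one data, so no question about longer paths arises. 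For strong connectedness, note that $\mathcal{X}$ is connected (it is a $k$-cube complex; equivalently, the monodromy $Q(G)$, being the regular representation of $G/H$, acts transitively on the $N$ vertices), hence its underlying undirected $1$-skeleton is connected. Since $\operatorname{DG}(\mathcal{X})$ contains both orientations of every geometric edge of $\mathcal{X}$, it is a symmetric directed graph on a connected vertex set, hence strongly connected: for any two vertices there are directed edge-paths between them in both directions. Each such path yields a nonidentity morphism in $\Lambda(\mathcal{X})$, and passing to the quotient $\C/\mathcal{R}$ never empties a nonempty hom-set; therefore $v\Lambda(\mathcal{X})w\neq\emptyset$ for all vertices $v,w$, i.e.\ $\Lambda(\mathcal{X})$ is strongly connected.

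The main obstacle is the bookkeeping in part (a): one must align the purely combinatorial rigidity conditions of the digraph — stated via corners, i.e.\ pairs of edges sharing an \emph{origin} — with the geometric fact that a covering map induces isomorphisms of links, and confirm that the observation ``the link has no multiple edges'' from Lemma~\ref{lem:rigid} (together with the link being complete between distinct colour classes) genuinely encodes both left and right rigidity. Everything else is routine: the passage from $\operatorname{DG}(\mathcal{X})$ to $\Lambda(\mathcal{X})$ is immediate from the construction of the associated category, and strong connectedness reduces to connectedness of $\mathcal{X}$ together with the symmetry (both orientations present) of $\operatorname{DG}(\mathcal{X})$.
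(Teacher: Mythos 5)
Your proposal is correct and follows essentially the same route as the paper: rigidity is obtained by transporting the ``link has no multiple edges'' property of the one-vertex complex to every vertex of the cover via the covering map, so that the argument of Lemma~\ref{lem:rigid} carries through, and strong connectedness comes from the presence of both orientations of every geometric edge in $\operatorname{DG}(\mathcal{X})$ together with connectivity of $\mathcal{X}$ (the paper phrases the latter by lifting a path of $P$ associated to a group element carrying $v$ to $w$, which is the same point as your transitivity of the monodromy $Q(G)$). No gaps.
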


\begin{proof} Part (a) follows from  Theorem~\ref{thm:one vertex k-digraph from k-cube}. Turning to part (b), to show  that $\Lambda(\mathcal{X})$ is strongly connected let $v,w$ be distinct vertices. By the construction of cover in Proposition~\ref{prop:N vertex complex from symmetric group}, there is an element of the $k$-cube group whose action on $v$ gives $w$. Associated to this element there is a path $y_1y_2\dots y_m$ in the $1$-skeleton of the $k$-complex $P$, and this has a unique lift to a path in $\mathcal{X}$  from $v=o(y_1)$ to $w=t(y_m)$.  In particular, $v_s=t(y_s),v_{s+1}=o(y_{s})$ are adjacent vertices in the complex for each $1\leq s\leq m-1$ (identifying $v=v_1$). Our definition of the $k$-dimensional digraph gives two directed edges, with opposite orientation, having source $v_s$ and terminus $v_{s+1}$, respectively the opposite, for each $s=1,\dots,m-1$. This allows to form directed paths in $\operatorname{DG}(\mathcal{X})$, hence in $\Lambda(\mathcal{X})$, from $v$ to $w$ and from $w$ to $v$, as needed.

To see that $\Lambda(\mathcal{X})$ is rigid, it suffices to note that every vertex in the cover $\tilde{P}$ has the same link as the one vertex of $P$, and in particular its link contains no multiple edges. Therefore the proof of Lemma~\ref{lem:rigid} carries through.
\end{proof}

\begin{corollary}\label{cor:aperiodic}
The $k$-graph $\Lambda(\mathcal{X})$ from Theorem~\ref{thm:N-vertex k graph}  satisfies the aperiodicity condition.
\end{corollary}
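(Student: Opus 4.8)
The plan is to deduce Corollary~\ref{cor:aperiodic} from Theorem~\ref{thm:N-vertex k graph} in essentially the same way Corollary~\ref{cor:aperiodicity} was deduced from Lemma~\ref{lem:rigid}, the only new ingredient being that $\Lambda(\mathcal{X})$ now has more than one vertex, so we must check the aperiodicity condition at \emph{each} vertex rather than at a single one, and we must ensure that the rigidity-based machinery of \cite{LV} still applies. First I would recall that by Theorem~\ref{thm:N-vertex k graph}(b) the $k$-graph $\Lambda(\mathcal{X})$ is strongly connected and both left and right rigid. Rigidity of a $k$-graph (even with multiple vertices, as in Definition~\ref{def:rigid for multiple vertices}) is exactly the hypothesis needed to invoke \cite[Corollary 11.10 and Lemma 4.15]{LV}, which tell us that $\Lambda(\mathcal{X})$ is effective and hence admits an aperiodic infinite path. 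So the core assertion is: $\Lambda(\mathcal{X})$ is effective.

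The next step is to promote the existence of \emph{one} aperiodic infinite path to the existence of an aperiodic path with range $v$ for \emph{every} vertex $v\in\Lambda(\mathcal{X})^0$; this is precisely the aperiodicity condition (A) as recalled before Definition~\ref{def:left right rigid digraph}. Here I would use strong connectedness from Theorem~\ref{thm:N-vertex k graph}(b): given an aperiodic infinite path $\omega$ with $r(\omega)=w$, and given an arbitrary vertex $v$, there is a finite path $\lambda$ in $\Lambda(\mathcal{X})$ with $r(\lambda)=v$ and $s(\lambda)=w$ (because $v\Lambda(\mathcal{X})w\neq\emptyset$). Then $\lambda\omega$ is an infinite path with range $v$, and prepending a finite path does not destroy aperiodicity — the tail of $\lambda\omega$ agrees with (a shift of) $\omega$, and aperiodicity of an infinite path is a tail property in the sense of the definition (for any $q\in\N^k$ and $p\in\Z^k\setminus\{0\}$ one finds a suitable $(m,n)\in\Omega_k$ by pushing far enough along the path into the aperiodic part of $\omega$). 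Hence every vertex is the range of an aperiodic infinite path, which is condition (A).

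The main obstacle — really the only delicate point — is justifying that rigidity in the multi-vertex sense of Definition~\ref{def:rigid for multiple vertices}(b) still feeds into \cite[Corollary 11.10, Lemma 4.15]{LV} in the same way it did in the one-vertex (monoid) case of Corollary~\ref{cor:aperiodicity}. In that corollary $\Lambda(P)$ was a $k$-\emph{monoid}, so the cited results applied directly; now $\Lambda(\mathcal{X})$ is a genuine small category. I would argue that the relevant results of \cite{LV} are stated, or routinely extend, to rigid $k$-graphs (small categories) and not merely $k$-monoids, since the notion of rigidity there is local — it concerns completions of corners at a fixed vertex — and effectiveness plus the existence of aperiodic paths is a statement that localises over the (finite) vertex set. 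Alternatively, and perhaps more cleanly, one can bypass \cite{LV} entirely and argue directly: rigidity forces, at each vertex, a unique "square-completion," and combined with the hypothesis $L_i\geq 2$ (so that each colour $i$ admits at least two distinct edges out of each vertex, by the cover construction in Proposition~\ref{prop:N vertex complex from symmetric group}) one can explicitly build an aperiodic path at each vertex by a diagonal argument choosing, infinitely often, an edge that differs from any prescribed periodic pattern. I expect writing the clean reduction through \cite{LV} to be the economical route, with strong connectedness doing the work of spreading aperiodicity across all $N$ vertices.

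\begin{proof}
By Theorem~\ref{thm:N-vertex k graph}(b), the $k$-graph $\Lambda(\mathcal{X})$ is strongly connected and both left and right rigid in the sense of Definition~\ref{def:rigid for multiple vertices}(b). Rigidity is exactly the hypothesis of \cite[Lemma 4.15 and Corollary 11.10]{LV}, so $\Lambda(\mathcal{X})$ is effective and therefore admits an aperiodic infinite path $\omega$, say with $r(\omega)=w\in\Lambda(\mathcal{X})^0$. Now let $v\in\Lambda(\mathcal{X})^0$ be arbitrary. Since $\Lambda(\mathcal{X})$ is strongly connected, $v\Lambda(\mathcal{X})w\neq\emptyset$; pick a finite path $\lambda$ with $r(\lambda)=v$ and $s(\lambda)=w$, and form the infinite path $\lambda\omega$ with $r(\lambda\omega)=v$. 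The path $\lambda\omega$ is again aperiodic: given $q\in\N^k$ and $p\in\Z^k\setminus\{0\}$, choose $(m,n)\in\Omega_k$ with $m$ coordinatewise large enough that $m\geq d(\lambda)$ and $m+p\geq 0$ and so that the corresponding $(m',n')\in\Omega_k$ obtained by shifting past $\lambda$ witnesses aperiodicity of $\omega$; then $\lambda\omega(m+p+q,n+p+q)\neq\lambda\omega(m+p,n+p)$. Hence every vertex of $\Lambda(\mathcal{X})$ is the range of an aperiodic infinite path, so $\Lambda(\mathcal{X})$ satisfies the aperiodicity condition (A).
\end{proof}
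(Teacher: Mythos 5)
Your proof is correct and follows essentially the same route as the paper, which likewise deduces the statement from rigidity of $\Lambda(\mathcal{X})$ (Theorem~\ref{thm:N-vertex k graph}) together with \cite[Lemma 4.15 and Corollary 11.10]{LV}. The only difference is that you make explicit the propagation of an aperiodic path to every vertex via strong connectedness, a step the paper leaves implicit in the phrase ``based at a given vertex is guaranteed as in the one-vertex case.''
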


\begin{proof}
Since $\Lambda(\mathcal{X})$ is rigid, the existence of an aperiodic path in $\Lambda(\mathcal{X})$ based at a given vertex is guaranteed as in the one-vertex case, see \cite[Lemma 4.15 and Corollary 11.10]{LV}.
\end{proof}

\begin{proposition}\label{prop:purely infinite multiple vertices}
Assume the hypotheses of Theorem~\ref{thm:N-vertex k graph}, where $P=P_{A_1,\dots,A_k}$ with $A_i$ given as in Lemma~\ref{lem:rigid} for $i=1,\dots,k$. If $|A_i|\geq 2$ for all $i=1,\dots,k$, then every vertex $\Lambda(\mathcal{X})$ supports at least two cycles of each colour. In particular,
$C^*(\Lambda(\mathcal{X}))$ is purely infinite and therefore classifiable by the Kirchberg-Phillips classification theory.
\end{proposition}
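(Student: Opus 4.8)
The plan is to run the argument of Proposition~\ref{prop:purely infinite} verbatim, now at each of the $N$ vertices of $\mathcal{X}=\tilde P$, using that the cover is a local homeomorphism that preserves colours. First I would recall that, by construction of the $N$-cover in Proposition~\ref{prop:N vertex complex from symmetric group}, each geometric edge of colour $i$ in $P$ lifts to $N$ geometric edges of colour $i$ in $\mathcal{X}$, and for any vertex $n$ of $\mathcal{X}$ and any $a\in A_i$ there is exactly one lift starting at $n$, namely the edge $a^{n}$ running from $n$ to $Q(a)(n)$. In $\operatorname{DG}(\mathcal{X})$ each such geometric edge becomes two directed edges of colour $i$ with opposite orientations; write $\mathfrak a^{n}\in E_i$ for the directed edge with $o(\mathfrak a^{n})=n$ and $t(\mathfrak a^{n})=Q(a)(n)$, and $(\mathfrak a^{n})^{\rm op}$ for the reverse, which has origin $Q(a)(n)$ and terminus $n$. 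Note $(\mathfrak a^{n})^{\rm op}$ is the directed edge of colour $i$ at $Q(a)(n)$ coming from the lift of $a^{-1}$, since $a^{-1}=a_{L_i+r}^i$ labels the same geometric edge with opposite orientation.

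Next, fix $i$ with $|A_i|\ge 2$, so $L_i\ge 2$, and fix a vertex $v$ of $\mathcal{X}$. Using the notation of Lemma~\ref{lem:rigid}, $A_i=\{a_1^i,\dots,a_{2L_i}^i\}$ with $a_r^i a_{L_i+r}^i=1$. Consider the lift $\mathfrak a_1^{i,v}\in E_i$ of $a_1^i$ starting at $v$; it ends at $w:=Q(a_1^i)(v)$, and the lift of $a_{L_i+1}^i$ starting at $w$ ends back at $v$. Hence $\mu:=(\mathfrak a_{L_i+1}^{i,w})\circ(\mathfrak a_1^{i,v})$ is a cycle of length two in colour $i$ based at $v$ in $\Lambda(\mathcal{X})$, with $d(\mu)=2e_i$. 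Similarly, using $a_2^i$ in place of $a_1^i$ gives a second cycle $\nu$ of length two at $v$ in colour $i$; since the link of every vertex of $\mathcal{X}$ equals the link of the single vertex of $P$ and so contains no multiple edges (as in the proof of Theorem~\ref{thm:N-vertex k graph}(b)), the edge $\mathfrak a_2^{i,v}$ is distinct from $\mathfrak a_1^{i,v}$. Thus $\alpha:=\mathfrak a_2^{i,v}$ is an element of $s(\mu)\Lambda$ with $0\ne d(\alpha)=e_i\le d(\mu)$ whose range is $v=r(\mu)$ but which differs from the initial segment $\mathfrak a_1^{i,v}$ of $\mu$ of degree $e_i$. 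By the criterion of \cite[Proposition 8.8]{Sim}, $\mu$ is a cycle with an entrance. Running this for every vertex $v$ shows every vertex of $\Lambda(\mathcal{X})$ lies on a cycle with an entrance; in fact every vertex supports at least two cycles of each colour $i$ (repeat the argument for each $i$).

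It remains to assemble the hypotheses for pure infiniteness and classification. The $k$-graph $\Lambda(\mathcal{X})$ is row-finite and source free by Corollary~\ref{cor:the one vertex k-graph associated to a digraph}, it is cofinal and indeed strongly connected by Theorem~\ref{thm:N-vertex k graph}(b), and it satisfies the aperiodicity condition by Corollary~\ref{cor:aperiodic}; hence $C^*(\Lambda(\mathcal{X}))$ is simple by \cite[Proposition 4.8]{KP}. Combined with the fact just proved that every vertex can be reached from a cycle with an entrance, \cite[Proposition 8.8]{Sim} gives that $C^*(\Lambda(\mathcal{X}))$ is purely infinite. It is unital since $\Lambda^0$ is finite, nuclear and separable as a graph $C^*$-algebra, and satisfies the UCT; therefore by \cite[Corollary 8.15]{Sim} (cf. \cite[Remark 5.2]{E}) it falls under the Kirchberg--Phillips classification \cite{Phi} and is classified by its $K$-theory. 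The only genuinely new point over Proposition~\ref{prop:purely infinite} is checking that the two length-two cycles at a fixed vertex $v$ of the cover are well-defined and distinct, which is exactly where one uses that $p$ is colour-preserving and that every vertex of $\mathcal{X}$ inherits the multiplicity-free link of $P$; I expect this bookkeeping with the lifted labels $\mathfrak a_r^{i,v}$ to be the main (though routine) obstacle.
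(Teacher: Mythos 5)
Your argument is correct and follows essentially the same route as the paper's proof: produce two distinct length-two cycles of each colour at every vertex of the cover (the paper does this by noting each lifted edge lies on a length-two cycle, you do it more explicitly via the lifts $\mathfrak a_r^{i,v}$ determined by $Q$), observe that one supplies an entrance to the other, and then combine strong connectedness, aperiodicity and \cite[Proposition 8.8 and Corollary 8.15]{Sim} with \cite{Phi}. One cosmetic slip: $|A_i|\geq 2$ gives $L_i\geq 1$, not $L_i\geq 2$, but your construction only uses two distinct elements $a_1^i\neq a_2^i$ of $A_i$, which exist under the stated hypothesis, so nothing is affected.
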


\begin{proof}
Let  $\Lambda(\mathcal{X})^0$ denote the vertices, or identities, in our $k$-graph. Since every geometric edge in $P$ gives rise to $N$ geometric edges in $\tilde{P}$, we have that for each colour $i\in \{1,\dots,k\}$, every vertex $v\in \Lambda(\mathcal{X})^0$ admits $N|A_i|$ incident edges, namely edges with origin or terminus $v$. Furthermore, by our construction of $\Lambda(\mathcal{X})$ we also know that
each edge is contained in a length-two cycle. Thus, for a given $v\in \Lambda(\mathcal{X})^0$, there are at least two cycles $\mu=x_2\circ x_1$ and $\nu=x_4\circ x_3$ based at $v$ and consisting of edges of colour $i$, with the terminus of $x_1$ possibly distinct from the terminus of $x_3$. Then $x_4$ is an entry to $\mu$ of smaller degree and not already contained in $\mu$. In this consideration the vertex $v$ already supports a cycle with an entrance, but since our $k$-graph is strongly connected we could have chosen a cycle $\mu$ based at a different vertex $w$ and apply the same consideration.
Now  \cite[Proposition 8.8  and Corollary 8.15]{Sim} apply to give the claimed conclusion.
\end{proof}

We next illustrate our construction of $k$-graphs with more than one vertex with an explicit example of an infinite family of $k$-graphs with two vertices, for all $k\geq 2$. The construction was partly outlined in \cite[Section 8.1]{LSV}, as corresponding to the uniform labelling $l_u$, and explicit factorisation rules of the $2$-vertex graph were given in the case of the mixed labelling $l_m$. Here we describe completely the case $l_u$ as an application of Theorem~\ref{thm:N-vertex k graph}.

\begin{proposition}\label{prop:construct k graphs with arbitrary spectral radii}
For $k\geq 2$ and any $k$-tuple $(L_1,\dots,L_k)$ of positive integers there exists an aperiodic strongly connected $2$-vertex $k$-rank graph $\Lambda$ with $\vert v\Lambda^{e_i}w \vert= 2L_i$, where $v$ and $w$ are the vertices in $\Lambda$ and $i= 1,\dots,k$.
\end{proposition}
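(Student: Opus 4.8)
The plan is to realize the desired $k$-graph as $\Lambda(\mathcal{X})$ for an explicit $2$-vertex $k$-cube complex $\mathcal{X}$, obtained as a double cover of a suitable one-vertex $k$-cube complex $P$, and then invoke Theorem~\ref{thm:N-vertex k graph} and Corollary~\ref{cor:aperiodic} to get the properties. First I would choose the one-vertex $k$-cube complex: take $G$ to be the product group $\bZ^k$ with generating sets $A_i=\{a_i,a_i^{-1}\}$ for $i=1,\dots,k$ if one only wanted valency $2$ in each colour, but since we need $\vert v\Lambda^{e_i}w\vert = 2L_i$ we instead need a one-vertex $k$-cube complex whose colour-$i$ edge set has size $2L_i$. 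The natural candidate is the $k$-cube group built from $k$ copies of a one-dimensional "rose" in each direction with $L_i$ loops, i.e. the $k$-cube group with $A_i$ of cardinality $2L_i$ and commuting relations $a^{i}_r a^{j}_s = a^{j}_s a^{i}_r$ for all $i\neq j$, $r,s$; concretely $G = F_{L_1}\times\cdots\times F_{L_k}$ (a product of free groups), which acts freely and transitively on the product of $k$ regular trees of valencies $2L_1,\dots,2L_k$. This $G$ has an obvious $k$-cube structure $(A_1,\dots,A_k)$ in the sense of Definition~\ref{defi:cubestructure}: involutions are $a^i_r\mapsto (a^i_r)^{-1}$, the union generates, the product sets $A_iA_j=A_jA_i$ have the right size because the generators in distinct directions commute, and there is no $2$-torsion since $G$ is torsion-free; condition (5) holds by the tree-product action.

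Next I would produce the $2$-cover. Apply Proposition~\ref{prop: double cover og k cube group} (equivalently Lemmas~\ref{def: double cover of square} and \ref{lem:2-vertex 3-complex}) to $P=P_{A_1,\dots,A_k}$ to obtain an explicit double cover $p\colon\tilde{P}\to P$ with $\tilde{P}$ a $k$-cube complex on two vertices $v,w$; set $\mathcal{X}=\tilde{P}$. Alternatively one can describe this cover directly via Proposition~\ref{prop:N vertex complex from symmetric group} with $N=2$, taking $H$ to be the kernel of the homomorphism $G\to\bZ/2\bZ$ sending every generator $a^i_r$ to $1$, so that $Q(a^i_r)$ is the nontrivial transposition in $S_2$ for every $i$ and $r$ (this is the "uniform labelling" $l_u$ of \cite[Section 8.1]{LSV}). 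Then every edge of colour $i$ in $P$ lifts to two edges in $\mathcal{X}$, one from $v$ to $w$ and one from $w$ to $v$, and each geometric edge of colour $i$ of $P$ (there are $2L_i$ of them, in $L_i$ involution pairs) yields $2L_i$ geometric edges of colour $i$ in $\mathcal{X}$; hence $\vert v\Lambda(\mathcal{X})^{e_i}w\vert = 2L_i$ and likewise $\vert w\Lambda(\mathcal{X})^{e_i}v\vert = 2L_i$, while $\vert v\Lambda(\mathcal{X})^{e_i}v\vert = \vert w\Lambda(\mathcal{X})^{e_i}w\vert = 0$ — precisely the incidence data asserted.

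Finally I would assemble the conclusion. By Corollary~\ref{cor:the one vertex k-graph associated to a digraph} applied to the $k$-cube complex $\mathcal{X}$ (via the $k$-dimensional digraph $\operatorname{DG}(\mathcal{X})$ of Theorem~\ref{thm:one vertex k-digraph from k-cube}) we obtain a $k$-graph $\Lambda:=\Lambda(\mathcal{X})$ whose vertex set is $\{v,w\}$ and whose coordinate matrices $M_i$ record exactly the edge incidences computed above, namely $M_i=\begin{pmatrix}0 & 2L_i\\ 2L_i & 0\end{pmatrix}$. Theorem~\ref{thm:N-vertex k graph}(b) gives that $\Lambda$ is strongly connected and left/right rigid, and Corollary~\ref{cor:aperiodic} gives the aperiodicity condition. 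This establishes all requirements of the proposition. The main thing to verify carefully — and the only genuine obstacle — is that the product-of-free-groups $G = F_{L_1}\times\cdots\times F_{L_k}$ really is a $k$-cube group with the stated valency vector, i.e. that properties (1)–(5) of Definition~\ref{defi:cubestructure} hold with $\#A_i = 2L_i$; once that is in place everything else is a direct application of the already-proven machinery. (If one prefers to avoid checking Definition~\ref{defi:cubestructure} for this specific $G$, one can instead cite the construction of one-vertex $k$-cube complexes from $k$ one-dimensional factors, building $\mathcal{X}$ inductively on the dimension of cells as in \cite[Definition 2.4]{MRV} and Remark~\ref{rmk:k cube complkexes versus groups}, so that the square complexes $S_{A_i,A_j}$ are precisely the standard product VH-complexes with valency vector $(2L_i,2L_j)$.)
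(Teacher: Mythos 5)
Your proposal is correct and follows essentially the same route as the paper: take $P$ to be the one-vertex $k$-cube complex of the product of free groups $F_{L_1}\times\cdots\times F_{L_k}$ (a wedge of circles with tori glued along the commutation relations), pass to the double cover of Proposition~\ref{prop: double cover og k cube group}, and invoke Theorem~\ref{thm:N-vertex k graph} and Corollary~\ref{cor:aperiodic}. The extra details you supply (checking Definition~\ref{defi:cubestructure} for the product of free groups, identifying the cover via the homomorphism onto $\Z/2\Z$, and writing out the coordinate matrices) are consistent with, and slightly more explicit than, the paper's argument.
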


\begin{proof}
 Fix $k\geq 2$ and for each $i=1,\dots,k$ let $\mathcal{L}_i$ be an alphabet with $L_i$ letters.
 Let $\mathbb{F}_i=\mathbb{F}_{L_i}$ be the free group generated by $\mathcal{L}_i$ for each $i=1,\dots,k$. The product group $\mathbb{F}_{1}\times \cdots \times \mathbb{F}_{k}$ acts simply and transitively on the product of trees $\mathcal{T}_{2L_1}\times \dots\times\mathcal{T}_{2L_k}$, and yields in the quotient a complex $P$ with one vertex and skeleton a wedge of $\sum_{i=1}^k L_i$ circles. The $2$-cells in $P$ arise from pairs
 $a\in \mathcal{L}_i$, $b\in \mathcal{L}_j$ for $i\neq j$ with the commutation relation $ab=ba$ as in Example~\ref{ex:complex from torus}; for each such pair, there is a torus glued to the wedge of circles.

Let $\tilde{P}$ be the associated $2$-cover from Proposition~\ref{prop: double cover og k cube group}. By Theorem~\ref{thm:N-vertex k graph} and Corollary~\ref{cor:aperiodic} there is a $k$-graph $\Lambda:=\Lambda(\tilde{P})$ with the desired property:  for each geometric edge in $P$, say  having label $a\in \mathcal{L}_i$, there are two geometric edges labelled $a^1$ and $a^2$ in $\tilde{P}$, and each of these gives exactly one edge in the associated $k$-graph $\Lambda$ between the two vertices.
\end{proof}

\begin{example} To illustrate Proposition~\ref{prop:purely infinite multiple vertices} and Proposition~\ref{prop:construct k graphs with arbitrary spectral radii}, suppose that $k=2$ and $L_1=L_2=1$. The associated $\Lambda$ has $1$-skeleton the graph with two vertices in figure 6, where we identify $v$ as vertex $1$ and $w$ as vertex $2$. If we view the coloured edges in direction $e_1\in \N^2$ as labelled by $\mathcal{L}_1$ and in direction $e_2\in \N^2$ to be labelled by $\mathcal{L}_2$, then by Proposition~\ref{prop:construct k graphs with arbitrary spectral radii} we have
\[
w\Lambda^{e_1}v=\{a^1,\bar{a^2}\},\, w\Lambda^{e_2}v=\{b^1,\bar{b^2}\},\, v\Lambda^{e_1}w=\{a^2,\bar{a^1}\} \text{ and } v\Lambda^{e_1}w=\{b^2,\bar{b^1}\}.
\] The 8 factorisation rules are:
\begin{align*}
a^1b^2=b^1a^2,&a^2\bar{b^2}=\bar{b^1}a^1,\bar{a^1}b^1=b^2\bar{a^2},\bar{a^2}\bar{b^2}=\bar{b^1}\bar{a^1}\\
a^2b^1=b^2a^1,&a^1\bar{b^1}=\bar{b^2}a^2, \bar{a^2}b^2=b^1\bar{a^1},\bar{a^1}\bar{b^1}=\bar{b^2}\bar{a^2}.
\end{align*}
If $k=2$ and $L_1=L_2=2$, then the corresponding $2$-graph on two vertices has the same 1-skeleton, and for example  $|v\Lambda^{e_1}w|=|w\Lambda^{e_1}v|=4$, and similarly in colour $e_2$.
\end{example}

\section{Applications}

\subsection{Von Neumann algebras from strongly connected $k$-graphs}
We now present a large supply of von Neumann type $\mathrm{III}_\lambda$ factors from $k$-graphs as in \cite{LLNSW}, for infinitely many values of $\lambda$ in $(0,1]$. We start with some preparation.

We refer to \cite[Section 6]{RSV} for the notion of adjacency operator in $i$-direction for a $k$-cube complex, where $i\in \{1,\dots,k\}$. The basic ingredients are as follows: Let $X$ be a $k$-cube complex with vertex set (of its $1$-skeleton) denoted $X_0$ and with universal cover a product $\RT_1\times\RT_2\times\dots \times\RT_k$ of regular trees.  For each $i=1,\dots ,k$ and $V,W\in X_0$, we write  $V \sim_i W$ if the two vertices in the complex  are adjacent in the $i$-direction of $X$. The \textbf{adjacency operator $\mathcal{A}_i$ in $i$-direction} is defined  on $L^2(X_0) $ by
\[
\mathcal{A}_i(f)(V) = \sum_{W  \sim_i V} f(W).
\]
Since all complexes considered here are locally finite in a strong sense, meaning that at every vertex there are finitely many edges in each direction $i$, or of each colour $i$, for $i\in \{1,\dots,k\}$, the operators $\mathcal{A}_i$ become $|X_0|\times |X_0|$ matrices. It was further observed in \cite[Remark 6.4]{RSV} that whenever each pair of edges starting at a vertex of $X$ in direction ${i,j}$, with $i\neq j$, belong to a unique square in $X$, then $\mathcal{A}_i$ and $\mathcal{A}_j$ commute.

\begin{proposition}\label{prop:spectral radii}
 Let $\mathcal{X}$ be a  $k$-cube complex with $N$ vertices covered by a cartesian product of $k$ trees with valencies $n_1,n_2,\dots, n_k\in \mathbb{Z}^+$, respectively, where $k\geq 2$ and $N\geq 1$. Let $\Lambda(\mathcal{X})$ be the associated $k$-graph as in Theorem~\ref{thm:N-vertex k graph}. Then, with the notation of subsection~\ref{subs: higher rank graphs}, we have
 \begin{equation}\label{eq:spectral-radius-from valencies}
     \rho(\Lambda(\mathcal{X}))=(n_1,n_2,\dots,n_k).
 \end{equation}
 \end{proposition}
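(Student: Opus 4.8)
The plan is to reduce the claim to the single-matrix statement $\rho(M_i)=n_i$ for each $i=1,\dots,k$, where $M_1,\dots,M_k$ are the coordinate matrices of $\Lambda(\mathcal{X})$ from subsection~\ref{subs: higher rank graphs}, since by definition $\rho(\Lambda(\mathcal{X}))=(\rho(M_1),\dots,\rho(M_k))$. First I would observe, using the construction of $\Lambda(\mathcal{X})$ from Corollary~\ref{cor:the one vertex k-graph associated to a digraph} when $N=1$ and from Theorem~\ref{thm:N-vertex k graph} when $N\ge 2$, that each geometric edge of colour $i$ in $\mathcal{X}$ yields exactly two edges of $\operatorname{DG}(\mathcal{X})$ of colour $i$ with opposite orientations, and these are precisely the degree-$e_i$ morphisms of $\Lambda(\mathcal{X})$. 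Hence $M_i(V,W)=|V\Lambda(\mathcal{X})^{e_i}W|$ counts the oriented colour-$i$ edges of $\mathcal{X}$ from $V$ to $W$; in other words $M_i$ is precisely the adjacency operator $\mathcal{A}_i$ in the $i$-direction, a symmetric nonnegative integer matrix.

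The heart of the argument is to show that every row sum of $M_i$ equals $n_i$. Since $\mathcal{X}$ is a $k$-cube complex, its universal covering map $\RT_1\times\cdots\times\RT_k\to\mathcal{X}$ is a local isomorphism of cube complexes, with $\RT_i$ the regular tree of valency $n_i$; therefore every vertex $V$ of $\mathcal{X}$ is the origin of exactly $n_i$ oriented edges of colour $i$, since this is true of a vertex of $\RT_1\times\cdots\times\RT_k$ and the local isomorphism respects colours and orientations. In the one-vertex case this is just $\#A_i=2L_i=n_i$; in the $N$-vertex case, each of the $n_i$ colour-$i$ edges at the base vertex of $P$ lifts, under the covering of Proposition~\ref{prop:N vertex complex from symmetric group} (built on Lemmas~\ref{def: double cover of square} and \ref{lem:2-vertex 3-complex}), to exactly one colour-$i$ edge at each vertex upstairs. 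Consequently $\sum_{W\in X_0}M_i(V,W)$ equals the number of oriented colour-$i$ edges of $\mathcal{X}$ with origin $V$, which is $n_i$, for every $V\in X_0$.

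To conclude, I would invoke the elementary fact (a consequence of Perron--Frobenius, or of the bound $\rho(B)\le\|B\|_\infty$ for the maximal absolute row sum together with $B\mathbf{1}=n_i\mathbf{1}$ for the all-ones vector $\mathbf{1}\in L^2(X_0)$) that a nonnegative matrix with all row sums equal to $n_i$ has spectral radius exactly $n_i$. Applying this to each $M_i$ gives $\rho(\Lambda(\mathcal{X}))=(n_1,\dots,n_k)$. The only delicate point is the row-sum computation, i.e.\ checking that in an $N$-vertex $k$-cube complex every vertex really does carry exactly $n_i$ edges of colour $i$; this rests on the local-isomorphism property of the covering maps produced earlier and on the definition of a $k$-cube complex as being covered by a product of regular trees. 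Everything else is routine.
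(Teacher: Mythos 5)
Your proposal is correct and follows essentially the same route as the paper: identify each coordinate matrix $M_i$ with the adjacency operator $\mathcal{A}_i$ in the $i$-direction, note that every vertex carries exactly $n_i$ oriented colour-$i$ edges because the universal cover is a product of regular trees, and conclude that the constant row sum $n_i$ is the spectral radius. Your write-up merely makes explicit the row-sum/Perron--Frobenius step and the local-isomorphism justification that the paper leaves implicit.
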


\begin{proof} The assumption on $\mathcal{X}$ says that there are $n_i$ edges (disregarding orientation) of colour $i$ for each $i=1,\dots ,k$. The graph $\Lambda(\mathcal{X})$ is constructed by assigning two edges in its skeleton, of opposite orientation, for each geometric edge in $\mathcal{}X$. Therefore, if $M_i$ denotes the coordinate matrix of $\Lambda(\mathcal{X})$ in colour $i$, we have that $M_i$ is the same as the adjacency operator in $i$-direction $\mathcal{A}_i$. Thus it is a symmetric matrix with largest positive eigenvalue equal to the valency of the tree in colour $i$. Hence, $\rho(M_i)=n_i$ for each $i=1,\dots,k$, as claimed.
\end{proof}

\begin{remark}
The graph of Proposition~\ref{prop:construct k graphs with arbitrary spectral radii} satisfies $\rho(\Lambda)=(2L_1,\dots,2L_k)$.
\end{remark}

Given a strongly connected $k$-graph $\Lambda$, it was shown in  \cite[Corollary~4.6]{AHLRS2} that $C^*(\Lambda)$  admits KMS states at inverse temperature $\beta=1$ for the one-parameter action $\alpha\colon\mathbb{R} \to \operatorname{Aut} C^*(\Lambda)$, the so-called preferred dynamics, characterised by $\alpha_t(\mathbf{s}_\mu) = e^{it\log\rho(\Lambda)\cdot d(\mu)} \mathbf{s}_\mu$, $t\in \mathbb{R}$, $\mu\in \Lambda$.  Following \cite{LLNSW}, define $\mathcal{S} := \{\rho(\Lambda)^{d(\mu) - d(\nu)} \mid \mu,\nu \in \Lambda\text{ are cycles}\}$ and  let $\lambda := \sup\{s \in \mathcal{S} \mid s < 1\}$. By the main result of \cite{LLNSW}, Theorem~3.1, we have  $\lambda \in (0,1]$ and the von Neumann algebra generated by the image of $C^*(\Lambda)$ in the GNS representation $\pi_\varphi$ corresponding to an extremal KMS state $\varphi$ is the
    injective type~$\mathrm{III}_\lambda$ factor.

    Our application here is motivated by \cite[Example 7.7]{LLNSW}, see also \cite{O} and \cite{Y2}. It consists of producing an infinite family of von Neumann factors $(\pi_\varphi(C^*(\Lambda)))''$ of type $\mathrm{III}_\lambda$ associated to $k$-graphs in this fashion. Recall from \cite[Section 6]{LLNSW} that the group of periods of a strongly connected graph $\Lambda$ is defined as $\mathcal{P}_\Lambda = \mathcal{P}_v^+ - \mathcal{P}_v^+$, where for an arbitrary vertex $v\in \Lambda^0$, $\mathcal{P}_v^+ $ is the subsemigroup $d(v\Lambda v)$ of $\N^k$. Equivalently, $
\mathcal{P}_\Lambda$ is the subgroup of $\Z^k$ determined as $ \{d(\mu) - d(\nu) \mid \mu,\nu\text{ are cycles in $\Lambda$}\}$. As shown in \cite[Theorem 7.3]{LLNSW}, the set $\mathcal{S}$ above is the closure inside the positive real half-line of the set $\{\rho(\Lambda)^g\mid g\in \mathcal{P}_\Lambda \}$.

\begin{corollary}\label{cor:von Neumann factors}
For $k\geq 2$ and any $k$-tuple $(L_1,\dots,L_k)$ of positive integers, let $\Lambda$ be the $k$-graph with two vertices from Proposition~\ref{prop:construct k graphs with arbitrary spectral radii}. There is a $\mathrm{III}_\lambda$ von Neumann factor $(\pi_\varphi(C^*(\Lambda)))''$, where $\pi_\varphi$ is the GNS representation of $C^*(\Lambda)$ corresponding to an extremal KMS$_1$ state $\phi$, and the type is determined as
\[
\lambda=\operatorname{sup}\{(2L_1)^{m_1}(2L_2)^{m_2}\dots (2L_k)^{m_k}\mid (m_1,m_2,\dots,m_k)\in \mathcal{P}_\Lambda\}\cap(0,1].
\]
In particular, if $L_1=\cdots=L_k=L$, then $\lambda=(2L)^{-2}$.
\end{corollary}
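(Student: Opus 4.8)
The plan is to apply the general machinery already set up: by Proposition~\ref{prop:construct k graphs with arbitrary spectral radii} and Corollary~\ref{cor:aperiodic} we have an aperiodic strongly connected $2$-vertex $k$-graph $\Lambda$, and by Proposition~\ref{prop:spectral radii} (or the remark following it) we have $\rho(\Lambda)=(2L_1,\dots,2L_k)$. Since $\Lambda$ is strongly connected, \cite[Corollary~4.6]{AHLRS2} gives a KMS$_1$ state for the preferred dynamics, and \cite[Theorem~3.1]{LLNSW} identifies $(\pi_\varphi(C^*(\Lambda)))''$ with the injective type $\mathrm{III}_\lambda$ factor where $\lambda=\sup\{s\in\mathcal{S}\mid s<1\}$ and, by \cite[Theorem~7.3]{LLNSW}, $\mathcal{S}$ is the closure in $(0,\infty)$ of $\{\rho(\Lambda)^g\mid g\in\mathcal{P}_\Lambda\}$. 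Substituting $\rho(\Lambda)=(2L_1,\dots,2L_k)$ yields exactly the displayed formula for $\lambda$. So the first block of the proof is just citing these results and unwinding definitions.

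The real content is the last sentence: when $L_1=\cdots=L_k=L$, showing $\lambda=(2L)^{-2}$. For this I need to compute the group of periods $\mathcal{P}_\Lambda\subseteq\mathbb{Z}^k$. First I would observe that for $L=1$ the underlying complex before taking the double cover is a product of $k$ circles (a $k$-torus), and in general $P$ is a one-vertex complex whose $1$-skeleton is a wedge of circles with tori glued on via the commutation relations $ab=ba$; its fundamental group is $\mathbb{F}_{L_1}\times\cdots\times\mathbb{F}_{L_k}$. The key computation is: in the $2$-vertex cover $\tilde P$, what are the degrees of cycles (closed paths) based at a vertex? Using the explicit double cover of Proposition~\ref{prop: double cover og k cube group}, an edge labelled $a\in\mathcal{L}_i$ lifts to $a^1$ (from vertex $1$ to vertex $2$) and $a^2$ (from $2$ to $1$), each of degree $e_i$. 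A closed path based at vertex $1$ must traverse an even number of edges of odd total "parity-changing" count — more precisely, the covering map to the two-point space $\{1,2\}$ forces the total number of edges in a cycle to be even, but that alone only gives $\sum_i n_i(\mu)\equiv 0\pmod 2$ where $n_i(\mu)$ counts colour-$i$ edges. I would argue that in fact each individual $n_i(\mu)$ can be taken to realise any value consistent with a single global parity constraint: a cycle $a^1 a^2$ has degree $2e_i$, a cycle $a^1 b^2 a^2 b^1$-type path (using the factorisation rules) has degree $e_i+e_j+e_i+e_j$... Let me instead argue directly that $\mathcal{P}_v^+=d(v\Lambda v)$ consists of all $(n_1,\dots,n_k)\in\mathbb{N}^k$ with $\sum_i n_i$ even, hence $\mathcal{P}_\Lambda=\{g\in\mathbb{Z}^k\mid \sum_i g_i\text{ even}\}$. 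One inclusion is immediate from the two-colouring of vertices; the reverse follows by exhibiting, for each colour $i$, the cycle of degree $2e_i$, and for each pair $i\ne j$ a cycle of degree $e_i+e_j$ (go $1\xrightarrow{a^1}2\xrightarrow{b^2}1$), and noting these generate the index-two subgroup.

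Granting $\mathcal{P}_\Lambda=\{g\in\mathbb{Z}^k:\textstyle\sum g_i\text{ even}\}$ and $\rho(\Lambda)^g=(2L)^{\sum_i g_i}$ when all $L_i=L$, the set $\{\rho(\Lambda)^g\mid g\in\mathcal{P}_\Lambda\}$ equals $\{(2L)^{2m}\mid m\in\mathbb{Z}\}$, whose closure in $(0,\infty)$ is itself, and whose supremum below $1$ is $(2L)^{-2}$ (since $2L\ge 2>1$). This gives $\lambda=(2L)^{-2}$, completing the proof. I expect the main obstacle to be the computation of $\mathcal{P}_\Lambda$: one must verify carefully, using the explicit factorisation rules of the $2$-vertex cover in Proposition~\ref{prop:construct k graphs with arbitrary spectral radii} and the structure of the double cover, that no finer congruence obstruction appears beyond the single global parity — i.e. that the parity subgroup is exactly hit and not a proper sub-subgroup thereof. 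The existence of the length-two cycles $x_2\circ x_1$ in each colour (established in the proof of Proposition~\ref{prop:purely infinite multiple vertices}) and of the "diagonal" length-two cycles across the two vertices in two different colours is what makes this work.
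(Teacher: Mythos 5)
Your proposal is correct and follows essentially the same route as the paper: invoke strong connectedness plus \cite[Theorem 3.1]{LLNSW} (and \cite[Theorem 7.3]{LLNSW}) together with $\rho(\Lambda)=(2L_1,\dots,2L_k)$, then reduce everything to computing $\mathcal{P}_\Lambda$, which in both arguments comes out as the even-sum sublattice of $\Z^k$ generated by the vectors $2e_i$ and $e_i+e_j$. The only difference is that you spell out the parity argument (the bipartite structure of the double cover forces cycles to have even total length, and the length-two cycles realise the generators) where the paper simply asserts that the generating set is ``not hard to see''; your extra detail is a faithful filling-in, not a different method.
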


\begin{proof} Fix $k\geq 2$ and positive integers $L_1,\dots,L_k$, and let $\Lambda$ be as specified.
Then $\Lambda$ is strongly connected, and we may apply Theorem 3.1 of \cite{LLNSW} to obtain the claimed von Neumann factors.

 The remaining task is to compute $\mathcal{P}_\Lambda$. By our construction of $\Lambda$ it is not hard to see that $\mathcal{P}_\Lambda$ is generated by ${m}\in \Z^k$ where either ${m}_i=2$ for a unique $i\in \{1,\dots,k\}$ while ${m}_l=0$ at $l\neq i$, or ${m}_i={m}_j=1$  for some $i\neq j$ in $\{1,\dots,k\}$  and ${m}_l=0$ for $l\notin \{i,j\}$. If $L_1=L_2=\dots=L_k=L$, then $\rho(\Lambda)^{{m}}=(2L)^{\sum_{i=1}^k m_i}$ with ${m}\in \mathcal{P}_\Lambda$, and because $\sum_{i=1}^k m_i\in 2\Z$, the required type is attained as $\lambda=(2L)^{-2}$.

\end{proof}

It was pointed out in \cite[Remark 7.6]{LLNSW} that the  type of the von Neumann factors arising from extremal KMS$_1$ states depends only on the skeleton of the $k$-graph, and not on its factorisation rules. In our examples in Corollary~\ref{cor:von Neumann factors}, this means that the type of the von Neumann factor depends only on the complex $\tilde{P}$ built up as a 2-cover of the one-vertex complex $(\mathcal{T}_{2L_1}\times \dots\times\mathcal{T}_{2L_k})\backslash(\mathbb{F}_{1}\times \cdots \times \mathbb{F}_{k})$.

\subsection{Spectral theory of $k$-graphs}\label{sec:spectral theory}

Alon and Boppana  prove that asymptotically in families of finite $(q+1)$-regular graphs $X_n$ with diameter tending to $\infty$ the largest absolute value of a non-trivial eigenvalue $\lambda(X_n)$ of the adjacency operator $A_{X_n}$ has limes inferior $
\varliminf_{n \to \infty} \lambda(X_n) \geq 2\sqrt{q}.$

 Now, instead of graphs we may consider cube complexes covered by products of trees $\rm{T}_1\times \dots \times\rm{T}_k$, such that $\rm{T}_i$ has valency $q_i$ and look at adjacency operators $\mathcal{A}_i$ in direction $i$ corresponding to an individual tree $\rm{T}_i$.

\begin{definition}
Let $X$ be a finite $k$-cube complex that  has constant valency $q_i+1$ in all directions $i=1,\dots,k$. Then $X$ is a \textbf{cubical Ramanujan complex}, if for each $i \in \{1, \ldots, k\}$, the eigenvalues $\lambda$ of $\mathcal{A}_i$ either satisfy the equality $\lambda = \pm(q_i+1)$ or the bound
\[
\lambda \leq 2\sqrt{q_i}.
\]
\end{definition}

Each such complex yields a $k$-graph $\Delta$  such that $\rho(\Delta)=(q_1+1, q_2+1,...,q_k+1)$.

There are explicit constructions of Ramanujan cube complexes for several infinite families
in \cite{RSV}. We consider next the complexes from \cite{RSV}, corresponding to congruence subgroups of arithmetic lattices.
We reformulate some results of \cite{RSV} in the light of the present paper.

\begin{theorem}(Cf. \cite[Section 6]{RSV})\label{thm:infinite series complexes same valency}
For $p$ a prime, $l$ a positive integer, and $N\geq 2$, there are infinitely many $k$-cube complexes with $N$ vertices covered by products of $k$ trees, where $k\leq p-1$ and each tree is of valency $p^l+1$, satisfying optimal spectral properties, namely with a spectral gap the interval $[2\sqrt{q}, q+1]$, for $q=p^l$.
\end{theorem}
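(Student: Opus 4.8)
The statement is, in essence, a reformulation in the language of $k$-graphs of the spectral results of Rungtanapirom, Stix and Vdovina, so the plan is to recall the arithmetic input from \cite[Section~6]{RSV} and thread it through the covering construction of Section~\ref{sec:new examples}.

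Fix a prime $p$, an integer $l\geq 1$, put $q=p^l$, and fix $k$ with $2\leq k\leq p-1$. By \cite[Section~6]{RSV} there is a $k$-cube group $\Gamma$ in the sense of Definition~\ref{defi:cubestructure} acting freely and simply transitively on a product $\mathcal{T}_{q+1}\times\cdots\times\mathcal{T}_{q+1}$ of $k$ copies of the $(q+1)$-regular tree; it is realised as a congruence lattice, hence is residually finite, and it carries a cofinal tower of finite-index normal congruence subgroups whose indices take infinitely many distinct values $N$ (for instance the pro-$p$ congruence tower yields normal subgroups of index $p^{m}$ for every $m\geq 1$, as already used in Example~\ref{ex:Ramanujan from order 25}). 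For each such normal subgroup $H$ of $\Gamma$ of finite index $N$, Proposition~\ref{prop:N vertex complex from symmetric group} produces a connected $k$-cube complex $\mathcal{X}_H$ with exactly $N$ vertices whose universal cover is again $\mathcal{T}_{q+1}\times\cdots\times\mathcal{T}_{q+1}$, and distinct indices $N$ give pairwise non-isomorphic complexes; this accounts for the infinitude of the family as $N$ ranges over the congruence indices.

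It remains to control the spectra of the adjacency operators $\mathcal{A}_i$ of $\mathcal{X}_H$, $i=1,\dots,k$, and two ingredients combine here. First, by the construction underlying Theorem~\ref{thm:N-vertex k graph} each geometric edge of $\mathcal{X}_H$ contributes a pair of mutually reversed arrows to the associated $k$-graph $\Lambda(\mathcal{X}_H)$, so the coordinate matrix $M_i$ of $\Lambda(\mathcal{X}_H)$ coincides with $\mathcal{A}_i$; in particular $\mathcal{A}_i$ is symmetric, its spectrum is real, and $\rho(\mathcal{A}_i)=q+1$ by Proposition~\ref{prop:spectral radii}. Second --- and this is the substantive point --- for these congruence complexes every eigenvalue $\lambda$ of $\mathcal{A}_i$ is either $\pm(q+1)$ (the eigenvalues attached, respectively, to the trivial and the sign character) or satisfies $|\lambda|\leq 2\sqrt{q}$. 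This is precisely the estimate of \cite[Section~6]{RSV}, which rests on the Ramanujan--Petersson conjecture for the automorphic representations of the relevant quaternion group over a function field (Deligne, and Drinfeld--Lafforgue), transported to the individual tree directions via the Jacquet--Langlands correspondence. Granting it, each $\mathcal{X}_H$ is a cubical Ramanujan complex and its spectral gap in every direction is the full interval $[2\sqrt{q},\,q+1]$, as claimed.

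The main obstacle is the Ramanujan bound $|\lambda|\leq 2\sqrt{q}$ itself: it is not a formal consequence of anything developed in the present paper and is the genuine arithmetic content, imported from \cite{RSV}. Everything else --- the realisation of $\Gamma$ as a $k$-cube group, its residual finiteness, the passage to $N$-vertex complexes via Proposition~\ref{prop:N vertex complex from symmetric group}, and the identification $M_i=\mathcal{A}_i$ that lets the bound be read off the coordinate matrices of $\Lambda(\mathcal{X}_H)$ --- is bookkeeping already assembled in the preceding sections.
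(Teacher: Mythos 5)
Your proposal is correct and follows essentially the same route as the paper: the paper's own proof is a three-line citation of \cite[Section 6]{RSV}, noting that the complexes are congruence quotients of arithmetic lattices with vertex count the order of $\PGL(2,p^l)$, and it likewise imports the Ramanujan bound wholesale from there. You simply make explicit the bookkeeping the paper leaves implicit (residual finiteness, the passage through Proposition~\ref{prop:N vertex complex from symmetric group}, and the identification $M_i=\mathcal{A}_i$ from Proposition~\ref{prop:spectral radii}), and you correctly identify that the only non-formal ingredient is the bound $|\lambda|\leq 2\sqrt{q}$ from \cite{RSV}.
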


\begin{proof}
Such $k$-cube complexes were constructed in \cite[Section 6]{RSV}. They correspond to congruence quotients
of arithmetic groups. The number of vertices of such complexes is given by the order of the group $\operatorname{PGL}(2,p^l)$.
\end{proof}

\begin{remark}
There are also non-residually finite complexes which have interesting $k$-graphs, although they do not necessarily exhibit the optimal spectral gap. Such complexes with one vertex were constructed in \cite[Section 5]{RSV}. Applying Lemma~\ref{lem:2-vertex 3-complex}, we get such complexes with $2$ vertices for all values $k\geq 1$.
\end{remark}

Now we extend the notion of the Ramanujan cube complexes to higher-rank graphs.
\begin{definition}\label{def:L regular}
We say that a coordinate matrix of a $k$-graph is $L$-regular for $L\in \N$, if the sum of all row entries is equal to $L$.
\end{definition}

\begin {definition}\label{def:ramanujan k graph definition}
Let $\Lambda$ be a $k$-graph with $L_i$-regular coordinate matrices $M_i$ having positive second eigenvalue $\lambda_i$, for  $i=1,\dots,k$. We say that the $k$-graph $\Lambda$ is Ramanujan if
\[
\lambda_i \leq 2\sqrt{L_i-1}\text{ for all }i=1,\dots, k.
\]

\end{definition}

\begin{theorem}\label{prop: Ramanujan higher rank graphs}
For each $k\geq 2$, there is an infinite family of Ramanujan k-graphs with $N\geq 2$ vertices. More precisely, $N$ is determined as the index of  congruence subgroups of the RSV-groups $\Gamma_{M,\delta}$ from subsection~\ref{subsection:RSV groups}.
\end{theorem}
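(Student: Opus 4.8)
The plan is to obtain the asserted family by feeding the Ramanujan cube complexes of Theorem~\ref{thm:infinite series complexes same valency} into the machinery of Theorem~\ref{thm:N-vertex k graph} and Proposition~\ref{prop:spectral radii}. Fix $k\geq 2$ and choose an odd prime $p$ with $k\leq p-1$ (for instance $p=3$ when $k=2$), so that the hypothesis of Theorem~\ref{thm:infinite series complexes same valency} is satisfied. For each positive integer $l$ and each index $N\geq 2$ realised by a congruence subgroup of the arithmetic RSV-group $\Gamma_{M,\delta}$ of valency vector $(p^l+1,\dots,p^l+1)$, that theorem supplies a $k$-cube complex $\mathcal{X}$ with $N$ vertices, whose universal cover is a product of $k$ trees each of constant valency $q_i+1=p^l+1$, and which is a cubical Ramanujan complex: for every $i\in\{1,\dots,k\}$ each eigenvalue $\lambda$ of the adjacency operator $\mathcal{A}_i$ on $L^2(\mathcal{X}_0)$ satisfies $\lambda=\pm(p^l+1)$ or $\lambda\leq 2\sqrt{p^l}$. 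Since the RSV-groups are arithmetic, hence residually finite, such congruence covers exist and their indices $N$ (one may take $N=\#\operatorname{PGL}(2,p^l)$, varying $l$) are unbounded, so the family produced below will be infinite.

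Next I would pass to the associated $k$-graph. By Theorem~\ref{thm:N-vertex k graph}, the $k$-graph $\Lambda(\mathcal{X})=\Lambda(\operatorname{DG}(\mathcal{X}))$ is strongly connected and is defined on the $N$ vertices of $\mathcal{X}$, with $N\geq 2$. By Proposition~\ref{prop:spectral radii} and its proof, the coordinate matrix $M_i$ of $\Lambda(\mathcal{X})$ in colour $i$ coincides with the adjacency operator $\mathcal{A}_i$ of $\mathcal{X}$; in particular $M_i$ is symmetric, so all its eigenvalues are real, and each of its rows sums to the valency $q_i+1=p^l+1$. Thus $M_i$ is $L_i$-regular in the sense of Definition~\ref{def:L regular} with $L_i=p^l+1$.

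It then remains to verify the bound in Definition~\ref{def:ramanujan k graph definition}. Since $M_i$ is a nonnegative symmetric $L_i$-regular matrix on an $N$-element set with $N\geq 2$, Perron--Frobenius identifies its largest eigenvalue as $L_i$; I would check that $M_i$ moreover has a positive eigenvalue strictly below $L_i$, so that the positive second eigenvalue $\lambda_i$ is well defined with $0<\lambda_i<L_i$. Because $\lambda_i$ is an eigenvalue of $\mathcal{A}_i$ distinct from $\pm(p^l+1)$, the cubical Ramanujan property of $\mathcal{X}$ forces $\lambda_i\leq 2\sqrt{p^l}=2\sqrt{L_i-1}$. Hence $\Lambda(\mathcal{X})$ is a Ramanujan $k$-graph, and letting $l$ (and $N$) range over their admissible values yields the claimed infinite family, with $N$ the index of a congruence subgroup of $\Gamma_{M,\delta}$.

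The step I expect to be the main obstacle is precisely the bookkeeping in the last paragraph: one must confirm that the ``second eigenvalue'' entering Definition~\ref{def:ramanujan k graph definition} really is a \emph{positive} eigenvalue of $\mathcal{A}_i$ different from the Perron value $p^l+1$ — ruling out the degenerate situation in which the only positive eigenvalue is the Perron one, and carefully separating off the bipartite eigenvalue $-(p^l+1)$, which the Ramanujan bound for the complex exempts — so that the inequality $\lambda\leq 2\sqrt{q_i}$ of Theorem~\ref{thm:infinite series complexes same valython} applies to it verbatim. Everything else is a direct transcription, via Proposition~\ref{prop:spectral radii}, of the spectral statement about the complex into a statement about the coordinate matrices of the $k$-graph.
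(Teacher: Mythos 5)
Your proposal is correct and follows essentially the same route as the paper, whose proof consists of a one-line citation of Theorem~\ref{thm:infinite series complexes same valency} in conjunction with Theorem~\ref{thm:N-vertex k graph}; your explicit use of Proposition~\ref{prop:spectral radii} to identify the coordinate matrices $M_i$ with the adjacency operators $\mathcal{A}_i$ is precisely the implicit link the paper relies on. The care you take over the ``positive second eigenvalue'' in Definition~\ref{def:ramanujan k graph definition} is sensible bookkeeping that the paper omits entirely.
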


\begin{proof}
This is a direct application of Theorem~\ref{thm:infinite series complexes same valency} in conjunction with Theorem~\ref{thm:N-vertex k graph}.  For a complex $\mathcal{X}$ with $N$ vertices, there is a $k$-graph $\Lambda(\mathcal{X})$ with $N$ vertices by an application of Theorem~\ref{thm:N-vertex k graph}.
\end{proof}

We note that both one-vertex cube complexes covered by product of $k$ trees and
one-vertex higher rank graphs are trivially Ramanujan, so we will require in addition the number of vertices to be greater than, for example, the maximum of $(L_i-1)^2, i=1,...,k$.

\begin{example}\label{ex:Ramanujan from order 25}
We now describe an explicit Ramanujan $3$-graph with  $25$ vertices in the above infinite family.
Let $p=5$ and consider the group $\Gamma_1$ from section \ref{subsection:RSV groups} acting on a product of three trees with valencies $(6,6,6)$. Let $P$  denote the one-vertex $3$-complex associated to $G$.
The existence of the claimed $3$-graph is assured by Proposition~\ref{prop:N vertex complex from symmetric group} and Theorem~\ref{thm:N-vertex k graph} because $\Gamma_1$ has a quotient $L$ of order $25$ (indeed, it has quotients of order $5^l$ for all $l\geq 1$). Let $\mathcal{X}$ denote the resulting complex with $25$ vertices, and let $\Lambda$ be its associated $3$-graph.

Certain subsets of generators of $\Gamma_1$ already generate a group of order $25$ in the cover, as may be verified using MAGMA. For example, the image $Q(a_1)$ in $S_{25}$ is the product of disjoint cycles
\[
Q(a_1)=(1,15,24,8,17)(2,11,25,9,18)(3,12,21,10,19)(4,13,22,6,20)(5,14,23,7,16).
\]
With the notation of Proposition~\ref{prop:N vertex complex from symmetric group}, we have isomorphisms of groups
\begin{align*}
L&\cong \langle Q(a_1), Q(a_5), Q(a_9) \rangle\\
L&\cong \langle Q(b_2), Q(b_6), Q(b_{10})\rangle\\
L&\cong \langle Q(c_3), Q(c_7), Q(c_{11})\rangle,
\end{align*}
thus all three groups in the right hand side are abstractly isomorphic to a {finite} group of order $25$. Let $K_1$, $K_2$ and $K_3$, respectively, denote the Cayley graphs of the finite group of order $25$ coming from the three preceding isomorphisms.

This means that while the presentation of the infinite group $\Gamma_1$  requires generators of all three colours $a_i, b_j, c_k$ (as $\Gamma_1$ is irreducible), in the presentation of the finite group of order $25$, generators of only one colour suffice. The finite cover is the complex $\mathcal{X}$, and fixing each colour yields the Cayley graph of a finite group of order $25$. In other words, each of the generating sets $A_1$, $A_2$ and $A_3$ give Cayley graphs in three different sets of generators (colours) of the \emph{same} finite group.

 The adjacency matrices $M_i$ of the Cayley graphs $K_i$, $1=1,2,3$, may be computed using MAGMA, using that $Q(G)$ acts as permutations in $S_{25}$. It turns out that $M_1, M_2, M_3$ are equal. As noted in the proof of Proposition~\ref{prop:spectral radii}, the adjacency matrices $M_1,M_2, M_3$ of the complex are also the adjacency matrices of the $3$-graph $\Lambda$. Each $M_i$ is $6$-regular in the sense of Definition~\ref{def:L regular}, for $i=1,2,3$, as may be seen from the concrete description of the matrices obtained with MAGMA. An application of Theorem~\ref{prop: Ramanujan higher rank graphs}
 gives  that $\Lambda$ is a Ramanujan
 $3$-graph, so the second largest eigenvalue $\lambda_i$ of $M_i$ is dominated by $2\sqrt{5}$, for $i=1,2,3$.

  In this example, the spectral gap is strictly in the optimal bound, namely, the second eigenvalue of $M_1$ is dominated by $3.24$, according to MAGMA computations. This bound is lower than the theoretically predicted $2\sqrt{5}$.

Also using MAGMA shows that the product $M_1M_2M_3$ is not a $(0,1)$-matrix, which distinguishes this example from
\cite{RS} and all papers inspired by it. For example, the diagonal entries in  $M_1M_2M_3$ are all equal to $12$. The remaining entries are $6,7$ or $15$.

\section{Matrices of the Cayley graph of an order $25$-group}
\[
 \begin{bmatrix}
0 &1 &1 &1 &1 &1 &1 &0 &0 &0 &0 &0 &0 &0 &0 &0 &0 &0 &0 &0 &0 &0 &0 &0 &0\\
1 &0 &1 &0 &0 &0 &1 &1 &1 &1 &0 &0 &0 &0 &0 &0 &0 &0 &0 &0 &0 &0 &0 &0 &0\\
1 &1 &0 &1 &0 &0 &0 &0 &1 &0 &1 &1 &0 &0 &0 &0 &0 &0 &0 &0 &0 &0 &0 &0 &0\\
1 &0 &1 &0 &1 &0 &0 &0 &0 &0 &0 &1 &1 &1 &0 &0 &0 &0 &0 &0 &0 &0 &0 &0 &0\\
1 &0 &0 &1 &0 &1 &0 &0 &0 &0 &0 &0 &0 &1 &1 &1 &0 &0 &0 &0 &0 &0 &0 &0 &0\\
1 &0 &0 &0 &1 &0 &1 &0 &0 &0 &0 &0 &0 &0 &0 &1 &1 &1 &0 &0 &0 &0 &0 &0 &0\\
1 &1 &0 &0 &0 &1 &0 &0 &0 &1 &0 &0 &0 &0 &0 &0 &0 &1 &1 &0 &0 &0 &0 &0 &0\\
0 &1 &0 &0 &0 &0 &0 &0 &1 &1 &0 &0 &0 &0 &1 &0 &0 &0 &0 &1 &1 &0 &0 &0 &0\\
0 &1 &1 &0 &0 &0 &0 &1 &0 &0 &1 &0 &0 &0 &0 &0 &0 &0 &0 &1 &0 &1 &0 &0 &0\\
0 &1 &0 &0 &0 &0 &1 &1 &0 &0 &0 &0 &0 &0 &0 &0 &0 &0 &1 &0 &1 &0 &1 &0 &0\\
0 &0 &1 &0 &0 &0 &0 &0 &1 &0 &0 &1 &0 &0 &0 &0 &1 &0 &0 &0 &0 &1 &0 &1 &0\\
0 &0 &1 &1 &0 &0 &0 &0 &0 &0 &1 &0 &1 &0 &0 &0 &0 &0 &0 &0 &0 &0 &1 &1 &0\\
0 &0 &0 &1 &0 &0 &0 &0 &0 &0 &0 &1 &0 &1 &0 &0 &0 &0 &1 &0 &0 &0 &1 &0 &1\\
0 &0 &0 &1 &1 &0 &0 &0 &0 &0 &0 &0 &1 &0 &1 &0 &0 &0 &0 &1 &0 &0 &0 &0 &1\\
0 &0 &0 &0 &1 &0 &0 &1 &0 &0 &0 &0 &0 &1 &0 &1 &0 &0 &0 &1 &1 &0 &0 &0 &0\\
0 &0 &0 &0 &1 &1 &0 &0 &0 &0 &0 &0 &0 &0 &1 &0 &1 &0 &0 &0 &1 &0 &0 &1 &0\\
0 &0 &0 &0 &0 &1 &0 &0 &0 &0 &1 &0 &0 &0 &0 &1 &0 &1 &0 &0 &0 &1 &0 &1 &0\\
0 &0 &0 &0 &0 &1 &1 &0 &0 &0 &0 &0 &0 &0 &0 &0 &1 &0 &1 &0 &0 &1 &0 &0 &1\\
0 &0 &0 &0 &0 &0 &1 &0 &0 &1 &0 &0 &1 &0 &0 &0 &0 &1 &0 &0 &0 &0 &1 &0 &1\\
0 &0 &0 &0 &0 &0 &0 &1 &1 &0 &0 &0 &0 &1 &1 &0 &0 &0 &0 &0 &0 &1 &0 &0 &1\\
0 &0 &0 &0 &0 &0 &0 &1 &0 &1 &0 &0 &0 &0 &1 &1 &0 &0 &0 &0 &0 &0 &1 &1 &0\\
0 &0 &0 &0 &0 &0 &0 &0 &1 &0 &1 &0 &0 &0 &0 &0 &1 &1 &0 &1 &0 &0 &0 &0 &1\\
0 &0 &0 &0 &0 &0 &0 &0 &0 &1 &0 &1 &1 &0 &0 &0 &0 &0 &1 &0 &1 &0 &0 &1 &0\\
0 &0 &0 &0 &0 &0 &0 &0 &0 &0 &1 &1 &0 &0 &0 &1 &1 &0 &0 &0 &1 &0 &1 &0 &]\\
0 &0 &0 &0 &0 &0 &0 &0 &0 &0 &0 &0 &1 &1 &0 &0 &0 &1 &1 &1 &0 &1 &0 &0 &0\\
\end{bmatrix}
\]

\end{example}

\end{document}